\newtheorem{theorem}{Theorem}[section]
\newtheorem{corollary}[theorem]{Corollary}
\newtheorem{lemma}{Lemma}[section]
\newtheorem{proposition}{Proposition}[section]
\theoremstyle{definition}
\newtheorem{definition}{Definition}[section]
\theoremstyle{definition}
\newtheorem{example}{Example}[section]
\begin{document}

\title{Where and When Orbits of Strongly Chaotic Systems Prefer to Go}

\author{M. Bolding and L. A. Bunimovich}

\address{School of Mathematics, Georgia Institute of Technology, Atlanta, GA, 30332-0160, USA}
\vspace{10pt}
\begin{indented}
\item[]E-mail: bunimovich@math.gatech.edu and mbolding3@gatech.edu
\end{indented}
\vspace{10pt}
\begin{indented}
\item[]\today
\end{indented}

\begin{abstract}
We prove that transport in the phase space of the "most strongly chaotic" dynamical systems has three different stages. Consider a finite Markov partition (coarse graining) $\xi$ of the phase space of such a system. In the first short times interval there is a hierarchy with respect to the values of the first passage probabilities for the elements of $\xi$ and therefore finite time predictions can be made about which element of the Markov partition trajectories will be most likely to hit first at a given moment. In the third long times interval, which goes to infinity, there is an opposite hierarchy of the first passage probabilities for the elements of $\xi$ and therefore again finite time predictions can be made. In the second intermediate times interval there is no hierarchy in the set of all elements of the Markov partition. We also obtain estimates on the length of the short times interval and show that its length is growing with refinement of the Markov partition which shows that practically only this interval should be taken into account in many cases. These results demonstrate that finite time predictions for the evolution of strongly chaotic dynamical systems are possible. In particular, one can predict that an orbit is more likely to first enter one subset of phase space than another at a given moment in time. Moreover, these results suggest an algorithm which accelerates the process of escape through "holes" in the phase space of dynamical systems with strongly chaotic behavior.\\

\noindent Mathematics Subject Classification: Primary: 37A50; Secondary: 37A60
\end{abstract}

\section{Introduction}

This paper belongs to a new direction in dynamical systems theory, which originated in the theory of open dynamical systems \cite{PY,DY}. A standard set up in open systems theory includes a "hole" $H$ which is a positive measure subset of the phase space $M$ of some dynamical system generated by a map $T$ preserving a Borel probability measure $\mu$.

When a trajectory hits the hole $H$ it escapes and is not considered any more. In this setting, it is natural to assume that the map $T$ is ergodic. Otherwise, instead of a given hole one should consider its intersections with ergodic components and take each ergodic component as the phase space of an open system. In what follows we always assume that $T$ is ergodic, and therefore almost all trajectories will eventually escape. Let $P_s(n)$ be the probability that a trajectory does not escape until time $n$ (computed with respect to $\mu$), which is called the survival probability.  It is natural to ask what the decay rate of $P_s(n)$ is. This decay rate $\rho(H) = \lim_{n\rightarrow\infty}-\frac{1}{n}\log P_s(n)$ is called the escape rate.

Traditionally, the theory of open dynamical systems dealt with small holes \cite{PY,DY}. Therefore such open systems could be (and were) treated as small perturbations of the corresponding closed systems \cite{DY,KL}. In fact, the first paper about open systems \cite{PY} limited its scope to the dynamics (of billiards) with a \textit{small} hole (in the billiard table). Hence the interest was always related to the limit when the size of the hole tends to zero \cite{PY,DY}, besides some special examples when everything is easily computable (some examples can be found in the beautiful review \cite{DY}).

At the same time, a seemingly natural question on how the process of escape depends on the position of the hole in phase space had been overlooked. The second author raised this question inspired by remarkable experiments with atomic billiards \cite{FKCD,MHCR}. Moreover, this question addresses finite, rather than infinitesimal, holes. Indeed, in real systems "holes" are finite, and it is a challenge to the modern theory of dynamical systems to handle this situation.

Originally this question was formulated as follows: "How does escape rate depend on the position of the hole?" \cite{BY}. So it referred to the escape rate  $\rho(H)$. Observe that the definition of the escape rate involves a limit as time goes to infinity. Clearly this question makes sense if the measure $\mu$ is invariant, ergodic, and absolutely continuous. Indeed, if e.g. $\mu$ is sitting on some subset (hole) then the escape rate through this hole is infinite, while it could assume various finite values for subsets not belonging to this hole. (In fact, it was shown that in general the escape rate can even behave locally as a devil staircase \cite{DW}). 

A standard and natural approach to attacking a new type of a problem or question is to consider a class of systems for which an answer seems possible. Such class of dynamical systems with the strongest chaotic properties was studied in \cite{BY,B}. These dynamical systems are ergodic with respect to an absolutely continuous measure $\mu$ and have such finite Markov partition that the corresponding symbolic representation is a full Bernoulli shift. Moreover, each element in this partition has the same measure, and therefore all entries of the transition probabilities matrix are also equal each other.(In what follows such Markov partition will be called a basic Markov partition). 
Hence the measure of any element of the basic Markov partition and all transition probabilities are equal to $1/q$, where $q$ is a number of elements in the Markov partition $\xi$. Therefore the evolution of such dynamical systems is equivalent to the evolution of independent identically distributed (IID) random variables. All values of such random variables have the same probability. A typical example of random trials generating such IIDs is the throwing of a fair dice (with $q$ faces). Therefore this class of systems was called in \cite{B} fair-dice-like (FDL) systems. 
The FDL-systems form a narrow subclass with the most uniform hyperbolicity in the class of chaotic (hyperbolic) dynamical systems \cite{BP}.
One of the simplest examples of a FDL-system is the doubling map $x \rightarrow 2x$(mod1) of the unit interval. The Lebesgue measure (length) is invariant. Consider the Markov partition into intervals $(0,1/2)$ and $(1/2,1)$. Then $q=2$ and the measure of each interval and all four transition probabilities equal 1/2.

It is a standard approach in the theory of chaotic dynamical systems to pose questions about what happens in the limit when time goes to infinity, or after averaging some observables over an infinite time interval. The evolution of strongly chaotic dynamical systems is in many respects similar to the evolution of random (stochastic) processes. Therefore in the metric (ergodic) theory of dynamical systems, the main problems are about mixing (i.e. vanishing of correlations in the limit when time goes to infinity), a rate of mixing (correlation decay), and the central limit theorem (CLT) and other limit theorems which again involve a limit when time goes to infinity.

Likewise, all major characteristics of chaotic dynamical systems involve either taking a limit when time goes to infinity or averaging over an infinite time interval. Indeed, look at the definitions of Lyapunov exponents and various entropies, among others. Observe that the definition of the escape rate also involves a limit when time goes to infinity. Analogously in Nonequilibriun Statistical Mechanics, definitions of transport coefficients involve taking a limit when time goes to infinity and averaging over an infinite time interval.  Therefore the main result of \cite{BY} came as a complete surprise. Namely, it was proven that for some subclass of FDL systems not only are the escape rates generally different for different subsets (considered to be holes), but also the relations between corresponding survival probabilities can be established for \textit{all moments of time}. Namely, either all the survival probabilities for two holes of the same measure are equal or they coincide only on a short time interval after which all survival probabilities for one hole exceed the ones for another. Therefore there exists a finite moment in time when the process of escape through one hole becomes more intense than for another hole (and this moment is exactly and easily computable for FDL systems).

This result is of an essentially different nature than the ones we are used to in dynamics. Indeed, it deals with finite times rather than with the infinite time limit. Why are some specific finite moments in time important for dynamics (more precisely in this case, for transport in the phase space)? These FDL systems are the most uniformly hyperbolic (chaotic). Why is their dynamics not uniform?  This result was generalized to the entire class of FDL systems in \cite{B}. The paper \cite{BB} contains generalizations for Markov chains. Topological analogs of these results were proved in \cite{AB} where the focus was on applications to computer simulations of real systems and networks. In \cite{AB} long time \textit{estimates} were obtained for survival and first hitting probabilities with respect to (not necessarily invariant) Lebesgue measure, which is typically used as an initial distribution in numerical experiments.

The main result of \cite{BY} gives hope that it might be possible to develop a rigorous theory regarding the finite time evolution of strongly chaotic dynamical systems. The first step was to realize that one can use the ideas developed in open systems theory to study transport in the phase space of closed dynamical systems. Indeed, one can make different holes in the phase space of a closed dynamical systems and "look" through these holes at the dynamics of a closed system (like one looks inside through windows). This really represents a turnabout, because here we use open systems built from a closed system to study the dynamics of the original closed system.  This is totally different from a standard approach in the theory of open systems which does the opposite. 

The idea of "spying" on closed systems by making holes (windows) proved to be efficient and allowed to obtain various new formulas and results useful for applications (see e.g. \cite{BD,D}). It is notable (although natural) that the main impact in a number of follow up papers was made so far not by the main result of \cite{BY} but by one of its byproducts dealing with infinitesimally small holes. Consider a sequence of shrinking holes converging to some point in the phase space. Again one can place this sequence in neighborhoods of different points pursuing an answer to the same question regarding how such placement impacts the escape rate. This was another non-standard question raised in \cite{BY}. Indeed, the escape rate through a point obviously equals zero because a measure in the open systems theory is always assumed to be absolutely continuous. It was shown in \cite{BY} that by normalizing the escape rates of holes which shrink to a point by measures of these holes, one gets a limiting value. Moreover this value varies over different points. Therefore even local escape depends upon the position (and other dynamical characteristics) of the point in phase space. This result about small holes (as well as the results for large holes) was presented at the workshop in the Boltzmann institute in the summer 2008. Immediately \cite{KL,FP} it was generalized to much larger classes by leading experts in open dynamical systems. Now it is an active area because relevant perturbation techniques were already well developed.

But what to do about this new, unexpected, and strange result on large holes? A natural approach would be to generalize the main result of \cite{BY,B} on survival probabilities to a larger class of chaotic hyperbolic systems. It is always the case that when something new is found for a narrow class of systems, the results are generalized for larger and larger classes.  However, the main result of \cite{BY} gave hope that something more ambitious would be possible, namely finite time predictions for strongly chaotic dynamical systems. Observe that this main result \cite{BY}, although giving some exact values in time when the survival probabilities for different subsets of the phase space split, does not allow for finite time predictions for evolution of a system. Indeed, by comparing two subsets of the phase space of a FDL system we can only say that it is more probable (over the \textit{infinite} time interval after a certain moment) that trajectories would enter one subset compared to another. So this result does not allow us to make finite time predictions. Therefore leading experts in open systems (as well as other mathematicians) did not move into this new area of research because it was not clear what to do next.

It is a main goal of our paper to present the first rigorous results in the mathematical theory of the finite time dynamics (FTD) of (strongly) chaotic systems. It is the next needed step in this new area. In fact these new rigorous results allow one to make finite time predictions for transport in the phase space of fair-dice-like dynamical systems.  Actually, predictions can even be made for the next moment of time, i.e. for an "immediate future". (Some results of the present paper were announced in \cite{BV}. Here proofs are given of those claims as well as of several other statements). 

Recall that the $m$th order refinement of a partition $\xi$ is the partition generated by intersection of all preimages of $\xi$ from orders $0$ to $m-1$, i.e the partition generated by the sets $C_\xi^{(j_0)} \cap T^{-1} C_\xi^{(j_1)} \cap \dots \cap T^{-m+1}C_\xi^{(j_{m-1})}$ where $C_\xi^{(k)}$ is some element of the partition $\xi$. It is a well known fact that any refinement of a Markov partition is also a Markov partition.

Our main result says the following. Let $A$ and $B$ be elements of some refinement of a basic Markov partition of a FDL system. Then either the infinite sequences of their first hitting (first passage) probabilities $P_h(A,n)$ and $P_h(B,n)$ coincide, or the entire infinite interval of (positive) times gets partitioned into three subintervals. In the first very short interval the first hitting probabilities coincide. Then in the second interval (of finite length) the first hitting probabilities for $A$ exceed those for $B$.  In the third (infinite) interval the opposite inequalities hold. 
If $A$ and $B$ are elements of different refinements of the Markov partition of a FDL system then the first (short) time interval disappears because $A$ and $B$ have different measures and only the second and the third intervals remain where there are hierarchies.  

To understand the following results better, imagine that for each element we construct a piece-wise linear curve connecting the values of the corresponding first hitting probabilities $P_h(A,n)$ and $P_h(A, n+1)$ for all $n>0$.  It follows from ergodicity that  $\sum _{n=1}^\infty P_h(H,n)$=1 for any set $H$ of positive measure. Therefore if the first hitting probability curves for two subsets do not coincide then they must intersect. The main result establishes that (besides possibly a very short initial interval of coincidence) \textit{there is only one point of intersection of these curves}.

These results are much stronger than those found in \cite{BY,B}, which can be easily deduced from the results of the present paper. First of all, the following formula holds
\begin{align*}
P_s(A,n) = \sum_{m=n+1}^\infty P_h(A,m).
\end{align*}
Therefore clearly the results on comparison of elements (first hitting probabilities) of infinite series obtained in this paper are stronger than the results on comparison of the sums of such series (survival probabilities) \cite{BY,B}.

Consider now all elements of some refinement of the basic Markov partition. (All these elements have of course the same measures.) Then the main result of the present paper implies that the evolution of any FDL system consists of three stages. At the first stage, which we refer to as the short times interval, there is a hierarchy of the first hitting probabilities for different elements. At the second stage all these curves intersect. After the last such pairwise intersection, the third stage emerges which occupies what we call the long times interval, having infinite length. The intermediate interval between the first and the last intersections of the first hitting probability curves we refer to as the intermediate times interval. Observe that a standard approach in dynamical systems theory would consider only the infinite time limit whereas this partition into three time intervals is something completely new.

A crucial question about the possible practical applications of our results is what happens to the lengths of the finite short times interval and of the intermediate interval when we consider higher order refinements of the basic Markov partition. Practically speaking, it means that we analyze transport in the phase space at finer and finer scales. We prove that the length of the short times interval increases at least linearly with the order of refinement of the Markov partition. In fact numerical experiments, which we also present below, show that this growth is actually exponential. However, a principal fact is that the length of the short times interval tends to infinity when we consider transport in the phase space at finer and finer scales. Indeed, observe that the hierarchy of the first hitting probabilities in the short times interval is \textit{opposite} to the hierarchy in the (infinite) long times interval. Therefore the traditional approach to the studies of transport in the phase space of chaotic systems, which is based on time-asymptotic analysis, seems to be not quite appropriate for practical use. Indeed any analysis (via experiments and observations) of real systems lasts only a finite time. Therefore it essentially belongs to our short times interval where the dynamics/transport has quite different characteristics than in the infinite long times interval. Hence the strategy for analyzing experimental data should perhaps be reconsidered. 

It is worthwhile to mention that numerical experiments with dispersing billiards confirmed the existence of different stages in the transport of chaotic systems \cite{BV}. It should be noted however that these numerical computations can not confirm that the corresponding curves of the first hitting probabilities have only one (or even a finite number) of intersections. In fact we believe that there are more intersections than just one for billiards studied numerically in \cite{BV}. Nevertheless, these numerical simulations suggest that there are rather long intervals with alternating hierarchies of the first hitting probabilities curves. 

As a byproduct, our results allow one to determine the best base for towers (see definition in the next section) built for an FDL system. When used as the base for a tower, a choice of any element from a Markov partition of a FDL system ensures exponential decay of the first recurrence probabilities to this base. Our results allow one to chose base(s) with the fastest decay of the first recurrence probabilities. It gives hope that the theory of dynamical systems will be able to be developed to such stage when it would be possible to find numerical estimates of various exponential rates of convergence rather than dealing only with qualitative statements like that a certain rate is exponential.

 It is naive to expect that the most broad and important class of nonuniformly hyperbolic dynamical systems \cite{BP} will have the same properties as the FDL-systems. However, numerical experiments with dispersing billiards \cite{BV} demonstrate that there exist time intervals of finite lengths with hierarchies somewhat similar to the ones in the FDL-systems. Surely, one should expect that for general nonuniformly hyperbolic systems \cite{BP} there will be more intervals with alternating hierarchies of the first hitting probabilities than for FDL systems.  Actually, this was shown by some numerical results in \cite{BV}. To understand better what is going on, it is necessary to analyze some class of hyperbolic dynamical systems with distortion, i.e. with less uniform hyperbolicity than in the FDL-systems.

We also present in this paper an algorithm which allows accelerate escape from the phase spaces of strongly chaotic dynamical systems. This algorithm readily follows from the results of the present paper. This algorithm can be applied to real systems, particularly to atomic billiards \cite{FKCD,MHCR}. In a nutshell, it says the following: make a hole in a certain (optimal!) subset of the phase space and keep it open till a certain moment of time when this subset ceases to be optimal. Then close ("patch") this hole, and make a new hole in another subset which has become an optimal sink at this moment of time. The process continues by switching to other holes as they become optimal.

The structure of the paper is as follows. In the next section we provide necessary definitions and formulate main results. We also present there an algorithm which allows accelerate the process of escape from a FDL-system. In section 3 we introduce some notations and present several preliminary results. Section 4 provides a proof of the main results when considering subsets of phase space having the same measure, under a technical assumption whose proof is relegated to the appendix.  Also included in section 4 is a simple example demonstrating why just two time intervals with different hierarchies of the first hitting probabilities may exist. This main result is surprising and therefore such demonstration is helpful for understanding (and breaking up) long and formal proofs. Section 5 contains the proof of the main results for subsets of the phase space with unequal measures. In section 6 we briefly present a few results of computer simulations for the length of the short times interval. The last section 7 contains some concluding remarks.

\section{Definitions and Main Results}
Let $T: M \rightarrow M$ be a uniformly hyperbolic dynamical system preserving Borel probability measure $\mu$.  The following definition \cite{B} singles out a class of dynamical systems analogous to the independent, identically distributed (IID) random variables with uniform invariant distributions on their (finite!) state spaces.  Classical examples of such stochastic systems are fair coins and dices, hence the corresponding dynamical systems are called fair dice like (FDL) \cite{B}.

\begin{definition} \label{def FDL}
A uniformly hyperbolic dynamical system preserving Borel probability measure $\mu$ is called fair dice like or FDL if there exists a finite Markov partition $\xi$ of its phase space $M$ such that for any integers $m$ and $j_i$, $1 \leq j_i \leq q$ one has $\mu \left( C_\xi^{(j_0)} \cap T^{-1} C_\xi^{(j_1)} \cap \dots \cap T^{-m+1}C_\xi^{(j_{m-1})} \right) = \frac{1}{q^m}$ where $q$ is the number of elements in the partition $\xi$ and $C_\xi^{(j)}$ is element number $j$ of $\xi$.
\end{definition}
Therefore a FDL-system is a full Bernoulli shift with equal probabilities $\frac{1}{q}$ of states.
In what follows we will call a Markov partition in the definition of the FDL systems a basic Markov partition of the FDL-system under consideration.  We will be interested in such partitions of the phase space of FDL systems which are refinements of the basic Markov partition $\xi$ featured in the definition of FDL systems. We will say that the kth order refinement of the partition $\xi$ is the partition generated by the intersection of all elements of the partitions $T^{-i}\xi$ where $i$ varies between 1 and $k-1$. It is easy to see that this refinement has $q^k$ elements coded by the words of the length $k$. Clearly any refinement of a Markov partition is also a Markov partition. Therefore in what follows we often refer to refinements of the basic Markov partition as to Markov partitions.

\begin{example}
Let $Tx=qx$ (mod 1) where $x \in M = [0,1]$ and $q \geq 2$ is an integer, with $\mu$ the Lebesgue measure. The corresponding basic Markov partition is the one into equal intervals $[\frac{i}{q},\frac{i+1}{q}]$, $i=0, 1, \dots, q-1$.
\end{example}

\begin{example}

Consider the tent map $Tx$ = $2x$ if $0<x<1/2$ and $Tx$ = $2(1-x)$ if $1/2<x<1$ of the unit interval with Lebesgue measure. To see that it is a FDL-system take the same Markov partition into the intervals (0,1/2) and (1/2,1) as for the doubling map in the introduction.    
\end{example}

\begin{example}

Take now the von Neumann-Ulam map of the unit interval where $Tx=4x(1-x)$. This map preserves the measure $\mu$ with density $\frac{1}{\pi  \sqrt{x(1-x)}}$. The von Neumann-Ulam map is metrically conjugate to the tent map via the transformation $y=\sin^2(\frac{\pi x}{2})$. To see that it is also FDL just take the same Markov partition as for the tent and doubling maps.
\end{example}

\begin{example}
To see that FDL-systems could be high-dimensional as well consider the baker's map of the unit square, where $(x,y)\rightarrow(2x, y/2)$ if $0<x<1/2$ or $(x,y)\rightarrow(2x-1, y/2 + 1/2)$ if $1/2<x<1$. This map preserves Lebesgue measure (area). To see that baker's map is a FDL-system just take the Markov partition of the unit square into the strips $0<x<1/2$ and $1/2<x<1$
.     
\end{example}

Let $\Omega$ denote a finite alphabet of size $q \geq 2$. We will call any finite sequence composed of characters from the alphabet $\Omega$ a string or a word. For convenience both names will be used in what follows without ambiguity. For a fixed string $w = w_k \dots w_1$, $w_i \in \Omega$ let $a_w(n)$ denote the number of strings of length $n$ which do not contain $w$ as a substring of consecutive characters.  The survival probability for a subset of phase space coded by the string $w$ is then $\hat{P}_w(n) = \frac{a_w(n)}{q^n}$.

Denote $h_w(n) = q a_w(n-1)-a_w(n)$ for $n \geq k$.  It is easy to see that $h_w(n)$ equals the number of strings which contain the word $w$ as their last $k$ characters and do not have $w$ as a substring of $k$ consecutive characters in any other place.  Therefore $\frac{h_w(n+k)}{q^{n+k}}$ is the first hitting probability $P_w(n)$ of the word $w$ at the moment $n$.  

J. Conway suggested the notion of autocorrelation of strings (see \cite{GO}).  Consider any finite alphabet and denote by $|w|$ the length of the word $w$.  Let $|w|=k$.  Then the autocorrelation cor$(w)$ of the string $w$ is a binary sequence $b_k b_{k-1} \dots b_1$ where $b_i=1$ if $w_j = w_{k-i+j}$ for $j=1, \dots, i$, that is, if there is an overlap of size $i$ between the word $w$ and its shift to the right on $k-i$ characters.  For example, suppose that $w=10100101$ in a two symbols (characters) 0 and 1 alphabet .  Then cor$(w)=10000101$.

We can compare (values of) autocorrelations by considering them as numbers written in base 2. For instance the sequence 101 becomes the number 5.
Observe that the autocorrelation of a word $w$ is completely defined by its internal periodicities. Indeed all digits which equal to one in cor$(w)$ are at the positions corresponding to these internal periods \cite{GO,BY}.

Let $k=|w|$, $k'=|w'|$, and denote $h_w(n)=h(n)$ and $h_{w'}(n) = h'(n)$.  We define
\begin{align*}
s_w = \textrm{max}_{1 \leq j \leq k-1} \{ j: b_j=1 \}
\end{align*}
whenever this maximum exists and we let $s_w=0$ otherwise.  We will always denote $s=s_w$ and $s'=s_{w'}$.  In what follows we will generally denote any quantity or function that depends on $w'$ by a superscript $'$.

It follows from ergodicity that $\sum_{n=1}^\infty P_w(n) = 1 = \sum_{n=1}^\infty P_{w'}(n)$.  Therefore if $P_w(m)-P_{w'}(m) < 0$ for at least one $m$, there must be at least one $n$ for which $P_w(n)-P_{w'}(n) > 0$. Theorems 2.1 and 2.2 establish a surprising fact that for the FDL-systems there is only one $n$ for which the quantity $P_w(n)-P_{w'}(n)$ changes from being negative or zero to positive.

\begin{theorem}
Consider an FDL-system. Let $w$ and $w'$ be words coding some elements of (possibly different) refinements of the basic Markov partition such that cor$(w)>\textrm{cor}(w')$.  Then there exists an $N > k$ such that $h(n)-h'(n) \le 0$ for $n<N$, and $h(n)-h'(n)>0$ for $n>N$.
\end{theorem}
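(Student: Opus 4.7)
The plan is to work at the level of generating functions. For a word $w$ of length $k$ over the size-$q$ alphabet, the classical Goulden--Jackson / Guibas--Odlyzko counting gives
\begin{equation*}
H_w(z) \;:=\; \sum_{n \ge 0} h_w(n)\, z^n \;=\; \frac{z^k}{D_w(z)}, \qquad D_w(z) \;:=\; z^k + (1 - qz)\, C_w(z),
\end{equation*}
where $C_w(z) = \sum_{1 \le i \le k,\, b_i = 1} z^{k-i}$ is the Conway correlation polynomial. Each $D_w$ has a unique smallest positive real root $r_w \in (0, 1/q)$ that dominates the others in modulus, so asymptotically $h_w(n) \sim c_w\, r_w^{-n}$ with $c_w > 0$. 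The hypothesis $\mathrm{cor}(w) > \mathrm{cor}(w')$ should force the ordering $r_w > r_{w'}$, immediately yielding $h(n) - h'(n) > 0$ for all sufficiently large $n$.

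Consider first the equal-length case $|w| = |w'| = k$ (the setting of Section 4). Subtracting the two identities and using that the $z^k$ parts of $D_w$ and $D_{w'}$ cancel gives
\begin{equation*}
H_w(z) - H_{w'}(z) \;=\; \frac{z^k (1 - qz)\bigl(C_{w'}(z) - C_w(z)\bigr)}{D_w(z)\, D_{w'}(z)}.
\end{equation*}
Let $j^* < k$ be the largest index at which $b_j \neq b'_j$; the hypothesis forces $b_{j^*} = 1$, $b'_{j^*} = 0$, so the lowest-order nonzero coefficient of $C_{w'} - C_w$ is $-1$ at $z^{\,k - j^*}$. Reading off the first nonvanishing term of the numerator (noting $D_w(0) D_{w'}(0) = 1$), this makes $d(n) := h(n) - h'(n)$ vanish for $n < 2k - j^*$ and be strictly negative at $n = 2k - j^*$. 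Combined with the asymptotic positivity from the previous paragraph, $d$ changes sign at least once, and the theorem is the claim that the sign change is unique, with $N \ge 2k - j^* > k$.

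The main obstacle is ruling out additional sign changes of $d(n)$. Since the coefficients of $D_w$ are not sign-definite and the higher-order coefficients of $C_{w'} - C_w$ carry either sign, the linear recurrence satisfied by $d$ does not preserve signs directly. My strategy is to introduce a positivity-preserving reformulation of the recurrence: either a tilted sequence $\widetilde{d}(n) := r_w^{\,n} d(n)$, whose limit is a nonzero constant, or a convolution $d * \varphi$ against a carefully chosen kernel $\varphi \ge 0$ for which the transformed sequence satisfies a recurrence with nonnegative coefficients past a fixed threshold. The technical lemma relegated to the appendix would then amount to verifying that, after such a change of variables, once the transformed difference becomes positive it cannot return to being nonpositive. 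The unequal-length case $|w| \neq |w'|$ (Section 5) fits the same framework applied to $H_w - H_{w'}$ with common denominator $D_w D_{w'}$; the initial interval on which $d \equiv 0$ disappears because $h_w$ and $h_{w'}$ are supported on different initial ranges $n \ge k$ and $n \ge k'$, so the first stage of equality collapses and only the negative and positive stages remain.
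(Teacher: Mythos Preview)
Your proposal correctly identifies the generating-function framework and the endpoints: $d(n) = h(n) - h'(n)$ is eventually positive (via the dominant root) and initially nonpositive (via the first nonzero coefficient of the numerator). But the theorem is precisely the assertion that $d$ changes sign \emph{only once}, and this you do not prove. You acknowledge it as ``the main obstacle'' and then propose to tilt by $r_w^{\,n}$ or to convolve against an unspecified nonnegative kernel, but neither step is carried out, and it is not clear that either can be made to work: the linear recurrence satisfied by $d$ has coefficients of both signs, and a tilting that makes $\widetilde d(n) \to c > 0$ says nothing about sign preservation on finite ranges. This is not a technicality to be ``relegated to the appendix''; it is the entire content of the theorem.

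There is a secondary gap. You write that $\mathrm{cor}(w) > \mathrm{cor}(w')$ ``should force'' $r_w > r_{w'}$. For equal lengths this is not obvious: the ordering of correlations as base-$2$ integers does not give coefficientwise domination of $C_w$ over $C_{w'}$, so a direct comparison of $D_w$ and $D_{w'}$ on the real axis is unavailable. The paper in fact remarks that the closely related Eriksson conjecture is a \emph{corollary} of this theorem, so you are close to assuming part of what is to be proved.

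The paper's argument is quite different and stays entirely at the level of the integer sequences. The single crossing is obtained by a self-propagating recurrence inequality: one shows (Corollary~\ref{c1}) that if $\Delta(n-t) \ge (q-1)\,\Delta(n-t-1)$ holds for $1 \le t \le k-1$ and $n \ge 3k$, then the same inequality holds at step $n$; hence once it holds on a window of length $k$ with $\Delta > 0$ at the right endpoint, $\Delta$ stays positive forever. The base of this induction is supplied by Lemma~\ref{l3}, which shows that at the first $N$ with $\Delta(N) > 0$ one has $H(n) - H'(n) \le 0$ on $N \le n \le N+k$, hence $\Delta(n) \ge q\,\Delta(n-1)$ there. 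That in turn rests on Lemma~\ref{l2}, a delicate case analysis (split according to whether $s = s'$, and to the structure of the set $I$ of ``primitive'' overlap indices) that pushes the first possible crossing out to $n \ge 3k - s$ or $n \ge 4k$, and on the combinatorial Propositions~\ref{p1}--\ref{p2} and Lemma~\ref{l1} controlling $H(n)$. The unequal-length case is handled separately and more directly (Lemmas~\ref{5.1}--\ref{5.3}) via the bounds $H(n) \le h(n-k)$ and $H'(n) \ge (q-1)\,h'(n-k'-1)$. None of this structure is visible from the generating-function identity alone; if you want to make your approach work, you would need to exhibit an explicit kernel and verify the positivity claim, which would most likely amount to reproving these same inequalities in disguise.
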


Observe that $2^{k-1} \leq \textrm{cor}(w) \leq 2^k-1$.  Therefore the assumption cor$(w)>\textrm{cor}(w')$ implies $k \ge k'$. (We also note that Eriksson's conjecture \cite {E,M} in discrete mathematics is a simple corollary of Theorem 2.1).

One may naturally expect that two discrete curves of survival probabilities intersect infinitely many times unless they coincide. (One gets the simplest example of two words of the same length with identical curves of the first hitting probabilities when all zeros in the first word are substituted by ones and all ones by zeros).

\begin{proposition}
Consider a FDL-system.If two words have the same length and equal autocorrelations then all first hitting probabilities for subsets coded by these words are equal to each other at any moment of time.
\end{proposition}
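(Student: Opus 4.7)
The plan is to reduce to a combinatorial identity, then invoke the classical correlation generating function of Guibas and Odlyzko. By the definitions in the excerpt, $h_w(n) = q\,a_w(n-1) - a_w(n)$ and $P_w(n) = h_w(n+k)/q^{n+k}$, so it suffices to prove $a_w(n) = a_{w'}(n)$ for every $n \ge 0$. The generating function of the sequence $a_w$ admits a closed form depending on $w$ only through its length $k$ and its autocorrelation polynomial $C_w(z) := \sum_{i=1}^{k} b_i z^{k-i}$, namely
\begin{align*}
\sum_{n \ge 0} a_w(n)\, z^n \; = \; \frac{C_w(z)}{z^k + (1-qz)\,C_w(z)}.
\end{align*}
Once this identity is in hand, the hypotheses $|w|=|w'|$ and $\textrm{cor}(w) = \textrm{cor}(w')$ give $C_w = C_{w'}$, so the two generating functions agree coefficient-by-coefficient, yielding $a_w(n) = a_{w'}(n)$ for every $n$ and hence $P_w(n) = P_{w'}(n)$.

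To derive the closed form, I would count in two ways the $q^n$ words of length $n+k$ whose final $k$ characters equal $w$, classifying each by the end-position $m \in \{k,\ldots,n+k\}$ of its first occurrence of $w$. If $m \le n$, the first occurrence is separated from the forced tail copy by $n-m$ free characters, contributing $h_w(m)\,q^{n-m}$. If $m > n$, the first occurrence overlaps the tail copy in exactly $m-n$ positions, which is consistent precisely when $b_{m-n}=1$; the remainder of the word is then forced by the tail, so the contribution is $b_{m-n}\,h_w(m)$. Equating the total to $q^n$ gives
\begin{align*}
q^n \; = \; \sum_{i=1}^{k} b_i\, h_w(n+i) \; + \; \sum_{m=k}^{n} h_w(m)\, q^{n-m},
\end{align*}
and multiplying by $z^n$, summing over $n \ge 0$, and applying $h_w(n) = q\,a_w(n-1) - a_w(n)$ produces the displayed rational expression after short algebraic manipulation.

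The main obstacle is the overlap bookkeeping: one must carefully verify that the autocorrelation bits $b_{m-n}$ appearing in the overlap terms align with the indices in $C_w(z) = \sum_i b_i z^{k-i}$ so that the generating function reassembles correctly, and that the inhomogeneous term coming from $a_w(0)=1$ is accounted for when converting between $H_w$ and $S_w$. Once this case analysis is settled, the conclusion is immediate: equality of $|w|, |w'|$ and of the autocorrelations forces equality of the rational functions, hence of all $a_w(n)$, hence of all $h_w(n)$, hence of all $P_w(n)$.
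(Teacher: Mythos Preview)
Your argument is correct. The Guibas--Odlyzko generating function identity you quote is exactly the right tool: since the rational function $\sum_n a_w(n)z^n = C_w(z)/\bigl(z^k + (1-qz)C_w(z)\bigr)$ depends on $w$ only through $k$ and the correlation polynomial $C_w$, equal length and equal autocorrelation force $a_w \equiv a_{w'}$, hence $h_w \equiv h_{w'}$ and $P_w \equiv P_{w'}$. Your counting derivation of the identity $q^n = \sum_{i=1}^k b_i h_w(n+i) + \sum_{m=k}^n h_w(m)q^{n-m}$ is sound; it is essentially the relation the paper records as (\ref{original h}) together with a telescoping of $h_w = q\,a_w(\cdot-1)-a_w$.

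This is a genuinely different route from the paper's. The paper argues dynamically: the autocorrelation bits $b_i$ encode exactly which periods of periodic orbits meet the cylinder coded by $w$, and for an FDL system the successive drops in $P_w(n)$ occur precisely at those periods with magnitudes determined by the common element measure, so equal autocorrelations give identical sequences of drops and hence identical $P_w$. That argument is conceptually transparent but, as written, is more of a sketch than a formal proof. Your approach trades the periodic-orbit picture for a purely combinatorial identity that makes the dependence on $\textrm{cor}(w)$ explicit and immediately rigorous; it also shows something slightly stronger, namely that the full avoidance counts $a_w(n)$ (hence survival probabilities at every time) already coincide, not merely their successive differences. An even shorter variant in the same spirit, using only machinery the paper already cites, is to note that the recurrence (\ref{h rec}) has coefficients determined solely by the bits $b_t$, and the initial data $h_w(n)=0$ for $n<k$, $h_w(k)=1$ are the same for every length-$k$ word; uniqueness of solutions to linear recurrences then gives $h_w \equiv h_{w'}$ directly.
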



A proof of this proposition immediately follows from the definition of autocorrelations of words. Indeed the first hitting probability for any subset $A$ of the phase space equals its measure until the moment of time equal the minimal period of all periodic orbits which intersect $A$. At this moment the first hitting probability decreases by jumping to a smaller value. Such jumps occur at any moments of time corresponding to periods of periodic orbits intersecting $A$. It immediately follows from the definition of autocorrelation of words that any two sets coded by words with equal autocorrelations intersect only with such periodic orbits which have the same periods. Moreover the corresponding  jumps (decreases) of the first hitting probabilities which occur at the same moments are equal each other for the FDL-systems because all elements have the same measure.
Therefore the sequences of the first hitting probabilities for subsets of the phase space coded by the words with equal autocorrelations coincide. 

Consider the points $(n,P_w(n))$ on the plane where $n>0$ are integers. We get the first hitting probabilities curve for $w$
by connecting a point $(n,P_w(n))$ to $(n+1,P_w(n+1)$ by the straight segments for all $n$.
The next theorem establishes that nonidentical first hitting probabilities curves intersect only once.

\begin{theorem} \label{prob version}
With $N$ as given in Theorem 2.1 and under the same conditions,  there is an $N>k$ such that $P_w(n)-P_{w'}(n) \le 0$ for $n<N$, and $P_w(n) - P_{w'}(n) > 0$ for $n>N$.
\end{theorem}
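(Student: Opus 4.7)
The plan is to deduce Theorem \ref{prob version} from Theorem 2.1 by translating the hierarchy statement about the integer sequences $h_w(n)$ into one about the probabilities $P_w(n)=h_w(n+k)/q^{n+k}$. The analysis splits naturally according to whether the words $w$ and $w'$ have the same length, which corresponds to whether the coded sets sit in the same refinement of the basic Markov partition.

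In the equal-length case $k=k'$ the two probabilities share a common positive denominator $q^{n+k}$, so the sign of $P_w(n)-P_{w'}(n)$ equals the sign of $h_w(n+k)-h_{w'}(n+k)$. Theorem 2.1 applied with the index shift $m=n+k$ then immediately produces a unique threshold at which this sign switches, and the bound $N>k$ on the count-threshold translates into the analogous bound for the probability-threshold.

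The genuinely new work is in the case $k>k'$, which is forced by $\textrm{cor}(w)>\textrm{cor}(w')$ together with the bound $2^{k-1}\le \textrm{cor}(w)\le 2^k-1$ noted after Theorem 2.1. Here the denominators differ and one cannot simply clear them against Theorem 2.1. I would instead verify the three features of the statement directly: (i) for small $n$ the larger-measure word $w'$ dominates because $P_{w'}(n)$ is of order $q^{-k'}$ while $P_w(n)\le q^{-k}<q^{-k'}$; (ii) for large $n$ both first hitting probabilities decay geometrically with rates determined by the smallest root of the autocorrelation-based denominator, and the larger autocorrelation of $w$ forces the slower rate; (iii) only one crossing occurs, which I would prove by recasting the sign of $P_w(n)-P_{w'}(n)$ as the sign of the rescaled difference $q^{k-k'}h_w(n+k)-h_{w'}(n+k')$ and adapting the recursive single-crossing argument behind Theorem 2.1 to this scaled quantity.

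The main obstacle is uniqueness in (iii). The proof of Theorem 2.1 exploits a recursion relating $a_w(n)$ to earlier values through the autocorrelation of $w$ alone, and this recursion lives entirely at a single word length. To blend two such recursions at different lengths while preserving the single-sign-change property, the cleanest route is to work with Guibas--Odlyzko-type generating functions $\sum_n a_w(n)z^n$ whose denominators are built from the autocorrelation polynomials; the hypothesis $\textrm{cor}(w)>\textrm{cor}(w')$ should translate into a coefficient-wise dominance that, after normalization by $q^{k-k'}$, forces the scaled difference to change sign exactly once. This last step is precisely the analytic content one expects the authors to defer to Section 5, where the unequal-measure case is handled.
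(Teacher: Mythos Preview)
Your equal-length case $k=k'$ is exactly the paper's argument: both probabilities share the denominator $q^{n+k}$, so the sign of $P_w(n)-P_{w'}(n)$ equals the sign of $h(n+k)-h'(n+k)$, and Theorem~2.1 transfers after the index shift.

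For $k>k'$ there are two issues. First, a slip in the rescaling: from $P_w(n)=h_w(n+k)/q^{n+k}$ and $P_{w'}(n)=h_{w'}(n+k')/q^{n+k'}$ one gets, after multiplying by $q^{n+k}$, that the sign is governed by
\[
h_w(n+k)-q^{k-k'}h_{w'}(n+k'),
\]
not by $q^{k-k'}h_w(n+k)-h_{w'}(n+k')$ as you wrote. The paper records this as $\Delta(n)=h(n)-q^{k-k'}h'(n-k+k')$, and that is the object analysed in Section~5.

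Second, and more substantively, your plan to obtain the single-crossing property via Guibas--Odlyzko generating functions is not the paper's route and it is not clear it would succeed. Generating-function identities give the asymptotics of point~(ii) readily, but a coefficient-wise single sign change is a much finer statement; it does not follow from dominance of denominators or comparison of leading roots, and you have not indicated what would force it. The paper avoids generating functions entirely. It uses the two elementary pointwise bounds $H_w(n)\le h_w(n-k)$ and $H_{w'}(n)\ge (q-1)\,h_{w'}(n-k'-1)$ (the latter is Corollary~\ref{stuff}), together with the relation $h(n)=qh(n-1)-H(n)$, to show directly that once $\Delta$ turns positive the inequality $\Delta(n)\ge (q-1)\Delta(n-1)$ self-propagates (Lemmas~\ref{5.1}--\ref{5.3}). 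The whole argument is a few lines of recursion and requires no analytic input. Your outline correctly isolates the rescaled quantity (modulo the sign slip) and correctly anticipates that Section~5 handles it, but the mechanism you propose is different from, and less direct than, what actually makes the proof work.
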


According to Theorems 2.1 and 2.2, for two words with different lengths the corresponding first hitting probabilities curves intersect only at one point. This point divides the positive semi-line into a finite short times interval and an infinite long time interval. Before the moment of intersection it is less likely to hit the smaller subset of phase space (coded by the longer word) for the first time, and after the intersection it is less likely to hit the larger subset for the first time. For two elements of the same Markov partition (which have the same measure) there is also a short initial interval where the two corresponding first hitting probability curves coincide (unless these two curves coincide forever). The length of this initial interval does not exceed the (common) length of the code-words for elements of the Markov partition. After this interval there is a short times interval where it is more likely to visit one  (say the first) element of the Markov partition for the first time than the other one. The last interval is an infinite long times interval where it is more likely at any moment to visit for the first time the other one (second) element.

Take now all elements of a Markov partition. They have equal measures because we are dealing with FDL systems. Then there is initial time interval of the length equal the (same) length of words coding elements of this refinement of a basic Markov partition. After the initial interval comes a finite interval of short times where there is hierarchy of the first hitting probability curves. Then comes intermediate interval where (all!)  curves intersect. And finally there is infinite interval where there is a hierarchy of the first hitting probabilities curves which is opposite to the one in the short times interval. Therefore finite time predictions of dynamics are possible in the short times interval and in the last infinite long times interval.

\begin{figure}[htp] 
\centering
\includegraphics[width=10cm]{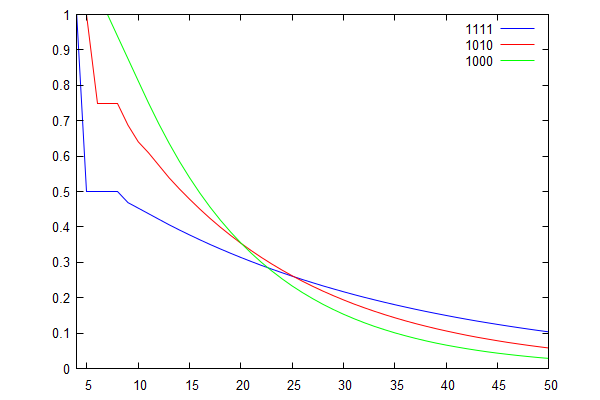} 
\caption{First hitting curves for the doubling map.}
\label{fig}
\end{figure}

Figure \ref{fig} illustrates the statement of Theorem \ref{prob version}. It depicts the first hitting probability curves for three different subsets within the domain of the doubling map.  These three subsets are encoded by the words 1111, 1010, and 1000.  Each subset has autocorrelation equal to the word that encodes it.

The next statement provides a lower bound on the length of the short times interval.

\begin{theorem}
Under the same conditions as Theorem 1 and with $N$ as defined there, if $k=k'$ and $s=s'$ then $N \ge 4k$.  If $k=k'$ and $s>s'$ then $N \ge 3k-s$.  If $k>k'$ then $N>k+1$.
\end{theorem}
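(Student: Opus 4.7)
The plan is to exploit Theorem 2.1's single-crossing structure: since $h(n) - h'(n)$ transitions from non-positive to strictly positive at most once, proving $N \geq N_0$ reduces to verifying $h(N_0) - h'(N_0) \leq 0$; the inequality then automatically extends to all $n \leq N_0$. Case 3 ($k > k'$) is handled by direct enumeration at $n = k$, $k+1$, and $k+2$: one has $h_w(k) = 1 \leq h_{w'}(k)$, $h_w(k+1) \leq q \leq h_{w'}(k+1)$, and $h_w(k+2) \leq q^2 \leq h_{w'}(k+2)$, each verified through a short case analysis on whether $w$ or $w'$ is constant or has period $2$. Thus $N \geq k+2$, i.e., $N > k+1$.

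For Cases 1 and 2 I pass to the complementary count $u_w(n) = q^{n-k} - h_w(n)$, the number of length-$n$ strings ending in $w$ at the final block that contain a strictly earlier occurrence of $w$. Inclusion-exclusion over the events $E_i = \{w \text{ occurs at position } i\}$ for $1 \leq i \leq n-k$ gives $u_w(n) = \sum_i |E_i| - \sum_{i<j}|E_i \cap E_j| + \cdots$, where $|E_i| = q^{i-1}$ when the induced overlap length $\ell_i = 2k - n - 1 + i$ is either non-positive or satisfies $b_{\ell_i}(w) = 1$, and $|E_i| = 0$ otherwise. In the regime where no triple occurrence fits in a length-$n$ string (roughly $n < 3k - 2s$), only the leading sum survives and one obtains the clean identity $u_w(n) - u_{w'}(n) = q^{n-2k} \sum_j (b_j - b'_j) q^j$, which is strictly positive by the base-$q$ leading-digit argument underlying Theorem 2.1, since $\textrm{cor}(w) > \textrm{cor}(w')$. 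This immediately handles Case 2: at $n = 3k - s$ the dominant contribution $(b_s - b'_s) q^s = q^s$ outweighs any 3-occurrence correction for $w$ (which is of size at most $O(q^{n - 3k + 2s}) = O(q^s)$), so $u_w > u_{w'}$ and hence $h - h' \leq 0$.

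Case 1 is the main obstacle. At the target $n = 4k$, both triple and quadruple overlap configurations contribute to the inclusion-exclusion and must be shown not to flip the sign of $u_w - u_{w'}$. The key point is that $k = k'$ and $s = s'$ force the dominant overlap structures to coincide: the leading triple-occurrence terms cancel between $w$ and $w'$, and any residual discrepancy arises from differences in lower autocorrelation bits, weighted by powers of $q$ strictly smaller than the leading $q^{j^*}$ from the first differing bit $j^* < s$. My plan is to formalize this by constructing a sign-preserving injection from the multi-occurrence bad strings of $w'$ into those of $w$, exploiting the shared $s$-overlap structure. Quadruple configurations appearing for $n \geq 4k - 3s$ impose strong periodicity on the word which forces analogous matching configurations for $w'$; controlling these uniformly for all admissible word pairs is the technical heart of the proof.
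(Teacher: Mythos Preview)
Your reduction step has a circularity problem. You invoke Theorem~2.1's single-crossing structure to collapse the task to a one-point check, but in this paper Theorem~2.1 is proved \emph{using} Lemma~4.2, and Lemma~4.2 is exactly the content of Theorem~2.3 (it establishes $\Delta(n)\le 0$ for all $n<3k-s$ when $s\neq s'$ and for all $n<4k$ when $s=s'$; the paper's proof of Lemma~4.3 explicitly says ``By Lemma~4.2 we may assume that $n\ge 4k$''). So taking Theorem~2.1 as a black box already presupposes what you are trying to prove. The paper's logical order is: prove $\Delta(n)\le 0$ on the whole initial range directly (Lemma~4.2, Lemma~5.1, and the two appendices), then feed this into the single-crossing argument, then read off Theorem~2.3 as a restatement. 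Your inclusion--exclusion route via $u_w(n)=q^{n-k}-h_w(n)$ is a genuinely different and potentially cleaner packaging, but it cannot lean on Theorem~2.1; it has to establish the full range $\Delta(n)\le 0$ on its own.

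Even granting the reduction, the two equal-length cases are not under control. In Case~2 you say the dominant contribution $(b_s-b'_s)q^s$ ``outweighs'' a triple-occurrence correction of size $O(q^{n-3k+2s})=O(q^s)$, but these are the same order, and for $q=2$ the lower bits of $\sum_j(b_j-b'_j)q^j$ can cancel the leading $q^s$ down to a bounded constant (take $b'_j=1$, $b_j=0$ for $j<s$), so the comparison is delicate, not automatic; the paper handles this by a careful recurrence argument with separate subcases. Case~1 is the real gap: you acknowledge that controlling triple and quadruple overlap configurations ``is the technical heart of the proof'' and propose a sign-preserving injection exploiting $s=s'$, but supply no construction. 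The paper spends two full appendices on exactly this point, working through an intricate monotonicity-in-bits argument ($\Delta(b_i+1,n)\le\Delta(b_i,n)$ and its variants) followed by an explicit extremal-pair computation, with many case splits driven by Propositions~4.1--4.2 and Corollaries~4.2--4.4. An injection argument might ultimately succeed, but what you have written is a plan, not a proof.
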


Let a word $w$ correspond to a subset $A_w$ of some ergodic dynamical system. Because $\mu(A_w)>0$, almost all orbits return to the set $A_w$. Construct now a tower with base (zero floor) $A_w$. The $nth$ floor of a tower consists of all points of the set $T^{n}A_w$ which did not return to $A_w$ within the first $n$ iterations of $T$.   Denote by $R_{A_w}(n)$ probability of the first return to $A_w$ at the moment $n$. Let $P_{A_w}(n)$ be the first hitting (first passage) probability corresponding to the measure $\mu$.

\begin{definition}
Consider an ergodic dynamical system and choose two subsets $A$ and $B$ of positive measure.  We say that tower $Q_A$ with base $A$ is better than tower $Q_B$ with base $B$ if there exists $n^*$ such that $\sum_{n>n^*}R_A(n) < \sum_{n>n^*}R_B(n)$ for all $n>n^*$.\\  Let $\hat{\xi}$ be a refinement of the Markov partition $\xi$.  We say that an element $C_{\hat{\xi}}$ if the partition $\hat{\xi}$ is an optimal base for a tower out of all elements of $\hat{\xi}$ if there is no tower better than $Q_{C_{\hat{\xi}}}$.
\end{definition}

It is well known that the first hitting probability $P_(A_w)(n)$ of $A_w$ at the moment $n$ equals 
\begin{align*}
&P_{A_w}(n) = \sum_{m > n} R_{A_w}(m).
\end{align*}

 For a given refinement of the basic Markov  partition it is generally possible to have several optimal bases with equivalent towers built over them.  In view of above relation between the first hitting and the first return probabilities the following statement about an optimal base for a tower is an immediate corollary of Theorems 2.1 and 2.2.

\begin{theorem}
Consider an FDL-system. Then for any refinement of a basic Markov partition $\xi$ there exists an optimal tower with base from this refinement such that no other of its elements yields a tower better than this one.
\end{theorem}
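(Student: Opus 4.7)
The plan is to reduce optimality of a tower base to a first-hitting-probability comparison and then invoke Theorems 2.1--2.2 together with Proposition 2.1. Using the identity $P_{A_w}(n) = \sum_{m>n} R_{A_w}(m)$ recalled just before the theorem, the inequality $\sum_{m>n} R_A(m) < \sum_{m>n} R_B(m)$ (which I read as the intended form of the ``better'' relation, the right-hand sums being functions of $n$ alone) is exactly $P_A(n) < P_B(n)$. So the tower $Q_A$ is better than $Q_B$ precisely when $P_A(n) < P_B(n)$ for all sufficiently large $n$, and my task reduces to exhibiting, within a given refinement $\hat\xi$, an element whose first hitting probability is not eventually exceeded by that of any other element of $\hat\xi$.

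Fix $\hat\xi$; every element is coded by a word of the common length $k$. The set of autocorrelations of $q$-ary words of length $k$, viewed as integers in base $q$, is a finite subset of $\mathbb{Z}$, so it has a minimum. I would pick $w^*$ to be any word coding an element $A_{w^*} \in \hat\xi$ whose autocorrelation is minimal in this set, and claim that $A_{w^*}$ is an optimal base. For any other element $A_w \in \hat\xi$, either $\mathrm{cor}(w) = \mathrm{cor}(w^*)$ or $\mathrm{cor}(w) > \mathrm{cor}(w^*)$. In the first case, Proposition 2.1 (same length, equal autocorrelations) gives $P_w(n) = P_{w^*}(n)$ for every $n$, hence $R_w(n) = R_{w^*}(n)$ for every $n$, so the two towers are equivalent and neither is strictly better than the other. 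In the second case, $k = k'$ and $\mathrm{cor}(w) > \mathrm{cor}(w^*)$, so Theorem 2.2 applied with $w$ playing the role of the larger-autocorrelation word furnishes an $N > k$ with $P_{w^*}(n) < P_w(n)$ for all $n > N$; choosing $n^* = N$ in Definition 2.2, the tower over $A_{w^*}$ is strictly better than the tower over $A_w$. Combining the two cases, no element of $\hat\xi$ yields a tower better than $Q_{A_{w^*}}$, which is what Definition 2.2 demands.

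The main obstacle is really bookkeeping rather than substance: one must check that Theorem 2.2 is being invoked with the correct direction of inequality (the \emph{smaller} autocorrelation is eventually dominated by the larger, which is the right sign for our minimization), and that the reduction from the first-return formulation in Definition 2.2 to the first-hitting formulation of Theorems 2.1--2.2 goes through via the displayed identity. Once these two matchings are made correctly, the argument is purely a statement that the finite set of attainable autocorrelations on length-$k$ words has a minimum, attained by at least one word, and that any word achieving this minimum produces an optimal base.
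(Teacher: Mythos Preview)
Your argument is correct and is precisely the route the paper takes: the paper declares Theorem 2.4 ``an immediate corollary of Theorems 2.1 and 2.2'' via the identity $P_{A_w}(n)=\sum_{m>n}R_{A_w}(m)$, and you have simply spelled out that corollary by selecting a word of minimal autocorrelation and splitting into the equal-autocorrelation case (Proposition 2.1) and the strict case (Theorem 2.2). One inconsequential slip: autocorrelations are compared as integers in base $2$, not base $q$, but since the words have the same length this does not change the ordering or the existence of a minimum.
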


 It is well known that for strongly chaotic (hyperbolic) dynamical systems periodic points are everywhere dense \cite{BP}. In particular it is true for FDL systems. Denote by $Per_{C_{\xi}}$ the minimal period out of all periodic orbits intersecting an element $C_{\xi}$ of some Markov partition $\xi$. A proof of Theorem 2.1 (see Section 4 and Appendix) implies the following lemma on periodic points and optimal towers.

\begin{lemma}
Consider an FDL-system. Let $\xi$ be any refinement of the basic Markov partition. An element  $C_{\hat{\xi}}$ such that the tower $Q_{C_{\hat{\xi}}}$ is optimal must have the maximum value of $Per_C$ out of all other elements of this Markov partition.
\end{lemma}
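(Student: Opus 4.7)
The plan is to combine Theorem 2.2 with the well-known characterization of $\textrm{cor}(w)$ through the internal periodicities of $w$, and then to translate the resulting eventual inequality between first hitting probabilities into one between tails of first return probabilities via the identity $P_{A_w}(n)=\sum_{m>n}R_{A_w}(m)$. Fix a refinement $\hat\xi$ of order $k$, so every element $A_w\in\hat\xi$ is coded by a word $w$ of length $k$ and has measure $q^{-k}$. A periodic orbit of period $p<k$ meets $A_w$ exactly when $w_j=w_{j+p}$ for all $1\le j\le k-p$, which by the definition preceding Theorem 2.1 is equivalent to $b_{k-p}(w)=1$. Consequently $Per_{A_w}=k-s_w$ when $s_w\ge 1$ and $Per_{A_w}=k$ when $s_w=0$, so maximizing $Per_{A_w}$ over $\hat\xi$ is the same as minimizing $s_w$.

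The single combinatorial step I will need is the monotonicity $s_w>s_{w'}\Rightarrow\textrm{cor}(w)>\textrm{cor}(w')$ for two words of length $k$. Both numbers have their $k$-th bit equal to $1$; by maximality of $s_w$ all bits of $\textrm{cor}(w)$ strictly between positions $s_w$ and $k$ vanish, whereas the bit at position $s_w$ equals $1$, so $\textrm{cor}(w)\ge 2^{k-1}+2^{s_w-1}$. Every $1$-bit of $\textrm{cor}(w')$ below position $k$ sits at a position $\le s_{w'}\le s_w-1$, so $\textrm{cor}(w')\le 2^{k-1}+(2^{s_w-1}-1)<\textrm{cor}(w)$; the degenerate subcase $s_{w'}=0$ is the same inequality with an empty sum.

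Now I argue by contradiction. Suppose $A_w$ is an optimal base yet some $A_{w^*}\in\hat\xi$ satisfies $Per_{A_{w^*}}>Per_{A_w}$. Then $s_{w^*}<s_w$ and the previous paragraph yields $\textrm{cor}(w)>\textrm{cor}(w^*)$. Theorem 2.2 supplies an $N>k$ with $P_w(n)-P_{w^*}(n)>0$ for all $n>N$, and the first-hitting/first-return identity rewrites this as $\sum_{m>n}R_{A_w}(m)>\sum_{m>n}R_{A_{w^*}}(m)$ for every such $n$. By the definition of ``better tower'' preceding the lemma this says $Q_{A_{w^*}}$ is strictly better than $Q_{A_w}$, contradicting optimality. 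Hence no such $A_{w^*}$ exists and $A_w$ attains the maximum of $Per_C$ on $\hat\xi$. The only delicate ingredient is the binary-number comparison of the second paragraph, which does the combinatorial work of converting ``$s$ smaller'' into ``cor smaller''; everything else is a direct application of Theorem 2.2 together with the tail-of-returns identity.
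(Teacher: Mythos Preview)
Your proof is correct and follows essentially the same route the paper intends: the paper gives no explicit argument for this lemma, stating only that it ``follows from the proof of Theorem 2.1,'' and your write-up simply makes that implication explicit. The three ingredients you isolate --- the identification $Per_{A_w}=k-s_w$, the elementary binary-expansion inequality $s_w>s_{w'}\Rightarrow\textrm{cor}(w)>\textrm{cor}(w')$, and the passage from Theorem~2.2 to tails of return times via $P_{A_w}(n)=\sum_{m>n}R_{A_w}(m)$ --- are exactly the links the paper leaves to the reader, so there is no substantive difference in approach.
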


Generally an optimal tower for a given Markov partition is not unique, i.e. several elements can serve as optimal bases.

We conclude this section with presenting an algorithm which allows to speed up escape from open dynamical systems built out of FDL-systems. Consider a FDL-system and some refinement of the basic Markov partition. (Observe that a choice of such refinement determines the scale/precision at which we want to analyze dynamical system under study). Then make first a hole in such element $A$ of this refinement which has a minimal autocorrelation. At the moment when the first hitting probability curve corresponding to $A$ gets intersected by another such curve corresponding to another element $B$ "patch" the hole $A$ and make a hole in $B$. Keep a hole in $B$ until its first hitting probabilities curve gets intersected by a curve corresponding to subset (element of refinement) $C$. Then "patch" the "hole" $B$ and make "hole" in $C$. Continue this process by switching to a new hole after each intersection. It immediately follows from Theorems 2.1 and 2.2 that this process requires only a finite number of switches and it ensures the fastest escape from FDL-systems (for holes of a given size defined by the order of refinement of the basic Markov partition).

\section{Some Results on Pattern Avoidance.}

We establish the convention that $b_0=1$ for every word $w$, ie. each word is augmented by the last symbol (digit) $1$. The purpose of this convention is to simplify statements like the following, which without this convention do not make sense when $(k-s) | k$, for example.  By the definition of cor$(w)$
\begin{align} \label{cor per}
\begin{split}
&\textrm{if } b_j=1 \textrm{ for some } j \in \{0, 1, \dots, k-1 \},\\
&\textrm{then } b_{k-(k-j)t=1} \textrm{ for all } t \in \{1, \dots, \lfloor \frac{k}{k-j} \rfloor\},
\end{split}
\end{align}
(see (\cite{M})).

Given $i$ such that $b_i=1$, let $[i] = \textrm{max } \{j: b_j=1 \textrm{ and } i=k-t(k-j) \textrm{ for some } 1 \le t \le \lfloor \frac{k}{k-j} \rfloor \}$.  In light of the relation (\ref{cor per}), it is natural to define the set $I = \{[i]: b_i=1\}$.

We will need to distinguish a few other digits of the autocorrelation in addition to $s$.  Let
\begin{align*}
&d = \textrm{min}_{j \in I} \{ j: b_j=1 \}\\
&r = \textrm{max}_{j \in I} \{ j: b_j=1 \textrm{ and } b'_j=0 \}
\end{align*}
whenever they exist.

The largest member of $I$ is always $s$.  An effect of Propositions \ref{p1} and \ref{p2} below is that $I \subset \{1, \dots, k-s-2\} \cup \{s\}$.  A further consequence of Proposition \ref{p1} is that the only member $i$ of $I$ for which $|\{j: [j]=i\}| > 1$ is $s$, hence we define $S = \{i: [i]=s \}$.

Let $H_w(n)$ be the number of strings which end with $w$, begin with $w$, and which do not contain $w$ as a substring of $k$ consecutive characters in any other place.  For $n > k$ it is easy to see that $H(n)=qh(n-1)-h(n)$.  The probability of first returning to the "hole" given by $w$ is $\frac{H_w(n)}{q^n}$.

While $H_w(n)>0$ for $n>2k$, $H_w(2k)=0$ if and only if there is an $i$ for which $b_i=b_{k-i}=1$.  It is easy to see that the condition $b_i=b_{k-i}=1$ implies $b_{k-s}=1$, and this in turn can be used to prove that $(k-s) | k$.  We can thus evaluate $H_w(n)$ for $n \leq 2k$ as follows.
\begin{align} \label{H vals}
&H_w(n) = \begin{cases} 0 & \textrm{ if } n<k,\\ -1 & \textrm{ if } n=k,\\ 1 & \textrm{ if } n=2k-i \textrm{ for some } i \in I,\\0 & \textrm{ otherwise if } n<2k\\ 0 & \textrm{ if } n=2k \textrm{ and } (k-s) | k\\ 1 & \textrm{ if } n=2k \textrm{ and } (k-s) \nmid k. \end{cases}
\end{align}

It was proved in \cite{M} that
\begin{align} \label{h ineq}
h_w(n) \geq \begin{cases} (q-1) \sum_{t=1}^{k-s} h_w(n-t) & \textrm{if } 0 < s < k,\\ (q-1) \sum_{t=1}^{k-1} h_w(n-1) & \textrm{if } s=0, \end{cases}
\end{align}
and
\begin{align} \label{h rec}
h_w(n) = q h_w(n-1)-h_w(n-k)+\sum_{t=1}^{k-1} b_t H_w(n-k+t).
\end{align}
The latter formula is derived from the following relation \cite{GO}.
\begin{align} \label{original h}
h_w(n) = \sum_{t=1}^k b_i H_w(n+t).
\end{align} 

It is easy to see that $H(n) \le h(n-k)$ for $n>k$, and we will prove below that $(q-1) h(n-k-1) \le H(n)$ for $n > k+1$. This result is the content of Corollary \ref{stuff}.

\section{A Proof of Theorems 2.1 and 2.2 for the Case $\mathbf{k=k'}$.}

We prove in this section several technical results which will be used to deduce Theorems 1 and 2.The corresponding proofs are rather long and formal.It is often difficult to understand why really a statement is true although formally it is justified.What is actually a "mechanism" which ensures that a claim is correct? It is very important to have such idea especially for statements answering questions of a new type (rather than for the incremental ones). Therefore we start with presenting a simple example to demonstrate the process (mechanism) which ensures that the first hitting probability curves for FDL-systems have not more than one intersection.

Assume for simplicity that Markov partition in the definition of FDL-systems has just two elements labeled by the symbols 0 and 1. Consider now the second refinement of this Markov partition. This refinement is also a Markov partition with eight elements. Pick for instance the elements coded as (101) and (001). Clearly $cor(101)$=101=5> is greater than $cor(001)$=100=4. The reason for this inequality is that a periodic point with minimum period which belongs to the element coded by (101) has period two while a point with minimum period in the element coded by (001) has period three. Therefore a number of strings
of length $n$ which do not contain (101) will be larger than a number of strings which do not contain (001) for all $n>4$ \cite{GO,BY}. Increase now the length of all strings by one. Then a number of strings will double and some new strings appear which are ended by one or other of our two words. Such strings will be excluded from a future consideration because they contribute to the first hitting probability at this very moment for the corresponding word. Observe though that in case of the word (101) all such strings are ending by one and therefore more strings of the length $n+1$ will remain which have the last digit $0$.
The situation is the opposite for the word (001). Therefore more than half of the remaining strings of the length $n+1$ will generate the word (001) in two steps (i.e. among the strings of the length $n+3$). On another hand less than half of all survived string at the moment $n+1$ will generate strings of the length $n+3$ ending by (101). Such process will continue forever and therefore the first hitting probability curve for (101) after intersecting the one for (001) will always remain above it. 
The words of arbitrarily long lengths may contain many internal periodicities. In fact, periodic orbits are everywhere dense in a phase space of any FDL-system. Therefore each element of any refinement of a Markov partition contains infinitely many periodic points. 
This is the reason why the proofs for a general case become long and include consideration of many different cases. 

We turn now to the formal proofs of Theorems 1-2.  Observe at first that Theorem 2 is equivalent to the claim that an $N>k$ exists such that $h(n)-q^{k-k'}h'(n-k+k') \le 0$ for $n<N$ and $h(n)-q^{k-k'}h'(n-k+k') > 0$ for $n \ge N$.

For any $i \in I$ let $T_w(i)= \textrm{max} \{ t>0: w_{k-i} \dots w_{k-i-t+1} = w_{k-j} \dots w_{k-j-t+1} \textrm{ for some } j \in I, j>i \}$, with the convention that if the latter set is empty then $T_w(i)=0$.  Again we will often denote $T(i)=T_w(i)$ when $w$ is fixed.

\begin{example}
Consider the word $HTHTHHHTHTH$ over the alphabet $\Omega = \{ H, T \}$.  Then cor$(w)=10000010101$ and $I = \{ 5,3,1 \}$.  In this example $k=11$ and $w_{k-5} \dots w_1 = HHTHTH$, $w_{k-3} \dots w_1 = THHHTHTH$, and $w_{k-1} \dots w_1 = THTHHHTHTH$.  Then $T(5)=T(3)=0$ and $T(1)=2$ since the first two letters of $w_{k-3} \dots w_1$ agree with the first two letters of $w_{k-1} \dots w_1$.  Note that for the word $HTHTHTHTHTH$, cor$(w)=10101010101$ and $I=\{9\}$.  None of $5,3$ or $1$ are in $I$ because $5=11-3(k-s)$, $3=11-4(k-s)$, and $1=11-5(k-s)$ where $\lfloor \frac{k}{k-s} \rfloor = 5$.
\end{example}

\begin{proposition} \label{p1}
Let $i \in I-\{s\}$.  Then $i+T(i) < k-s$.
\end{proposition}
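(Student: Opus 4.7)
My plan is a proof by contradiction: I would assume $i \in I - \{s\}$ satisfies $i + T(i) \geq k - s$ and derive a divisibility $(k-j) \mid (k-i)$ for some $j \in I$ with $j > i$, which by the definition of $[\,\cdot\,]$ forces $[i] \geq j > i$ and contradicts $i \in I$.

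First I would verify the unconditional inequality $i < k - s$. If instead $i \geq k - s$, then $b_i = b_s = 1$ make $k - i$ and $k - s$ both periods of $w$, and the assumption $i + s \geq k$ lets the Fine--Wilf theorem apply to conclude that $\gcd(k-i, k-s)$ is also a period. Minimality of $k - s$ (which follows from $s$ being the largest element of $\{j < k : b_j = 1\}$) forces $\gcd(k-i, k-s) = k - s$, so $(k-s) \mid (k-i)$. By definition of $[\,\cdot\,]$ this gives $[i] = s$, contradicting $i \in I - \{s\}$. Hence $i < k - s$, and the contradiction hypothesis forces $t := T(i) \geq k - s - i > 0$.

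Next, fix $j \in I$ with $j > i$ achieving $T(i) = t$, so
\begin{align*}
w_{k-i-r} = w_{k-j-r} \quad \text{for } r = 0, 1, \ldots, t-1.
\end{align*}
The relation $b_j = 1$ reads $w_{k-j+\ell} = w_\ell$ for $\ell = 1, \ldots, j$; setting $\ell = j - i - r$ (valid while $1 \leq j - i - r \leq j$) gives $w_{k-i-r} = w_{j-i-r}$, which combined with the display yields $w_{k-j-r} = w_{j-i-r}$ on the overlapping range. Together with the global period $k - s$ coming from $b_s = 1$ (active on positions $1, \ldots, s$, a range that contains the left endpoint $k - j - t + 1$ by virtue of $t \geq k - s - i$ and $j > i$), these partial matchings can be iterated until one concludes that the first $k - i$ characters of $w$ appear again starting at some position $p \in (1, k - j]$, equivalently that $(k - j) \mid (k - i)$, i.e., $i = k - t'(k - j)$ for an integer $t' \geq 2$. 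Thus $[i] \geq j > i$, the sought contradiction.

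The main obstacle is the case analysis that turns the local matching into this divisibility. I expect the argument to split on whether $t \leq j - i$ or $t > j - i$: in the former only a partial shift by $k - 2j + i$ is directly available and one must propagate it across the entire length $k - j$ by repeatedly applying the border relations from $b_j$ and $b_s$; in the latter the shift $j - i$ already plays the role of a short local period on a block long enough for a Fine--Wilf style extension. A separate subcase is $j = s$, where there is no intermediate border to mediate and the contradiction must come directly from $(k-s) \mid (k-i)$. Verifying at the end that the resulting $t'$ lies in $[1, \lfloor k/(k-j)\rfloor]$, so that the defining condition for $[i] \geq j$ is genuinely satisfied, is a routine check.
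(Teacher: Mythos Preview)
Your opening move---using Fine--Wilf to show $i<k-s$ unconditionally for $i\in I-\{s\}$---is correct and in fact cleaner than the paper's treatment, which handles this case at the end by a separate sequence construction. That part is fine.

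The gap is in the main step. You aim to show $(k-j)\mid(k-i)$ for the \emph{specific} $j\in I$ that realizes $T(i)$, but you never establish this, and the claimed equivalence between ``the first $k-i$ characters of $w$ appear again starting at some position $p\in(1,k-j]$'' and ``$(k-j)\mid(k-i)$'' is not an equivalence at all: a prefix recurrence at position $p$ yields a period $p-1$ on an initial segment, not a divisibility relation between $k-j$ and $k-i$. Your last paragraph concedes that the case analysis turning the local matching into the divisibility is ``the main obstacle'' and is only sketched by expectation; that is precisely where the entire content of the proposition lives.

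The paper does not try to prove $(k-j)\mid(k-i)$ for that particular $j$. Instead it first uses the hypothesis $i+T(i)\ge k-s$ together with $b_i=b_j=b_s=1$ to show that $d_0:=j-i$ is itself a full period of $w$, i.e.\ $b_{k-d_0}=1$. From there it runs a Euclidean-style descent: given $d_{n-1}$ with $b_{k-d_{n-1}}=1$, if $i$ is not already of the form $k-\ell d_{n-1}$ one locates the unique $\ell_n$ with $k-\ell_n d_{n-1}>i>k-(\ell_n+1)d_{n-1}$, sets $d_n=k-\ell_n d_{n-1}-i<d_{n-1}$, and proves $b_{k-d_n}=1$. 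The sequence must terminate at some $d_N$ with $i=k-\ell(k-(k-d_N))$, giving a witness $i^*=k-d_N>i$ with $b_{i^*}=1$ and $(k-i^*)\mid(k-i)$; this $i^*$ need not equal $j$. So the contradiction $[i]>i$ is reached, but through a possibly different divisor than the one you target. Your Fine--Wilf instinct could perhaps be pushed to shortcut parts of this descent, but as written the proposal hand-waves exactly the step that carries all the weight.
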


\begin{proof}
For any $i$ such that $b_i=1$ and for any $t$ satisfying $k-i \le t \le k$ one has $w_t \dots w_{t-(k-i)+1} = w_{t-l(k-i)} \dots w_{t-(l+1)(k-i)+1}$ for $0 \le l \le \lfloor \frac{t}{k-i} \rfloor -1$.  Further, if $e=t-\lfloor \frac{t}{k-i} \rfloor (k-i)$ and $e>0$ then $w_t \dots w_{t-e+1} = w_e \dots w_1$.  This is a consequence of the structure of the correlation function as described by (\ref{cor per}).  Therefore when $b_i=1$ we will say that $w$ contains a $k-i$ period.

Let $i \in I$.  Suppose first that $T(i)>0$ and for a contradiction suppose that $i+T(i) \geq k-s$.  Let $i' \in I$, $i'>i$ be such that $w_{k-i} \dots w_{k-i-T(i)+1} = w_{k-i'} \dots w_{k-i'-T(i)+1}$.

Since $b_{i'}=1$, $w_{i'} \dots w_1 = w_k \dots w_{k-i'+1}$ which implies $w_{k-(i'-i)} \dots w_{k-i'+1} = w_{i'-(i'-i)} \dots w_1 = w_k \dots w_{k-i+1}$.  Since $b_i=1$, similarly one has 
\begin{align} \label{000}
    &w_{k-(i'-i)} \dots w_{k-i'+1} = w_k \dots w_{k-i+1}. 
\end{align}
Since $i-(k-s) \geq -T(i)$ we have $w_{k-i'} \dots w_{k-i'+i-(k-s)+1} = w_{k-i} \dots w_{k-i+i-(k-s)+1} = w_{k-i} \dots w_{s+1}$.  Therefore
\begin{align*}
    &w_{k-(i'-i)} \dots w_{k-(i'-i)-(k-s)+1}=w_k \dots w_{k-i+1} w_{k-i'} \dots w_{k-(i'-i)-(k-s)+1}\\
    &=w_k \dots w_{k-i+1} w_{k-i} \dots w_{s+1} = w_k \dots w_{s+1},
\end{align*}
where we have used (\ref{000}) in the first equality.  

Since $w$ contains a $k-s$ period,
\begin{align} \label{k-s periodic}
\begin{split}
    &w_{k-(i'-i)-l(k-s)} \dots w_{k-(i'-i)-(l+1)(k-s)+1}\\
    &=w_{k-(i'-i)} \dots w_{k-(i'-i)-(k-s)+1}= w_k \dots w_{s+1}\\
    &=w_{k-l(k-s)} \dots w_{k-(l+1)(k-s)+1}
\end{split}
\end{align}
for every $0 \leq l \leq \lfloor \frac{k-i}{k-s} \rfloor -1$.  Further, for $e = k-(i'-i)-\lfloor \frac{k-(i'-i)}{k-s} \rfloor (k-s)$, if $e>0$ one has 
\begin{align} \label{k-s periodic remainder}
\begin{split}
w_e \dots w_1 &= w_{k-(i'-i)} \dots w_{k-(i'-i)-e+1}\\
&=w_{k-\lfloor \frac{k-(i'-i)}{k-s} \rfloor (k-s)} \dots w_{i'-i+1}
\end{split}
\end{align}
Together, \ref{k-s periodic} and \ref{k-s periodic remainder} imply that $w_{k-(i'-i)} \dots w_1 = w_k \dots w_{i'-i+1}$, or that $b_{k-(i'-i)}=1$.\\

Our goal now is to show that there is some index $i^*$ such that $b_{i^*}=1$, $i^*>i$, and $i=k-l(k-i^*)$ for some $l>0$.  Doing this would contradict the fact that $i \in I$.  Let $d_0=i'-i$.  We will construct a strictly decreasing sequence $\{ d_n \}_{n=0}^N$ of positive integers such that $b_{k-d_n}=1$, $d_n=k-l_nd_{n-1}-i$ where $l_n$ is the unique positive integer such that $k-l_n d_{n-1}>i>k-(l_n+1)d_{n-1}$, and $i^*=k-d_N$ has the desired property.

If there is some $t$ for which $k-td_0 = i'$ then $k-(t+1)d_0 = i$, and we may take $N=0$.  Similarly $N=0$ if $k-td_0=i$ for some $t$.  Otherwise, there exists $t$ for which $i' > k-td_0 > i$.  Since $b_{k-d_0}=1$, the word $w$ contains a $d_0$ period.  With $d_1 = k-td_0-i$ it is easy to see that $b_{k-d_1}=1$ (a more detailed exposition for general $n$ is below).

For $n>1$ suppose that $d_{n-1}$ is already defined.  If $i \ne k-ld_{n-1}$ for some $l>0$ then $N = n-1$.  In addition one cannot have $k-l_n d_{n-1} = k-l_{n-1}d_{n-2}$ as this implies $i-(k-l_nd_{n-1})=i-(k-l_{n-1}d_{n-2}) = d_{n-1}$ hence $k-(l_n-1)d_{n-1} = i$, and again $N = n-1$.  Otherwise denote $\iota = k-l_{n-1}d_{n-2}$ and observe that there is some $l_n$ such that $\iota > k-l_n d_{n-1} > i$.  Since $w$ contains a $d_{n-1}$ period, we will show that $b_{k-l_nd_{n-1} - i}=b_{k-d_n}=1$.  Let $\delta=\iota-(k-l_nd_{n-1})$.  Observe that with this notation, $d_{n-1} = \delta+d_n$.

For any $0 \le l < \lfloor \frac{k-d_n}{d_{n-1}} \rfloor$ one has
\begin{align*}
    w_{k-d_n-ld_{n-1}} \dots w_{k-d_n-(l+1)d_{n-1}+1}&=w_{k-d_n-(l_n-1)d_{n-1}} \dots w_{k-d_n-l_nd_{n-1}+1}\\
    &=w_{\iota} \dots w_{i+1}\\
    &=w_k \dots w_{k-d_{n-1}}\\
    &=w_{k-ld_{n-1}} \dots w_{k-(l+1)d_{n-1}}.
\end{align*}
In the first equality we have used the $d_{n-1}$ periodicity of $w$, in the second we have used the fact that $k-d_n-(l_n-1)d_{n-1} = \iota$, in the third that $b_\iota=1$, and again in the fourth the $d_{n-1}$ periodicity of $w$.

Let $e=k-d_n-\lfloor \frac{k-d_n}{d_{n-1}} \rfloor d_{n-1}$.  If $e>0$ and $e<\delta$ then one has
\begin{align*}
    w_e \dots w_1 &=w_{k-l_nd_{n-1}-d_n} \dots w_{k-l_nd_{n-1}-d_n-e+1}\\
    &=w_{i} \dots w_{i-e+1}\\
    &=w_k \dots w_{k-e+1}\\
    &=w_{k-\lfloor \frac{k-d_n}{d_{n-1}} \rfloor d_{n-1}} \dots w_{d_n+1}.
\end{align*}
If $e>0$ and $e>\delta$ then
\begin{align*}
    w_e \dots w_1 &= w_i \dots w_{i-\delta+1} w_{i-\delta} \dots w_{i-e+1}\\
    &=w_k \dots w_{k-\delta+1} w_{k-\delta} \dots w_{k-e+1}\\
    &=w_k \dots w_{k-e+1}\\
    &=w_{k-\lfloor \frac{k-d_n}{d_{n-1}} \rfloor d_{n-1}} \dots w_{d_n+1}.
\end{align*}
One thus has $w_{k-d_n} \dots w_1 = w_k \dots w_{d_n+1}$ and $b_{k-d_n}=1$.  Since $d_n < d_{n-1}$ as long as $[i] \ne k-d_{n-1}$, the sequence $\{d_n\}$ is strictly decreasing.  Since $k-d_n$ is bounded below by 1, there must be some $n$ for which $[i]=k-d_n$, and we let $N=n$.\\

If $T(i)=0$, the proof is similar to what we have just done.  Supposing $i+T(i) = i > k-s$, there is some $t$ for which $k-t(k-s) > i > k-(t+1)(k-s)$, otherwise $[i]=s$ and $i \notin I$.  Since $w$ contains a $k-s$ period, one can show that $b_{k-t(k-s)-i}=1$.  Again we can construct a strictly increasing sequence of integers $\{ i_n \}_{n=0}^N$ such that $b_{i_n}=1$ and $[i]=i_N$.  We omit the proof due to its redundancy.
\end{proof}

\begin{corollary} \label{c0}
$\{i: b_i=1\} =\{i: [i]=s\} \cup \left( I-\{s\} \right)$.
\end{corollary}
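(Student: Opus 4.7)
My plan is to prove the two set inclusions separately, with the harder direction turning on a uniqueness-of-preimage consequence of Proposition \ref{p1}.

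First I would record an idempotence property of the map $i \mapsto [i]$: whenever $b_i = 1$ and $[i] = j$, one has $[j] = j$. If instead $[j] = j^\ast > j$, then $j = k - t'(k - j^\ast)$ for some $t' \geq 1$, and combining with $i = k - t(k - j)$ gives $i = k - tt'(k - j^\ast)$, where $tt' \leq \lfloor k/(k - j^\ast) \rfloor$ since $i \geq 1$. Thus $j^\ast$ is admissible in the maximum defining $[i]$, contradicting $[i] = j < j^\ast$. Idempotence yields the convenient identification $I = \{i : b_i = 1 \text{ and } [i] = i\}$.

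The containment $\{i : [i] = s\} \cup (I - \{s\}) \subseteq \{i : b_i = 1\}$ is now immediate from the definitions: $[i] = s$ requires $b_i = 1$, and every element of $I$ is of the form $[j]$, with $b_{[j]} = 1$ by definition of $[\cdot]$. For the opposite containment, I take $i$ with $b_i = 1$ and set $j = [i]$. If $j = s$, then $i \in \{i' : [i'] = s\}$. If $j = i \neq s$, then by the identification above $i \in I - \{s\}$. The remaining case is $i < j$ with $j \neq s$; here $j \in I - \{s\}$, and idempotence gives $[j] = j$, so both $i$ and $j$ map to $j$ under $[\cdot]$, i.e.\ $|\{i' : [i'] = j\}| \geq 2$. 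This contradicts the consequence of Proposition \ref{p1} recorded in the paragraph following its statement: for every $j \in I - \{s\}$ the preimage set $\{i' : [i'] = j\}$ is exactly $\{j\}$.

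The main obstacle is this uniqueness statement itself. I would approach it by assuming for contradiction that $[i'] = j$ for some $i' < j$ with $j \in I - \{s\}$, so that $i' = k - t(k - j)$ with $t \geq 2$, forcing $j \geq (k+1)/2$ and hence $s < (k-1)/2$. The strategy is then to combine the bound $j + T(j) < k - s$ from Proposition \ref{p1} with the overlap relation (\ref{cor per}) and the definition of $T(j)$ to exhibit a positive integer $t'$ with $i' = k - t'(k - s)$. Once this is in hand, $s$ becomes an admissible candidate in the maximum defining $[i']$, so $[i'] \geq s > j$, contradicting $[i'] = j$. Making the period-compatibility step rigorous---effectively a Fine--Wilf-style comparison of the periods $k - j$ and $k - s$ on the relevant suffix of $w$---is the delicate technical piece.
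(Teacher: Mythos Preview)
Your overall architecture is sound: the idempotence observation, the identification $I=\{i:b_i=1,\ [i]=i\}$, and the case split on $[i]$ are all correct and match the paper's logic. The only issue is the final ``uniqueness'' step, where you stop just short of a contradiction that is already in your hands and instead propose an unnecessary Fine--Wilf detour.

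Concretely: you assume $j=[i']\in I-\{s\}$ with $i'<j$, deduce $j\ge (k+1)/2$, and then (via Proposition~\ref{p1}) obtain $s<(k-1)/2$. At this point you are done. Since $s$ is the largest element of $I$ and $j\in I-\{s\}$, you have $j<s<(k-1)/2$, which contradicts $j\ge (k+1)/2$. There is nothing further to prove, and in particular no need to manufacture a representation $i'=k-t'(k-s)$ via period comparison. The ``delicate technical piece'' you flag is a phantom.

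This is exactly the paper's argument, packaged slightly differently. The paper observes directly that any $i\in I-\{s\}$ satisfies both $i<s$ (since $s=\max I$) and $i<k-s$ (from Proposition~\ref{p1}), hence $i<k/2$; then $k-i>k/2$ forces $l=1$ in any representation $j=k-l(k-i)$, so the fibre of $[\cdot]$ over $i$ is the singleton $\{i\}$. Your route reaches the same inequality from the other side ($j\ge(k+1)/2$ versus $j<k/2$) but then fails to close. Drop the last paragraph of your sketch and insert the one-line contradiction above; the proof is then complete and essentially identical to the paper's.
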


\begin{proof}
    For $i \in I-\{s\}$ one has $i<s$ and $i<k-s$, hence $i<\frac{k}{2}$.  If $b_j=1$ and $j \ne k-t(k-s)$ for any $t$, then $j = k-l(k-i)$ for some $i \in I-\{s\}$ if and only if $l=1$ since $k-i>\frac{k}{2}$.  Thus either $j \in \{i: [i]=s\}$ or $j=i$ for some $i \in I-\{s\}$.
\end{proof}

\begin{corollary} \label{stuff}
$H(n) \ge (q-1)h(n-k-1)$ for $n>k+1$.
\end{corollary}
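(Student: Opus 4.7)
My plan is to prove the inequality by a combinatorial injection. For each $c \in \Omega$ and each string $y$ of length $n-k-1$ counted by $h(n-k-1)$ (so $y$ ends in $w$ and has no earlier occurrence of $w$), I will form $x = w \cdot c \cdot y$. The map $(c, y) \mapsto x$ is clearly injective; the resulting $x$ has length $n$, begins with $w$, and (since the final $k$ characters of $y$ equal $w$) ends with $w$. Thus $x$ is counted by $H(n)$ exactly when $x$ has no interior occurrence of $w$.

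Because $y$ contains $w$ only as its final $k$ characters, any interior copy of $w$ in $x$ must start at some position $j$ with $2 \le j \le k+1$. Writing $j = k - i + 1$ with $i \in \{0,1,\dots,k-1\}$, a direct comparison of substrings shows that $w$ appears at position $j$ of $x$ iff all three conditions hold simultaneously: $b_i = 1$, $c = w_{i+1}$, and $y_1 \cdots y_{k-i-1} = w_{i+2} \cdots w_k$ (with the last condition vacuous when $i = k-1$).

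The heart of the argument is the claim that for every admissible $y$, at most one value of $c \in \Omega$ is ``bad'' in the above sense. Given this, the number of good pairs $(c, y)$ is at least $(q-1) h(n-k-1)$ and the injectivity of $(c, y) \mapsto x$ yields $H(n) \ge (q-1) h(n-k-1)$. To establish the claim, suppose $i_1 < i_2$ both trigger bad conditions for the same $y$. Comparing the two prefix equalities forces the partial periodicity $w_m = w_{m+p}$ for $i_1 + 2 \le m \le k - p$, where $p = i_2 - i_1$; extending it one index down to $m = i_1 + 1$ would give $w_{i_1+1} = w_{i_2+1}$, so that the two candidate bad characters $w_{i_1+1}$ and $w_{i_2+1}$ actually coincide, contradicting having two distinct bad $c$'s. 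To extend the periodicity, I plan to decompose $\{i : b_i = 1\} = S \cup (I - \{s\})$ via Corollary \ref{c0} and split into cases: when $i_1, i_2 \in S$, both are of the form $k - l(k-s)$, so $p$ is a multiple of the global period $k-s$ coming from $b_s = 1$, and iterating $w_{i_1+1} = w_{i_1+1+(k-s)} = \cdots$ produces the desired equality; when at least one of $i_1, i_2$ lies in $I - \{s\}$, Proposition \ref{p1}'s bound $i + T(i) < k - s$ tightly restricts the positions and combines with $b_{i_1} = b_{i_2} = 1$ to rule out the configuration or again force the equality.

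The main obstacle will be precisely this last step: verifying in each subcase that the partial periodicity from the $y$-condition plus the periodicities encoded by $b_{i_1}=b_{i_2}=1$ extend down one index, which is where the structural content of Proposition \ref{p1} and Corollary \ref{c0} is really used. I will also need to handle the easy edge case $s = 0$ (unbordered $w$) separately: then the sum in (\ref{h rec}) is empty, so $H(n) = h(n-k)$, and the desired inequality $h(n-k) \ge (q-1) h(n-k-1)$ is immediate from (\ref{h ineq}); similarly the very small range $k+1 < n \le 2k$ is trivial because $h(n-k-1) = 0$ there.
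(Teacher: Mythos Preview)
Your injection approach is genuinely different from the paper's and, once the key step is nailed down, gives a complete and more transparent proof. The paper argues by splitting the range of $n$ into three pieces ($k+1<n\le 2k$, $2k<n<3k-s$, $n\ge 3k-s$) and manipulating the recursion (\ref{H rearranged}) together with the inequality (\ref{h ineq}), Proposition~\ref{p1} and Corollary~\ref{c0}. Your argument instead exhibits $H(n)$ as the number of ``good'' pairs $(c,y)$ with $y$ counted by $h(n-k-1)$, reducing everything to the combinatorial statement that at most one value of $c$ is bad for each fixed $y$. That statement is true, and the injection then gives the bound uniformly for all $n>2k$ without any range-splitting; the small range $k+1<n\le 2k$ and the case $s=0$ are indeed trivial for the reasons you give.

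The gap is in your proposed proof of the ``at most one bad $c$'' claim. The case $i_1,i_2\in S$ does work exactly as you say (both $i$'s differ by a multiple of $k-s$, and the global period $k-s$ carries $w^{(i_1+1)}$ to $w^{(i_2+1)}$). But when one of $i_1,i_2$ lies in $I\setminus\{s\}$, invoking Proposition~\ref{p1} does not obviously close the argument: the quantity $T(i)$ measures agreement of suffixes starting at $i+1$, whereas under your hypothesis the agreement starts at $i_1+2$ (precisely because you are assuming $w^{(i_1+1)}\ne w^{(i_2+1)}$), so the bound $i+T(i)<k-s$ is not directly applicable. Fortunately the claim has a short self-contained proof that avoids Proposition~\ref{p1} entirely. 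With $p=i_2-i_1$ and $q=k-i_2$, the overlap of the two $y$-conditions gives period $p$ on the block $[i_1+2,k]$, and $b_{i_2}=1$ gives $w^{(\ell)}=w^{(\ell+q)}$ for $1\le\ell\le i_2$. Apply the latter with $\ell=i_1+1$ to get $w^{(i_1+1)}=w^{(k-p+1)}$, which lies in the period-$p$ block; then apply it with $\ell\in[i_1+2,i_2]$ to see that, modulo $p$, the residue classes $\rho$ and $\rho+q$ (for $\rho=0,\dots,p-2$) carry the same letter. Following the chain $\rho\mapsto\rho+q\bmod p$ starting from the residue of $k-p+1$ (namely $(q-1)\bmod p$) one reaches the residue $p-1$ of $i_2+1$ after finitely many steps, so $w^{(k-p+1)}=w^{(i_2+1)}$ and hence $w^{(i_1+1)}=w^{(i_2+1)}$. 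This replaces your case split cleanly and completes your proof.
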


\begin{proof}
Observe that (\ref{H vals}) implies $H(n) \ge 0 = (q-1)h(n-k-1)$ for $k+1 < n \le 2k$.

Let $2k < n < 3k-s$.  For $1 \le i \le s$ one has $k+i < n-k+i < 2k-s+i \le 2k$, so by (\ref{H vals}) we have $H(n-k+i)=1$ if and only if $b_{3k-n-i}=1$ and $3k-n-i \in I$.  If $3k-n-i \in I-\{s\}$ then $3k-n-i < k-s$ by Proposition 1 and hence $b_{3k-n-i}=0$ for $n \le 2k+s-i$.  In particular, $b_{3k-n-i}=0$ when $n=2k+1$ and $i \in I-\{s\}$.  Thus $\sum_{i=1}^s b_i H(k-i+1) \le 1$ and $\sum_{i=1}^s b_i H(n-k+i) \le n-2k$.  Since $h(n)=q^{n-k}$ for $k \le n < k-s$ one has
\begin{align*}
    H(n) &= h(n-k) - \sum_{i=1}^s b_i H(n-k+i)\\
    &=qq^{n-2k-1} - \sum_{i=1}^s b_i H(n-k+i)\\
    &= (q-1)q^{n-2k-1} + \left( q^{n-2k-1} - \sum_{i=1}^s b_i H(n-k+i) \right)\\
    &\ge (q-1)q^{n-2k-1} + \left( q^{n-2k-1} - (n-2k) \right)\\
    &\ge (q-1)q^{n-2k-1} = (q-1)h(n-k-1).
\end{align*}

For $n \ge 3k-s$ observe that $\sum_{i \in S} H(n-k+i) \le \sum_{i=1}^k b_i H(n-2k+s+i) = h(n-2k+s)$.  Then one has
\begin{align*}
    &H(n) = h(n-k) - \sum_{i=1}^s b_i H(n-k+i)\\
    &=h(n-k) - \sum_{i \in S} H(n-k+i) - \sum_{i \in I-\{s\}} H(n-k+i)\\
    &\ge h(n-k) - h(n-2k+s) - \sum_{i \in I-\{s\}} h(n-2k+i)\\
    &\ge (q-1) \sum_{t=1}^{k-s} h(n-k-t) - h(n-2k+s) - \sum_{i=1}^{k-s-2} h(n-2k+s+i)\\
    &\ge (q-1) h(n-k-1)
\end{align*}
where we have used Corollary \ref{c0}.
\end{proof}

\begin{proposition} \label{p2}
Suppose that $s \neq k-1$.  Then either $b_{t(k-s)}=0$ for every $1 \leq t \leq \lfloor \frac{k}{k-s} \rfloor$ or $b_{t(k-s)-1}=0$ for every $1 \leq t \leq \lfloor \frac{k+1}{k-s} \rfloor$
\end{proposition}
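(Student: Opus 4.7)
The plan is to reinterpret the condition $b_j = 1$ as the statement ``$w$ has period $k - j$'' (which is exactly the definition of $\textrm{cor}(w)$), and then apply the Fine--Wilf periodicity theorem to force a contradiction with the maximality of $s$ whenever the alternative of Proposition \ref{p2} fails. Let $p := k - s$ and write $k = qp + r$ with $0 \le r < p$. The hypothesis $s \neq k-1$ immediately gives $p \ge 2$: if $p = 1$ then $w$ would be a constant word, forcing $b_{k-1} = 1$ and $s = k-1$.

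In the case $r > 0$, the goal is to establish the first alternative. Suppose for contradiction that $b_{tp} = 1$ for some $1 \le t \le q = \lfloor k/p \rfloor$. Then $w$ admits both $p$ and $k - tp = (q-t)p + r$ as periods, and since $\gcd(p,\, k - tp) = \gcd(p, r)$, the Fine--Wilf length inequality $k \ge p + (k - tp) - \gcd(p, r)$ reduces to $(t-1)p + \gcd(p, r) \ge 0$, which is automatic. Fine--Wilf then delivers a period $\gcd(p, r) \le r < p$ for $w$, so that $b_{k - \gcd(p, r)} = 1$ with $k - \gcd(p, r) > s$, contradicting the maximality of $s$.

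In the case $r = 0$ (so $p \mid k$ and, since $p \ge 2$, $\lfloor (k+1)/p \rfloor = q$), the goal is the second alternative. Suppose $b_{tp - 1} = 1$ for some $1 \le t \le q$. Then $w$ has periods $p$ and $k - tp + 1$; because $p \mid k$, one has $\gcd(p,\, k - tp + 1) = \gcd(p, 1) = 1$, and Fine--Wilf's length requirement reduces to $tp \ge p$, which holds for $t \ge 1$. Hence $w$ has period $1$, i.e. $w$ is a constant word, and so $b_{k-1} = 1$, again contradicting $s \neq k-1$.

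The main delicacy is verifying the Fine--Wilf length hypothesis uniformly across the range of $t$, especially at the boundary $t = q$ where the secondary period becomes small (equal to $r$ in the first case and to $1$ in the second). The degenerate scenario $s = 0$ (equivalently $p = k$, $q = 1$, $r = 0$) falls into the second branch and collapses to the single instance $t = 1$, which reproduces exactly the hypothesis $s \neq k-1$. The remaining bookkeeping is routine: one need only confirm the equivalence between ``$b_j = 1$'' and ``$w$ has period $k - j$'' directly from the definition of $\textrm{cor}(w)$, and note that any period of $w$ strictly smaller than $p$ would make some $b_i = 1$ with $i > s$.
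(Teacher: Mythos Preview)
Your proof is correct and takes a genuinely different route from the paper's. The paper argues by a direct combinatorial analysis: it first establishes (via Proposition~\ref{p1} and a lengthy case analysis) that $I-\{s\}\subset\{1,\dots,k-s-2\}$, then splits according to whether $b_{k-s}=0$ or $b_{k-s}=1$, and in the latter case chases the implied periodicities by hand to rule out $b_{t(k-s)-1}=1$. Your argument instead recognizes that $b_j=1$ is exactly the statement that $w$ has period $k-j$, and then invokes the Fine--Wilf theorem to show that two incompatible periods would force a strictly smaller period, contradicting the maximality of $s$. This is considerably shorter and conceptually cleaner; it bypasses Proposition~\ref{p1} entirely and reduces the whole proposition to a one-line application of a classical lemma. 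The paper's approach, by contrast, keeps everything internal to the autocorrelation framework already developed, which has the advantage of being self-contained but at the cost of substantially more bookkeeping.

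Two minor remarks. First, you reuse the symbol $q$ for the quotient $\lfloor k/p\rfloor$, which clashes with the paper's use of $q$ for the alphabet size; you should rename it (say $m$) before inclusion. Second, your verification of the Fine--Wilf length hypothesis in Case~1 at $t=1$ deserves to be made explicit, since there the two periods are $p$ and $s=k-p$, summing to exactly $k$, so the hypothesis $k\ge p+s-\gcd(p,s)$ holds with no slack; this is the tightest instance and worth flagging.
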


\begin{proof}
We use the following two statements, the first of which is obvious from Proposition 1.
\begin{align} \label{Proposition -1}
&\textrm{If } b_{k-s}=1 \textrm{ then } [k-s]=s.
\end{align}
\begin{align} \label{Proposition -2}
\begin{split}
&\textrm{If } b_{k-s-1}=1 \textrm{ then } [k-s-1]=s.
\end{split}
\end{align}
We prove (\ref{Proposition -2}).  Suppose $b_{k-s-1}=1$.  If $b_{k-s}=1$ then $b_t=1$ for every $1 \leq t \leq k-s$ and so $w_t = w_k$ for every $1 \leq t \leq k-s$.  Since $w$ contains a $k-s$ period $w=\underbrace{w_k*\dots*w_k}_{\textrm{k times}}$ and $[k-s-1]=s$.

If $b_{k-s}=0$ then either $k-s-1=s$ and the result follows, or there is some $t>0$ such that $k-t(k-s)>k-s$.  Let $i=k-s-1$, $\iota=k-t(k-s)$, and $d = \iota-i$.  Since $b_{\iota}=1$ and $b_i=1$ it is easy to see that $w_{\iota} \dots w_1$ contains a $d$ period.  As a result $b_{\iota-l d}=1$ for every $1 \leq l \leq \lfloor \frac{\iota}{d} \rfloor$.  Let $L$ be such that $\iota-Ld > k-(k+1)(k-s)>0$ and $\iota-(L+1)d \leq k-(k+1)(k-s)$.  If $d \nmid (k-s)$ then $\iota-Ld - \big( k-(k+1)(k-s) \big) = d' < d$.  Since $b_{\iota-Ld}=1$ and $b_{k-(k+1)(k-s)}=1$ it must be that $w_\iota \dots w_1$ contains a $d'$ period, and hence $b_{\iota-d'}=1$.  Since $\iota-d'>\iota-d=k-s-1$, with $i'=[\iota-d']$ one has $[i'] \neq s$ and $[i']+T([i']) \geq k-s$, a contradiction to Proposition 1.  It follows that $d | (k-s)$ which implies that $w$ itself contains a $d$ period.  Then $b_{k-d}=1$ but since $d<k-s$ this contradicts the definition of $s$.

The following statement is a corollary of (\ref{Proposition -1}) and (\ref{Proposition -2}).
\begin{align*}
&\textrm{Suppose that } s \neq k-1.\textrm{  Then } I-\{s\} \subset \{1, \dots, k-s-2 \}.
\end{align*}

If $b_{t(k-s)}=1$ for some $t>1$ then by Proposition (\ref{p1}) it must be that $[t(k-s)]=s$ and hence there is some $l$ such that $k-l(k-s)=t(k-s)$, whence $(k-s) | k$.  According to (\ref{cor per}) it must be that $b_{k-s}=1$ as well.  Thus, if $b_{k-s}=0$ then $b_{t(k-s)}=0$ for every $t$.

If $b_{k-s}=1$ then $b_{k-s-1}=0$ as otherwise $s=k-1$.  If $b_{t(k-s)-1}=1$ for some $t>1$ then $[t(k-s)-1]=s$ and $k=m(k-s)-1$.  Since $b_{k-t(k-s)}=1$, one has $b_{(m-t)(k-s)-1}=1$ for every $1 \le t \le m-1$.  With $t=m-1$ this implies in particular that $b_{k-s-1}=1$, a contradiction. Thus $b_{t(k-s)-1}=0$ for every $t>1$.
\end{proof}

\begin{lemma} \label{l1}
If $s>0$, $1 \leq l \leq k-1$, and $n \geq 2k+l$ then $H(n) \geq (q-1) \sum_{t=1}^l H(n-t)$.
\end{lemma}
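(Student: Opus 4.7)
The plan is to use the recursion $H(n) = h(n-k) - \sum_{t=1}^s b_t H(n-k+t)$, which follows from equation (\ref{original h}) together with $b_k=1$, and to combine it with inequality (\ref{h ineq}) for $h$ and Corollary \ref{stuff} for $H$. Substituting this recursion into both $H(n)$ and each $H(n-j)$ in the desired inequality gives
\begin{equation*}
B(n) := H(n) - (q-1) \sum_{j=1}^l H(n-j) = A(n-k) - \sum_{t=1}^s b_t B(n-k+t),
\end{equation*}
where $A(m) := h(m) - (q-1) \sum_{j=1}^l h(m-j)$; the goal is to prove $B(n) \ge 0$ for $n \ge 2k+l$.

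First I would dispatch the case $l = 1$ immediately: Corollary \ref{stuff} gives $H(n) \ge (q-1) h(n-k-1)$, while the obvious injection from $H(n-1)$ into $h(n-k-1)$ (sending a string to its tail after the initial copy of $w$) yields $H(n-1) \le h(n-k-1)$, so $(q-1) H(n-1) \le (q-1) h(n-k-1) \le H(n)$. For $l \ge 2$ I would perform a strong induction on $n$ with $l$ fixed. Once $n \ge 3k+l-1$, the inductive hypothesis covers every term $B(n-k+t)$ with $b_t = 1$, so $\sum_{t=1}^s b_t B(n-k+t) \ge 0$, and it remains to show that $A(n-k)$ dominates this sum. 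Applying the upper bound $B(m) \le H(m) \le h(m-k)$ together with the estimates $\sum_{t \in S} H(n-k+t) \le h(n-2k+s)$ and $\sum_{t \in I-\{s\}} H(n-k+t) \le \sum_{t \in I-\{s\}} h(n-2k+t)$ established inside the proof of Corollary \ref{stuff}, and the containment $I - \{s\} \subset \{1, \ldots, k-s-2\}$ coming from Proposition \ref{p2} with Corollary \ref{c0}, the required inequality reduces to an estimate among $h$-values that follows from repeated use of (\ref{h ineq}). The base cases $2k+l \le n \le 3k+l-2$, in which the inductive hypothesis does not cover every term in the sum, are handled by substituting the explicit values of $H(m)$ for $m \le 2k$ provided by (\ref{H vals}).

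The main obstacle is the range $l > k-s$, where (\ref{h ineq}) no longer guarantees $A(n-k) \ge 0$ and the crude bookkeeping above is insufficient. To overcome this I would iterate the recursion one more step, expressing each $B(n-k+t)$ back in terms of $A$-values and deeper $B$-values; the sharper structural restriction of Proposition \ref{p2} --- namely, that either $b_{t(k-s)} = 0$ for all $t \ge 1$ or $b_{t(k-s)-1} = 0$ for all $t \ge 1$ --- significantly thins the set of active indices in the iterated sum, allowing its contribution to be absorbed into the positive terms produced by applying (\ref{h ineq}) at the deeper level and thereby restoring $B(n) \ge 0$.
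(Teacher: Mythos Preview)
Your inductive framework has a sign problem that you never resolve. The recursion you correctly derive is $B(n) = A(n-k) - \sum_t b_t B(n-k+t)$, so the inductive lower bound $B(n-k+t) \ge 0$ works \emph{against} you: it makes the right-hand side smaller, not larger. You acknowledge this implicitly when you pivot to the upper bound $B(m) \le H(m) \le h(m-k)$, but at that point the induction hypothesis is no longer doing any work, and what you are really attempting is a direct inequality $A(n-k) \ge \sum_{t:b_t=1} H(n-k+t)$. The estimates you cite do not close this gap: for $l \le k-s$ you would need the full $(k-s)$-step form of (\ref{h ineq}) together with a non-trivial comparison between $h(n-2k+s)$ and the residual terms, and for $l > k-s$ your ``iterate one more step'' plan is too vague to assess --- invoking Proposition~\ref{p2} does thin the index set, but a second unrolling of the recursion does not by itself produce the required cancellation without considerably more careful bookkeeping than you indicate.

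The paper avoids induction entirely. After the same substitution and the chain identity
\[
h(n-k) = (q-1)\sum_{t=1}^l h(n-k-t) + h(n-k-l) - \sum_{t=0}^{l-1} H(n-k-t),
\]
it bounds each $B(n-k+i)$ for $i$ with $b_i=1$ (other than the smallest such index) by $H(n-k+i) - H(n-k+\tilde\imath)$, where $\tilde\imath$ is the next smaller index with $b_{\tilde\imath}=1$; the key structural fact is that consecutive $1$'s in $\mathrm{cor}(w)$ are at most $k-s$ apart, since $b_{k-t(k-s)}=1$ for every admissible $t$. The resulting sum telescopes, the remaining boundary terms cancel against $\sum_{t=0}^{l-1} H(n-k-t)$, and the whole expression collapses to $h(n-k-l) - H(n-l) \ge 0$. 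This pairing-then-telescoping idea is precisely the mechanism missing from your proposal.
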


\begin{proof}
Rearranging relation (\ref{original h}) we obtain
\begin{align} \label{H rearranged}
&H(n) = h(n-k)-\sum_{t=1}^{k-1} b_t H(n-k+t) = h(n-k)-\sum_{i=1}^s b_i H(n-k+i).
\end{align}
Using (\ref{H rearranged}) one has
\begin{align} \label{H k-s}
\begin{split}
&H(n) - (q-1) \sum_{t=1}^l H(n-t) =h(n-k) - (q-1) \sum_{t=1}^l h(n-k-t)\\
&-\sum_{i=1}^s b_i H(n-k+i) + (q-1) \sum_{t=1}^l \sum_{i=1}^s b_i H(n-k+i-t)=\\
&h(n-k-l) - \sum_{t=0}^{l-1} H(n-k-t) \\
&- \sum_{i=1}^s b_i \left( H(n-k+i) - (q-1) \sum_{t=1}^l H(n-k+i-t) \right).
\end{split}
\end{align}

For any $i \in I-\{d\}$ let $\tilde{i} = \textrm{max } \{\iota < i: b_\iota = 1\}$.  Observe that $|i-\tilde{i}| \leq k-s$ since $b_{k-t(k-s)}=1$ always.  It follows that $\tilde{i}=i-\tau$ for some $1 \leq \tau \leq k-s$.  We have
\begin{align} \label{cancel}
\begin{split}
&\sum_{i=1}^s b_i \left( H(n-k+i) - (q-1) \sum_{t=1}^l H(n-k+i-t) \right) \leq\\
&\sum_{i \in I-\{d\}} \Big( H(n-k+i) - H(n-k+\tilde{i}) \Big) +H(n-k+d)-(q-1) \sum_{t=1}^l H(n-k+d-t)\\
&\le H(n-l) - \sum_{t=0}^{l-1} H(n-k-t).
\end{split}
\end{align}
where we have used the fact that $n \ge 2k+l$ to ensure that $n-k+i-t > k$ for every $i$, hence $H(n-k+i-t)>H(k)=-1$.  Combining (\ref{cancel}) and (\ref{H k-s}) one has
\begin{align} \label{H reduced}
\begin{split}
&H(n) - (q-1) \sum_{t=1}^l H(n-t) \geq\\
&h(n-k-l) - \sum_{t=0}^{l-1} H(n-k-t) - H(n-l) + \sum_{t=0}^{l-1} H(n-k-t) \geq\\
&h(n-k-l) - H(n-l) \geq 0.
\end{split}
\end{align}
\end{proof}

Let $\Delta(n)=h(n)-h'(n)$.

\begin{corollary} \label{c1}
Suppose $w'$ is such that cor$(w) \geq \textrm{cor}(w')$, $k=k'$, and $n \ge 3k$.  If $\Delta(n-t) \ge (q-1)\Delta(n-t-1)$ for $1 \leq t \leq k-1$ then $\Delta(n) \ge (q-1)\Delta(n-1)$.
\end{corollary}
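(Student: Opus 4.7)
The plan is to reduce the claim to showing $\Delta(n-1) \geq H(n) - H'(n)$ and then exploit the inductive hypothesis together with the structural facts from Section~3. First I rewrite $\alpha(n) := \Delta(n) - (q-1)\Delta(n-1)$. The elementary identity $h(n) - (q-1)h(n-1) = h(n-1) - H(n)$, which follows directly from $H(n) = qh(n-1) - h(n)$, applied to both $w$ and $w'$ yields the compact form
$$\alpha(n) = \Delta(n-1) - \bigl(H(n) - H'(n)\bigr),$$
so it suffices to prove $\Delta(n-1) \geq H(n) - H'(n)$. Alternatively, applying the full recurrence (\ref{h rec}) and using $k = k'$ gives the expanded form
$$\alpha(n) = \bigl[\Delta(n-1) - \Delta(n-k)\bigr] + \sum_{t=1}^{k-1}\bigl[b_t H(n-k+t) - b'_t H'(n-k+t)\bigr],$$
which exposes the two competing parts of the argument.

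For the bracketed difference I telescope $\Delta(n-1) - \Delta(n-k) = \sum_{t=1}^{k-1}\bigl(\Delta(n-t) - \Delta(n-t-1)\bigr)$ and use the substitution $\Delta(n-t) - \Delta(n-t-1) = \alpha(n-t) + (q-2)\Delta(n-t-1)$. By the hypothesis $\alpha(n-t) \geq 0$ for $1 \leq t \leq k-1$, this yields
$$\Delta(n-1) - \Delta(n-k) \geq (q-2)\sum_{j=2}^{k}\Delta(n-j),$$
which vanishes when $q = 2$ and is otherwise controlled by iterating the hypothesis $\Delta(n-t) \geq (q-1)\Delta(n-t-1)$ on intervals of constant sign.

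For the autocorrelation sum I invoke the results of Section~3. Because $n \geq 3k$, every argument $n-k+t$ exceeds $2k$, so Lemma~\ref{l1} provides the geometric growth $H(m) \geq (q-1)H(m-1)$ and Corollary~\ref{stuff} provides $H(m) \geq (q-1)h(m-k-1)$. The hypothesis $\textrm{cor}(w) \geq \textrm{cor}(w')$ combined with Propositions~\ref{p1},~\ref{p2} and Corollary~\ref{c0} constrains the relationship between the supports $\{t: b_t = 1\}$ and $\{t: b'_t = 1\}$---in particular forcing $s \geq s'$ and limiting the indices where $b'_t = 1$ but $b_t = 0$. Partitioning the sum by the joint pattern $(b_t, b'_t)$ and applying the geometric bounds allows one to dominate the unfavourable terms by the favourable ones.

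The main obstacle will be the final comparison step: arranging the geometric bounds so that they not only cover the unfavourable autocorrelation terms (where $b'_t > b_t$) but also compensate for the correction $(q-2)\sum_{j=2}^{k}\Delta(n-j)$ when some $\Delta(n-j)$ are negative. I expect the argument to split along the cases $s = s' > 0$, $s > s'$, and $s = s' = 0$, with the equal-$s$ case being the most delicate, since there the leading correlation digits of $w$ and $w'$ coincide and dominance must be extracted from the lower-order structure described by Proposition~\ref{p2}.
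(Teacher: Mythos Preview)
Your starting identities are correct, and you have correctly located the tools (Lemma~\ref{l1}, Corollary~\ref{stuff}, the structure results Propositions~\ref{p1}--\ref{p2}). However, the plan has a genuine gap at the point you yourself flag as ``the main obstacle'': you never say \emph{how} the favourable autocorrelation terms dominate the unfavourable ones, and your separate telescoping of $\Delta(n-1)-\Delta(n-k)$ into $(q-2)\sum_{j=2}^{k}\Delta(n-j)$ produces a term of indeterminate sign that does not mesh with the rest of the argument. A case split on $s$ versus $s'$ will not by itself organize the autocorrelation sum; the information you are missing is finer than ``$s\ge s'$''.

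The paper's proof hinges on an index you do not introduce: set
\[
r=\max\{j:\ b_j=1,\ b'_j=0\},
\]
which exists because $\mathrm{cor}(w)>\mathrm{cor}(w')$. The point of $r$ is that $b_i=b'_i$ for every $i>r$, so for those indices the mixed term $b_iH(n-k+i)-b'_iH'(n-k+i)$ collapses to $b_i\bigl(H(n-k+i)-H'(n-k+i)\bigr)=b_i\bigl(q\Delta(n-k+i-1)-\Delta(n-k+i)\bigr)$, i.e.\ it becomes a $\Delta$-expression to which the inductive hypothesis applies directly. For $i\le r$ one has $b_r=1$, $b'_r=0$, and Lemma~\ref{l1} with $l=r-1$ gives $H(n-k+r)\ge(q-1)\sum_{i=1}^{r-1}H(n-k+i)$, which is exactly enough to bound $\sum_{i=1}^{r}b_iH(n-k+i)-\sum_{i=1}^{r-1}b'_iH'(n-k+i)$ from below by $\sum_{i=1}^{r-1}\bigl(H(n-k+i)-H'(n-k+i)\bigr)=\sum_{i=1}^{r-1}\bigl(q\Delta(n-k+i-1)-\Delta(n-k+i)\bigr)$. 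Now \emph{everything} is of the form $q\Delta(m-1)-\Delta(m)$ and telescopes against $q\Delta(n-1)-\Delta(n-k)$ via the inductive hypothesis, with case analysis only on whether $r=1$, $r=k-1$, or $1<r<k-1$. Your decomposition, by contrast, keeps $\Delta(n-1)-\Delta(n-k)$ and the autocorrelation sum apart, and without the $r$-split there is no mechanism that forces cancellation between them.
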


\begin{proof}
There are three cases.  In the first $1<r<k-1$, in the second $r=k-1$, and in the third $r=1$.  In the first case, using (\ref{h rec}) one has

\begin{align} \label{D}
\begin{split}
&\Delta(n)=q \Delta(n-1)-\Delta(n-k)+\sum_{i=r+1}^{k-1} b_i [q \Delta(n-k+i-1)-\Delta(n-k+i) ]\\
&+\sum_{i=1}^r b_i H(n-k+i)-\sum_{i=1}^{r-1} b_i' H'(n-k+i)
\end{split}
\end{align}

Using the equality $H(n)-H'(n) = q\Delta(n-1)-\Delta(n)$ and applying Lemma 1 one has
\begin{align*}
&\sum_{i=1}^r b_i H(n-k+i)-\sum_{i=1}^{r-1} b'_i H'(n-k+i)\\
&\ge \sum_{i=1}^{r-1} H(n-k+i) - \sum_{i=1}^{r-1} H'(n-k+i)\\
&= \sum_{i=1}^{r-1} q \Delta(n-k+i-1) - \Delta(n-k+i) 
\end{align*}
The expression (\ref{D}) is thus bounded below by

\begin{align*}
&q \Delta(n-1)-\Delta(n-k) + \sum_{i=r+1}^{k-1} b_i [ q \Delta(n-k+i-1)-\Delta(n-k+i)\\
&+\sum_{i=1}^{r-1} [q \Delta(n-k+i-1)-\Delta(n-k+i)].
\end{align*}
By using the inductive assumption it is easy to see that $\Delta(n-1) + \sum_{i=r+1}^{k-1} b_i[ g \Delta(n-k+i-1)-\Delta(n-k+i) ] \geq (q-1) \Delta(n-k+r)$ and $\Delta(n-k+r) + \sum_{i=1}^{r-1} [ q \Delta(n-k+i-1)-\Delta(n-k+i) ] \geq (q-1) \Delta(n-k)$.  (For a more detailed explanation, see \cite{M}).  Applying both bounds, we have $\Delta(n) \geq (q-1) \Delta(n-1) - \Delta(n-k)+(q-1) \Delta(n-k) \geq (q-1) \Delta(n-1)$.\\

Suppose now that $r=k-1$, hence $s=r=k-1$ and $b_i=1$ $\forall i$.  We have
\begin{align} \label{r=k-1}
\begin{split}
&\Delta(n) = q \Delta(n-1) - \Delta(n-k) + H(n-1)+\sum_{t=1}^{k-2} H(n-k-i)-b'_i H'(n-k-i)\\
&\geq q \Delta(n-1) - \Delta(n-k) + H(n-1) + \sum_{t=1}^{k-2} [q \Delta(n-k+i-1)-\Delta(n-k+i)].
\end{split}
\end{align}
Noting that $H(n-1)>0$ since $n \geq 3k$, by subtracting $H'(n-1)$ from (\ref{r=k-1}) we obtain
\begin{align*}
\begin{split}
&\Delta(n) \geq (q-1) \Delta(n-1)-\Delta(n-k)\\
&+\Delta(n-1) + \sum_{t=1}^{k-2} [q \Delta(n-k+t-1)-\Delta(n-k+t)].
\end{split}
\end{align*}
Using $\Delta(n-1)>\Delta(n-2)$ as before we have that
\begin{align*}
&\Delta(n) > (q-1) \Delta(n-1)-\Delta(n-k)+(q-1) \Delta(n-k) \\
&\geq (q-1) \Delta(n-1).
\end{align*}

Finally suppose that $r=1$.  Then 
\begin{align*}
&\Delta(n) = q \Delta(n-1) - \Delta(n-k) + \sum_{t=2}^{k=1} b_t [q \Delta(n-k+t-1)-\Delta(n-k+t)] + H(n-k+1)\\
&\geq (q-1) \Delta(n-1) - \Delta(n-k) + (q-1) \Delta(n-k+1)\\
&\geq (q-1) \Delta(n-1).
\end{align*}
\end{proof}

\begin{corollary} \label{c2}
Let $n \ge 4k$.  Then $\sum_{t=0}^{k-d-1} H(n-k-t) \ge \sum_{i \in I} \sum_{t=1}^{k-i} b_t H(n-2k+i+t)$
\end{corollary}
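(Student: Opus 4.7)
The approach is to rewrite the right-hand side as a sum over pairs $(i,j) \in I \times B$, where $B := \{j \in [1, k-1] : b_j = 1\} = S \cup (I - \{s\})$ by Corollary \ref{c0}, and then dominate the resulting coefficient comparison using the fast growth of $H$ from Lemma \ref{l1}.

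The first step is the direct substitution
\[
\sum_{t=1}^{k-i} b_t H(n-2k+i+t) = \sum_{j \in B,\ j \le k-i} H(n-2k+i+j),
\]
obtained by noting that $b_t$ vanishes outside $B$ in the range $1 \le t \le k-1$. Summing over $i \in I$ and regrouping by $p := i+j$, the inequality to be proved takes the form
\[
\sum_{p=d+1}^{k} H(n-2k+p) \ge \sum_{p=d+1}^{\min(2s,\,k)} c_p\, H(n-2k+p), \qquad c_p := |\{(i,j) \in I \times B : i+j = p\}|.
\]
Here the LHS coefficient is $1$ for each $p \in [d+1, k]$, while the RHS coefficient $c_p$ counts multiplicities in the sumset $I+B$ and can exceed $1$ at interior $p$.

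The second step is to apply Lemma \ref{l1} with $l = k-d-1$ to the extremal term $H(n-k)$; this is admissible since $n \ge 4k$ implies $n-k \ge 2k + l$. The resulting bound $H(n-k) \ge (q-1) \sum_{t=1}^{k-d-1} H(n-k-t)$ combines with the remaining LHS terms to give
\[
\sum_{t=0}^{k-d-1} H(n-k-t) \ge q \sum_{p=d+1}^{k-1} H(n-2k+p),
\]
which already dominates the RHS pointwise whenever $c_p \le q$ for each $p \le k-1$ and $c_k = 0$. For denser $I$ in which some $c_p$ exceed $q$, one iteratively reapplies Lemma \ref{l1} to the next-largest terms $H(n-k-1), H(n-k-2), \ldots$ (still valid for $n \ge 4k$), producing successively larger lower bounds on the LHS. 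Residual discrepancies are then absorbed by the monotonicity of $H$ (the $l=1$ case of Lemma \ref{l1}, which gives $H(m) \ge (q-1) H(m-1) \ge H(m-1)$) in an Abel-summation fashion.

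The main obstacle is this final verification: once $|I| > q$ the multiplicities $c_p$ can exceed $q$ at small $p$, and one must check that iterated applications of Lemma \ref{l1} together with monotonicity are enough to close the bound. What makes this go through is the combinatorial structure of $(I, B)$ provided by Propositions \ref{p1} and \ref{p2}: in particular the inclusion $I - \{s\} \subset [1, k-s-2]$ bounds the spread of $I$, and Proposition \ref{p1}'s constraint $i + T(i) < k-s$ for $i \in I - \{s\}$ restricts how distinct pairs can collide under addition. Together with Corollary \ref{c0}, these prevent the $c_p$ from being concentrated in a way that the iterated Lemma \ref{l1} slack cannot cover.
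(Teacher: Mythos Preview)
Your proposal is a plan rather than a proof, and the gap you yourself flag is genuine and unresolved. After regrouping by $p=i+j$ and obtaining $\sum_{t=0}^{k-d-1}H(n-k-t)\ge q\sum_{p=d+1}^{k-1}H(n-2k+p)$, you need $c_p\le q$ for $p\le k-1$ and $c_k=0$. Neither holds in general: $c_k=1$ whenever $b_{k-s}=1$ (take $i=s$, $j=k-s$), and $c_p$ can be as large as $|I|$, which may reach $k-s-1\gg q$. Your remedy---``iteratively reapply Lemma~\ref{l1}'' and absorb residuals ``in an Abel-summation fashion''---is asserted, not carried out; no concrete inequality chain is produced, and invoking Propositions~\ref{p1} and~\ref{p2} as structural constraints does not by itself bound the excess $\sum_p(c_p-q)^+H(n-2k+p)$.

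The paper's argument avoids the multiplicity problem altogether by collapsing each inner sum rather than expanding it. For every $i\in I-\{s\}$ it applies Lemma~\ref{l1} (with $l=s$, valid since $n\ge 4k$) to get
\[
\sum_{t=1}^{k-i}b_tH(n-2k+i+t)=\sum_{t=1}^{s}b_tH(n-2k+i+t)\le H(n-2k+i+s+1),
\]
producing \emph{one} $H$-term per $i$ at the distinct indices $i+s+1$, all lying in $[d+s+1,k-1]$ (or $[d+s+1,k-2]$, depending on the case). The remaining $i=s$ contribution $\sum_{t=1}^{k-s}b_tH(n-2k+s+t)$ is then controlled by Proposition~\ref{p2}, which guarantees that at least one of $b_{k-s},b_{k-s-1}$ vanishes; this makes the $i=s$ term bounded by at most $H(n-k)+H(n-k-1)$ (or just $H(n-k)$ in the second case). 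One then has a one-to-one injection of right-hand terms into distinct left-hand terms $H(n-2k+p)$, $p\in[d+1,k]$, and the inequality follows immediately with no coefficient bookkeeping. This is the step your plan is missing: use Lemma~\ref{l1} to \emph{collapse} each $i\ne s$ sum to a single term, rather than to amplify the left-hand side.
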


\begin{proof}
For $i \in I- \{s\}$, applying Lemma 1 we have
\begin{align*}
    &\sum_{t=1}^{k-i} b_t H(n-2k+i+t) = \sum_{t=1}^s b_t H(n-2k+i+t)\\
    &\le H(n-2k+i+s+1).
\end{align*}

Note that $I-\{s\} \subset \{1, \dots, k-s-2 \}$.  If $b_{k-s-2}=1$ and $b_{k-s}=1$ or $b_{k-s-1}=1$ then $I=\{s\}$ and the statement of the lemma holds.  It suffices to assume that either $b_{k-s-2}=0$ or $b_{k-s}=b_{k-s-1}=0$.  

If $b_{k-s-2}=0$ then $s \ne k-1$ and for $i \in I$ one has $i+s+1 \le k-s-2$.  Since it always true that one of $b_{k-s}$ or $b_{k-s-1}=0$ when $s \ne k-1$ (see Propostion 2) one must have
\begin{align*}
-H(n-k)-H(n-k-1)+\sum_{t=1}^{k-s} b_t H(n-2k+s+t) \le 0.
\end{align*}
Since $i+s+1 \le k-s-2$ for $i \in I-\{s\}$ we thus have
\begin{align*}
    &-\sum_{t=0}^{k-d-1} H(n-k-t) + \sum_{i \in I} \sum_{t=1}^{k-i} b_t H(n-2k+i+t)\\
    &\le -H(n-k)-H(n-k-1) + \sum_{t=1}^{k-s} b_t H(n-2k+s+t)\\
    &\le 0.
\end{align*}

If both $b_{k-s}=0$ and $b_{k-s-1}=0$ then 
\begin{align*}
-H(n-k)+\sum_{t=1}^{k-s} b_t H(n-2k+s+t) \le 0.
\end{align*}
For $i \in I$ one has $i+s+1 \le k-s-1$ and
\begin{align*}
    &-\sum_{t=0}^{k-d-1} H(n-k-t) + \sum_{i \in I} \sum_{t=1}^{k-i} b_t H(n-2k+i+t)\\
    &\le -H(n-k) + \sum_{t=1}^{k-s} b_t H(n-2k+s+t)\\
    &\le 0.
\end{align*}

\end{proof}

\begin{corollary} \label{c3}
Let $n > 2k$.  Then $H(n) \ge \sum_{i \in I} H(n-k+i)$.
\end{corollary}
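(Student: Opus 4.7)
The plan is to start from equation (\ref{H rearranged}) and use Corollary \ref{c0} to rewrite the sum. Since $\{i : b_i = 1, 1 \le i \le k-1\}$ decomposes as the disjoint union $S \cup (I - \{s\})$, and $s \in S$, the target inequality $H(n) \ge \sum_{i \in I} H(n-k+i)$ rearranges, after expanding $H(n) = h(n-k) - \sum_{i=1}^s b_i H(n-k+i)$, into the equivalent statement
\begin{align*}
h(n-k) \ge 2 \sum_{i \in I} H(n-k+i) + \sum_{i \in S - \{s\}} H(n-k+i).
\end{align*}
The case $s = 0$ is trivial, since then $I = \emptyset$ and $H(n) \ge 0$ for $n > k$, so I assume $s > 0$.

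The main step is an application of Lemma \ref{l1} in the regime where $n \ge 3k - d$, with $d = \min I$. Choosing $l = k - d$, Lemma \ref{l1} yields
\begin{align*}
H(n) \ge (q-1) \sum_{t=1}^{k-d} H(n-t) \ge \sum_{t=1}^{k-d} H(n-t).
\end{align*}
For each $i \in I$ we have $d \le i \le s$, so $k - i \in [k-s, k-d]$ and hence the index $n - k + i = n - (k - i)$ appears among the $\{n - t : 1 \le t \le k - d\}$. Because each $n - t > k$ forces $H(n-t) \ge 0$ via (\ref{H vals}), discarding the nonnegative omitted terms gives $H(n) \ge \sum_{i \in I} H(n-k+i)$ directly, bypassing the rearrangement above.

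For the residual short range $2k < n < 3k - d$, Lemma \ref{l1} is too weak, since for the smaller $i \in I$ the indices $n - k + i$ lie in $(k, 2k)$ where the recursive bound is not available. In this interval $H$ is governed by (\ref{H vals}), and $H(n-k+i) = 1$ precisely when $3k - n - i \in I$ and is $0$ otherwise. The plan here is to exploit the structural restrictions from Propositions \ref{p1} and \ref{p2}, namely $I - \{s\} \subseteq \{1, \dots, k-s-2\}$ together with the parity-like dichotomy on $\{b_{t(k-s)}\}$ and $\{b_{t(k-s)-1}\}$, in order to bound the number of indices $i \in I$ for which $H(n-k+i) = 1$.

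The principal obstacle is precisely this second range. The argument I anticipate mirrors the structure of the proof of Corollary \ref{stuff}: split cases according to whether $b_{k-s} = 1$ or $b_{k-s-1} = 1$, apply Proposition \ref{p1} to argue that any co-occurring pair $(i, 3k - n - i) \in I \times I$ would force a new element into $I$ whose presence is forbidden by the bound $i + T(i) < k - s$, and use (\ref{H vals}) term-by-term to show that the small contributions are absorbed by $H(n)$. Once this case analysis is in place, the inequality holds on the full range $n > 2k$.
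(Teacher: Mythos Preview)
Your treatment of $n \ge 3k - d$ via Lemma~\ref{l1} with $l = k - d$ is correct and clean: each $n - k + i$ with $i \in I$ equals $n - (k - i)$ with $1 \le k - i \le k - d$, so the desired terms form a subset of $\{H(n-t) : 1 \le t \le k - d\}$, the remaining terms are nonnegative since $n-t>k$, and the inequality drops out. This inclusion argument is in fact tidier than the paper's second case, which applies Lemma~\ref{l1} with $l = k - s$ and then needs the extra comparison $\sum_{i \in I} H(n-k+i) \le \sum_{t=1}^{k-s} H(n-t)$.

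The genuine gap is the residual range $2k < n < 3k - d$, which you leave as a plan rather than a proof, and the plan as stated does not cover the whole interval. You assert that each $H(n-k+i)$ is determined by (\ref{H vals}); this needs $n - k + i < 2k$, and for $i = s$ that forces $n < 3k - s$. Whenever $|I| \ge 2$ one has $d < s$, so your residual interval strictly contains $[3k-s,\,3k-d)$, and on that piece $H(n-k+s)$ is no longer governed by (\ref{H vals}) and can exceed $1$; the counting scheme you outline therefore breaks. The paper avoids this by splitting at $3k - s$ instead of $3k - d$: on $(2k,\,3k-s)$ every argument $n-k+i$ with $i \in I$ genuinely lies in $(k,\,2k)$, so only the $i=s$ term can be nonzero and it is at most $1\le H(n)$; the range $n \ge 3k - s$ is then handled by Lemma~\ref{l1} with $l = k - s$ together with $|I|\le k-s$. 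Either move your split point down to $3k - s$ and carry out the short-range count, or supply a separate argument for $3k - s \le n < 3k - d$.
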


\begin{proof}
Let $2k<n<3k-s$.  For $i \ne s$ observe that $n-k+i < n-2k+s < k$, hence $H(n-k+i)=0$.  It follows that
\begin{align*}
    H(n)-\sum_{i \in I} H(n-k+i) = H(n)-H(n-k+s) \ge H(n)-1 \ge 0.
\end{align*}
Note that $|I| \le k-s$ and recall $s \le k-1$.  For $n \ge 3k-s$ one has
\begin{align*}
    H(n) - \sum_{i \in I} H(n-k+i) \ge H(n)-\sum_{i=1}^{k-s} H(n-i) \ge 0
\end{align*}
by application of Lemma 1.    
\end{proof}

\begin{corollary} \label{c4}
Let $n \ge 2k$.  Then $h(n) \ge \sum_{t=1}^{k-1} H(n+t)$.
\end{corollary}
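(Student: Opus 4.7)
The plan is to start from identity (\ref{original h}), $h(n) = \sum_{t=1}^{k} b_t H(n+t)$. Since $b_k = 1$ by the definition of autocorrelation (for $i = k$ the condition $w_j = w_j$ is trivial), separating the $t=k$ term gives
$$h(n) - \sum_{t=1}^{k-1} H(n+t) = H(n+k) - \sum_{t=1}^{k-1} (1 - b_t)\, H(n+t).$$
For $n \ge 2k$ and $1 \le t \le k-1$ we have $n+t > k$, so by (\ref{H vals}) every $H(n+t) \ge 0$; combined with $0 \le 1 - b_t \le 1$, it therefore suffices to prove the stronger bound $H(n+k) \ge \sum_{t=1}^{k-1} H(n+t)$.

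When $s > 0$, this follows immediately from Lemma \ref{l1}. Applying it with $l = k-1$ at the index $n+k$ (the hypothesis $n+k \ge 2k + (k-1)$ reads $n \ge 2k-1$, weaker than $n \ge 2k$) yields
$$H(n+k) \ge (q-1) \sum_{t=1}^{k-1} H(n+k-t) = (q-1) \sum_{u=1}^{k-1} H(n+u),$$
and $q \ge 2$ ensures the coefficient $(q-1) \ge 1$, finishing this case.

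The case $s = 0$ is the main obstacle, since Lemma \ref{l1} explicitly excludes it. However, when $s = 0$ one has $b_t = 0$ for every $1 \le t \le k-1$, so (\ref{original h}) collapses to $h(m) = H(m+k)$, equivalently $H(n+t) = h(n+t-k)$ for $t \ge 1$ (valid since $n+t-k \ge k+1$). The target inequality reduces to $h(n) \ge \sum_{j = n-k+1}^{n-1} h(j)$, which I would prove by induction on $n$ using the recursion (\ref{h rec}), which for $s = 0$ simplifies to $h(n) = q h(n-1) - h(n-k)$. Substituting yields
$$h(n) - \sum_{j=n-k+1}^{n-1} h(j) = (q-1)\, h(n-1) - \sum_{j=n-k}^{n-2} h(j),$$
and the inductive hypothesis at $n-1$ together with $q \ge 2$ make this non-negative. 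The base case $n = 2k$ is a direct computation: for $s = 0$ one has $h(k+j) = q^j$ for $0 \le j \le k-1$ and $h(2k) = q^k - 1$, and $q^k - 1 \ge (q^k - q)/(q-1) = \sum_{j=1}^{k-1} q^j$ is equivalent to $q^k(q-2) + 1 \ge 0$, which holds for all $q \ge 2$.
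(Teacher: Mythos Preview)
Your proof is correct. For $s>0$ you and the paper both reduce to a single application of Lemma~\ref{l1}; the paper passes through the intermediate set $S\cup\{k\}$ before invoking the lemma, whereas you go straight to the sufficient inequality $H(n+k)\ge\sum_{t=1}^{k-1}H(n+t)$ and apply Lemma~\ref{l1} with $l=k-1$ in one shot. The two arguments are essentially the same.

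Your treatment of $s=0$ is an improvement over the paper's presentation: Lemma~\ref{l1} is stated only for $s>0$, so the paper's one-line proof does not literally cover the case $s=0$. Your direct induction via $h(n)=qh(n-1)-h(n-k)$ closes that gap cleanly. (In the paper's later applications of this corollary the case $s=0$ does not actually arise, but the corollary is stated without restriction on $s$, so your extra care is warranted.)
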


\begin{proof}
Applying Lemma 1, one has
\begin{align*}
    &h(n) = \sum_{t=1}^k b_t H(n+t) \ge \sum_{t \in S \cup \{k\}} H(n+t) \ge \sum_{t=1}^{k-1} H(n+t).
\end{align*}
\end{proof}

Let $S = \{ \iota : [\iota]=s \}$.  If $\iota \in S-\{s\}$ then
\begin{align*}
&h(n-2k+\iota) - \sum_{j=1}^s b_j H(n-2k+\iota+j) - H\big(n-2k+s+\iota+(k-s)) = 0.
\end{align*}
It follows that
\begin{align*}
&\sum_{\iota \in S-\{s\}} \left( h(n-2k+\iota) - \sum_{j=1}^s b_j H(n-2k+\iota+j) \right)\\
&+h(n-2k+s) - \sum_{j=1}^s b_j H(n-2k+s+j)\\
&=\sum_{\iota \in S-\{s\}} \left( h(n-2k+\iota)-\sum_{j=1}^s b_j H(n-2k+\iota+j) - H\big(n-2k+s+\iota+(k-s) \big) \right)\\
&+h(n-2k+s)- \sum_{j=1}^{k-s} b_j H(n-2k+s+j).
\end{align*}

From this one has
\begin{align} \label{cancellation of non-prime periods}
\begin{split}
&\sum_{\iota=1}^{k-1} b_i H(n-k+\iota) = \sum_{\iota \in S} H(n-k+\iota) + \sum_{\iota \in I-\{s\}} H(n-k+\iota)=\\
&\sum_{\iota \in S} \left( h(n-2k+\iota) - \sum_{t=1}^{k-1} b_t H(n-2k+\iota+t) \right)+\\
&\sum_{\iota \in I-\{s\}} \left( h(n-2k+\iota) - \sum_{t=1}^{k-1} b_t H(n-2k+\iota+t) \right)=\\
&h(n-2k+s) - \sum_{t=1}^{k-s} H(n-2k+s+t) \\
&+\sum_{\iota \in I-\{s\}} \left( h(n-2k+\iota) - \sum_{t=1}^{k-\iota} b_t H(n-2k+\iota+t) \right).
\end{split}
\end{align}

\begin{lemma} \label{l2}
If $k=k'$ and $s \ne s'$ then $\Delta(n) \le 0$ for $n < 3k-s$.  If $s = s'$ then $\Delta(n) \le 0$ for $n < 4k$.
\end{lemma}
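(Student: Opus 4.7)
The plan is to establish $\Delta(n) \le 0$ in the stated ranges by a two-stage argument: direct enumeration for $n \le 2k$, followed by induction using the recursion (\ref{h rec}) for $n \ge 2k$. I expect the main obstacle to lie in the careful bookkeeping of the positive $H$-contributions on the $w$ and $w'$ sides, particularly in the case $s=s'$, where one must push the inductive step roughly twice as far.

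For $n \le k$ the claim is trivial: $h(n)=h'(n)=0$ for $n<k$ and $h(k)=h'(k)=1$. For $k<n<2k$, I would iterate the identity $h(n)=q h(n-1)-H(n)$ (a rearrangement of $H(n)=q h(n-1)-h(n)$) starting from $h(k)=1$ to obtain
\begin{align*}
h(k+j) = q^{j} - \sum_{i=1}^{j} q^{\,j-i}\, H(k+i), \qquad 0 \le j < k,
\end{align*}
and then use (\ref{H vals}) to rewrite the sum as $\sum_{\iota \in I,\, \iota \ge k-j} q^{\,j+\iota-k}$. Subtracting the analogous formula for $h'$ yields
\begin{align*}
\Delta(k+j) = \sum_{\iota' \in I',\, \iota' \ge k-j} q^{\,j+\iota'-k} \;-\; \sum_{\iota \in I,\, \iota \ge k-j} q^{\,j+\iota-k}.
\end{align*}
When $s>s'$, the $\iota=s$ term enters the $w$-side sum as soon as $j \ge k-s$, and its size $q^{\,j+s-k}$ dominates the entire $w'$-side sum (whose indices are all at most $s'<s$), forcing $\Delta(k+j) \le 0$. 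When $s=s'$ the $s$-terms cancel, and a short argument using (\ref{cor per}) shows that under the hypothesis $\textrm{cor}(w)>\textrm{cor}(w')$ the largest element of the symmetric difference $I \triangle I'$ must lie in $I\setminus I'$, so $\Delta \le 0$ here as well.

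For $n \ge 2k$, I would run an induction on $n$ using
\begin{align*}
\Delta(n) = q\Delta(n-1) - \Delta(n-k) + \sum_{t=1}^{k-1}\bigl[b_t\, H(n-k+t) - b'_t\, H'(n-k+t)\bigr].
\end{align*}
In the range $2k \le n < 3k-s$ (case $s>s'$), each index $n-k+t$ still lies in the regime where (\ref{H vals}) gives $H$ and $H'$ explicitly, and by Proposition \ref{p1} one has $I-\{s\} \subset \{1,\ldots,k-s-2\}$. Combining the inductive hypotheses $\Delta(n-1), \Delta(n-k) \le 0$ with a termwise comparison of the correction sums (using $s>s'$) yields $\Delta(n) \le 0$. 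The boundary $n=3k-s$ is precisely where the $\iota=s$ periodicity first feeds the value $H(2k)$ into the sum on the $w$ side, where the balance can be upset.

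When $s=s'$ the asymmetry between $I$ and $I'$ lies in bits strictly below $s$, so the $s$-driven positive $H$-contributions cancel between the two sides. By Proposition \ref{p1} the next asymmetric contribution comes from some $\iota \in I-\{s\}$ inside $\{1,\ldots,k-s-2\}$, which does not feed the inductive step until $n$ is correspondingly larger, letting the argument extend to $n<4k$. The main technical burden is verifying, with the aid of Propositions \ref{p1}, \ref{p2} and Corollary \ref{c0}, that throughout $[2k,4k)$ every positive $H$-value on the $w$ side at level $\iota<s$ is matched or dominated by a corresponding $w'$ term; the degenerate cases $(k-s)\mid k$ and $s=k-1$ shift the effective boundaries slightly and require separate handling. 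This kind of detailed case work appears to be exactly what the paper defers to the Appendix.
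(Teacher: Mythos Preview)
Your treatment of $n\le 2k$ is correct and matches the paper's remark that $\Delta(n)\le 0$ there. The gaps appear in the inductive step for $n>2k$.

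First, in your recursion $\Delta(n)=q\Delta(n-1)-\Delta(n-k)+[\text{correction}]$ the term $-\Delta(n-k)$ has the \emph{wrong} sign for a naive appeal to the inductive hypothesis: if $\Delta(n-k)\le 0$ then $-\Delta(n-k)\ge 0$, which hurts rather than helps. In fact, for $2k\le n<3k-s$ one has $\Delta(n-k)=0$ (since $h(m)=h'(m)=q^{m-k}$ for $k\le m<2k-s$), so the term is harmless; but this must be argued, not swept under ``inductive hypotheses $\Delta(n-1),\Delta(n-k)\le 0$.''

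Second, the correction sum is not $\le 0$ by any termwise comparison. The paper works instead with the equivalent one-step form $\Delta(n)=q\Delta(n-1)-(H(n)-H'(n))$ and gives a \emph{lower} bound on $H(n)-H'(n)$, which can be negative. For $s>s'$ it splits into two subcases: either $s'=s-1$ with $b_t=0$, $b'_t=1$ for all $t<s$ (where $H(n)-H'(n)\ge -1$ on $[3k-2s,3k-s)$ and $\ge 0$ before), or there exists $\tau<s$ with $b_\tau=b'_\tau$ (where $H(n)-H'(n)\ge -(n-2k)$ but also $\Delta(2k)\le -2$). In each subcase the exponential term $q\Delta(n-1)$ absorbs the positive correction; a pure sign comparison does not close the argument.

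Third, and most seriously, for $s=s'$ your ``matching/domination'' sketch does not reflect what the paper does, and I do not see how to make it work: the positive $H$-contributions on the two sides are not in any natural bijection, and the target $4k$ does not fall out of $r\le k-s-2$ by the kind of counting you describe. The paper's argument (the two appendices) is structurally different. Appendix~1 shows that, for each index $i<r$, $\Delta(n)$ is monotone non-increasing in $b_i$ and monotone non-decreasing in $b'_i$ throughout $2k\le n<4k$. Since $b_j=b'_j$ for all $j>r$ when $s=s'$, this reduces the problem to a single extremal pair: on the $w$ side take $c_r=1$, $c_t=0$ for $t<r$ (the sequence $h^r$), and on the $w'$ side take $c_r=0$, $c_t=1$ for $t<r$ (the sequence $h_r$). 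Appendix~2 then verifies $h^r(n)-h_r(n)\le 0$ for $n<4k$ by a long explicit estimate. This monotonicity-in-the-bits reduction is the missing idea; without it one is left comparing arbitrary $I,I'$ sharing the same $s$, and the combinatorics does not collapse.
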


The proof of Lemma \ref{l2} when $s=s'$ is divided into two parts which constitute the appendices below.  We include the proof when $s \ne s'$ here.

\begin{proof}
We remark that $\Delta(n) \le 0$ for $n \le 2k$ and $\Delta(n) = 0$ for $n < 2k-r$ no matter the values of $s$ and $s'$.

Using (\ref{H rearranged}) one has 
\begin{align} \label{delta H}
\begin{split}
H(n)-H'(n) &= \Delta(n-k) - \sum_{i=1}^s b_i H(n-k+i)- b'_i H'(n-k+i).
\end{split}
\end{align}

Suppose that $r=s$.  There are two cases;  $b_t-b'_t=-1$ for $1 \le t \le s-1$, or $b_\tau-b'_\tau=0$ for some $\tau<s$.  In the first case note that $s'=s-1$, and observe that one must have $s \le k-s$.  Otherwise $s > s-(k-s) \ge 1$ and since $s-(k-s)=k-2(k-s)$ one has $b_{k-2(k-s)}-b'_{k-2(k-s)} \ge 0$, a contradiction.

From (\ref{delta H}) for $n<3k-s$ one has
\begin{align*}
H(n)-H'(n) &= -H(n-k+s) + \sum_{t=1}^{s-1} H'(n-k+t) \ge -H(n-k+s).
\end{align*}
It is thus easy to see that
\begin{align*}
&H(n)-H'(n) \ge \begin{cases} 0 & 2k < n < 3k-2s \\ -1 & 3k-2s \le n < 3k-s.\end{cases}
\end{align*}

Since $\Delta(2k) \le -1$, using the relation
\begin{align*}
\Delta(n) = q\Delta(n-1)-H(n)+H'(n)
\end{align*}
one has
\begin{align*}
&\Delta(n) \le \begin{cases} q \Delta(n-1) &  2k < n < 3k-2s \\q \Delta(n-1)+1 & 3k-2s \le n < 3k-s. \end{cases}
\end{align*}
It follows that $\Delta(n) \le 0$ for $n<3k-s$.\\

We now suppose that $b_\tau-b'_\tau=0$ for some $1<\tau<s$.  One has
\begin{align} \label{estimate delta 2k}
&\Delta(2k) \le -q^s+\sum_{t=1}^s q^{s-t} - q^{s-\tau} \le -2.
\end{align}
From (\ref{delta H}) one has
\begin{align*}
H(n)-H'(n) &\ge -\sum_{i=1}^s b_i H(n-k+i),
\end{align*}
from which we obtain the upper bound
\begin{align*}
&H(n)-H'(n) \ge -(n-2k).
\end{align*}
It follows that
\begin{align} \label{bound r=s 2}
&\Delta(n) \le q\Delta(n-1)+(n-2k).
\end{align}
The inequality (\ref{bound r=s 2}) together with (\ref{estimate delta 2k}) implies
\begin{align*}
\Delta(n) \le -2-(n-2k) \textrm{ for } 2k<n<3k-s
\end{align*}
and the result follows.
\end{proof}

\begin{lemma} \label{l3}
Let $k=k'$ and $N \ge 4k$.  Suppose $h(N) > h'(N)$ and that $h(n) \leq h'(n)$ for $n<N$.  Then $H(n)-H'(n) \le 0$ for $N \leq n \leq N+k$;  In particular $\Delta(n) > (q-1) \Delta(n-1)$ for $N \leq n \leq N+k$.
\end{lemma}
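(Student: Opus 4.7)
The plan is to prove the first claim $H(n) - H'(n) \le 0$ on $N \le n \le N+k$ by strong induction on $n$; the second claim follows since the algebraic identity $H(n) - H'(n) = q\Delta(n-1) - \Delta(n)$ rearranges $H(n) - H'(n) \le 0$ into $\Delta(n) \ge q\Delta(n-1)$. To convert this into $\Delta(n) > (q-1)\Delta(n-1)$, a simultaneous induction shows $\Delta(n) > 0$ throughout the range: the base case $\Delta(N) > 0$ is given, and the inductive step $\Delta(n) \ge q\Delta(n-1) > 0$ follows once $H(n) - H'(n) \le 0$ is known.

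For the base case $n = N$, the identity together with $\Delta(N-1) \le 0 < \Delta(N)$ yields $H(N) - H'(N) < 0$ at once, and $\Delta(N) > 0 \ge (q-1)\Delta(N-1)$ is immediate.

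For the inductive step $N < n \le N+k$, I would apply (\ref{H rearranged}) to write
\begin{align*}
H(n) - H'(n) = \Delta(n-k) - \sum_{i=1}^{k-1} \bigl[ b_i H(n-k+i) - b'_i H'(n-k+i) \bigr],
\end{align*}
and split the sum according to whether $n-k+i < N$ or $n-k+i \ge N$. On the indices with $n-k+i < N$, the hypothesis $\Delta(m) \le 0$ is in force; after using (\ref{cancellation of non-prime periods}) to re-express the sum in terms of $h$'s and applying Corollaries \ref{c2} and \ref{c3}, those terms yield the right sign. On the indices with $n-k+i \ge N$, the inner inductive hypothesis gives $H(n-k+i) - H'(n-k+i) \le 0$; combined with $\mathrm{cor}(w) > \mathrm{cor}(w')$ (so that $b_r = 1$, $b'_r = 0$ at the distinguishing index $r$, and the pattern of $b_i$'s dominates that of $b'_i$'s at the high end), the total contribution of these terms has the correct sign. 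For $n < N+k$ one has $\Delta(n-k) \le 0$ directly, and the induction closes.

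The principal obstacle is the boundary case $n = N+k$, where $\Delta(n-k) = \Delta(N)$ is strictly positive and must be absorbed by the sum. The sum $\sum_{i=1}^{k-1}[b_i H(N+i) - b'_i H'(N+i)]$ ranges over precisely those indices for which the inner inductive hypothesis supplies $H(m) - H'(m) \le 0$, so the naive pointwise sign is the wrong way around. To absorb $\Delta(N)$, I would re-expand it via the recurrence (\ref{h rec}) at time $N$, writing $\Delta(N)$ in terms of $\Delta(N-1), \Delta(N-k)$ (both $\le 0$) and earlier $H-H'$ differences, and match the resulting expression against the sum at time $N+k$ using the same structural identity (\ref{cancellation of non-prime periods}) together with Lemma \ref{l1}. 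The expectation is that the $b_r$-versus-$b'_r$ asymmetry that produced $\Delta(N) > 0$ reappears in the sum at time $N+k$ and provides an equal or larger amount of cancellation. This bookkeeping, rather than any single sharp estimate, is the main technical challenge.
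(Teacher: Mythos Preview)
Your reduction of the second claim to the first via $H(n)-H'(n)=q\Delta(n-1)-\Delta(n)$ and the simultaneous positivity induction on $\Delta$ is correct. The problem is with your inductive scheme for the first claim.

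The inner inductive hypothesis $H(m)-H'(m)\le 0$ for $N\le m<n$ has the wrong sign for the use you want to put it to. From
\[
H(n)-H'(n)=\Delta(n-k)-\sum_{i=1}^{k-1}\bigl[b_iH(n-k+i)-b'_iH'(n-k+i)\bigr],
\]
showing $H(n)-H'(n)\le 0$ requires the sum on the right to be \emph{large}. But on the ``high'' indices $n-k+i\ge N$ with $b_i=b'_i=1$ (and for $i>r$ one always has $b_i=b'_i$), the inner hypothesis makes each such summand $H(n-k+i)-H'(n-k+i)\le 0$ --- the opposite of what you need. The single positive term $H(n-k+r)$ at the distinguishing index $i=r$ (when $r$ falls in the high range) does not control these, and your proposal gives no mechanism for doing so. The boundary case $n=N+k$ that you flag is a symptom of this same sign mismatch, not a separate obstacle.

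The paper's proof avoids the induction on $n$ altogether. It bounds $H(n)$ from above and $H'(n)$ from below \emph{separately}, in each case writing the bound as a combination of values $h(n-k-t)$, $h'(n-k-t)$ with $t\ge 1$; since $n\le N+k$ these arguments are all $<N$, so the hypothesis $\Delta(m)\le 0$ for $m<N$ applies directly. In the case $s>s'$ it obtains
\[
H(n)\le (q-1)\sum_{t=1}^{k-s}h(n-k-t),\qquad H'(n)\ge (q-1)\sum_{t=1}^{k-s'-1}h'(n-k-t),
\]
and since $k-s\le k-s'-1$ the difference is at most $(q-1)\sum_{t=1}^{k-s}\Delta(n-k-t)\le 0$. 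The case $s=s'$ is handled in the same spirit via a finer pair of upper/lower bounds built from (\ref{cancellation of non-prime periods}) together with Corollaries~\ref{c2} and~\ref{c4}. The essential idea is that descending one further level in the recursion pushes every $h$-argument strictly below $N$, which simultaneously resolves the sign issue and the boundary case $n=N+k$.
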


\begin{proof}
Suppose that $r=s$, and observe that this is equivalent to the condition $s>s'$.  By Lemma 4.2 we may assume that $N \ge 3k-s$.  From (\ref{H rearranged}), (\ref{cancellation of non-prime periods}), and (\ref{chain}) one has
\begin{align*}
H(n) &\le h(n-k)-\sum_{i \in S} H(n-k+i)\\
&=h(n-k)-h(n-2k+s)+\sum_{t=1}^{k-s} b_t H(n-2k+s+t)\\
&=(q-1)\sum_{t=1}^{k-s} h(n-k-t) - \sum_{t=0}^{k-s-1} H(n-k-t) + \sum_{t=1}^{k-s} b_t H(n-2k+s+t)\\
&\le (q-1)\sum_{t=1}^{k-s} h(n-k-t).
\end{align*}
To summarize,
\begin{align} \label{H upper 1}
&H(n) \le (q-1) \sum_{t=1}^{k-s} h(n-k-t).
\end{align}

For any $L>0$ one has
\begin{align} \label{chain}
    &h(n) = (q-1) \sum_{t=1}^L h(n-t) - \sum_{t=0}^{L-1} H(n-t).
\end{align}

Note that $s'<k-1$.  From Proposition \ref{p2} there exists $\iota \in \{ 0, 1 \}$ such that $b'_{t(k-s')-\iota}=0$ for every $1 \le t \le U$ where $U = \lfloor \frac{k+\iota}{k-s'} \rfloor$.  Using (\ref{h ineq}) and (\ref{chain}) one has
\begin{align*}
H'(n) &\ge h'(n-k) - \sum_{i=1}^{s'} b'_i h'(n-2k+i)\\
&= (q-1) \sum_{t=1}^k h'(n-k-t) - \sum_{t=0}^{k-1} H'(n-k-t) - \sum_{i=1}^{s'} b'_i h'(n-2k+i)\\
&\ge (q-1) \sum_{t=1}^{k-s'-1} h'(n-k-t) - \sum_{t=0}^{k-1} H'(n-k-t) \\
&+ \sum_{t=1}^U h'(n-2k-\iota+t(k-s'))+h'(n-2k)\\
&=(q-1) \sum_{t=1}^{k-s'-1} h'(n-k-t) +\left( h'(n-2k)-\sum_{l=1}^{k-s'-\iota} H'(n-2k+l) \right)\\
&+\sum_{t=1}^{U-1} \left( h'(n-2k-\iota+t(k-s')) - \sum_{l=1}^{k-s'} H'(n-2k-\iota+t(k-s')+l) \right)\\
&+ \left( h'(n-2k-\iota+U(k-s'))-\sum_{l=1}^{k+\iota-U(k-s')} H'(n-2k-\iota+U(k-s')+l) \right)\\
&\ge (q-1) \sum_{t=1}^{k-s'-1} h'(n-k-t).
\end{align*}
Thus
\begin{align} \label{H lower 1}
&H'(n) \ge (q-1)\sum_{t=1}^{k-s'-1} h'(n-k-t).
\end{align}

Using (\ref{H upper 1}) and (\ref{H lower 1}) it follows that
\begin{align*}
H(n)-H'(n) &\le (q-1) \sum_{t=1}^{k-s} h(n-k-t) - (q-1) \sum_{t=1}^{k-s'-1} h'(n-k-t) \le 0
\end{align*}
for $N \le n < N+k$.\\

Suppose that $r \neq s$, equivalently $s=s'$.  By Lemma \ref{l2} we may assume that $n \ge 4k$.  By use of Corollary \ref{c2} and equality (\ref{cancellation of non-prime periods}) one has
\begin{align*}
    &H(n) = h(n-k) - \sum_{i \in I} h(n-2k+i) + \sum_{i \in I} \sum_{t=1}^s b_t H(n-2k+i+t)\\
    &=(q-1) \sum_{t=1}^{k-d} h(n-k-t) - \sum_{t=0}^{k-d-1} H(n-k-t) - \sum_{i \in I-\{d\}} h(n-2k+i)\\
    &+\sum_{i \in I} \sum_{t=1}^s b_t H(n-2k+i+t)\\
    &\le (q-1) \sum_{t=1}^{k-d} h(n-k-t)- \sum_{i \in I-\{d\}} h(n-2k+i).
\end{align*}
It is easy to see that
\begin{align*}
    &H'(n) \ge h'(n-k) - \sum_{i \in I'} h'(n-2k+i)\\
    &=(q-1)\sum_{t=1}^{k-d'} h'(n-k-t) - \sum_{i \in I'-\{d'\}} h'(n-2k+i) - \sum_{t=0}^{k-d'-1} H'(n-k-t).
\end{align*}
It follows that
\begin{align} \label{this}
\begin{split}
    &H(n)-H'(n) \le (q-1) \sum_{t=1}^{k-d} h(n-k-t) - \sum_{i \in I-\{d\}} h(n-2k+i)\\
    &-(q-1) \sum_{t=1}^{k-d'} h'(n-k-t) + \sum_{i \in I'-\{d'\}} h'(n-2k+i) + \sum_{t=0}^{k-d'-1} H'(n-k-t).
\end{split}
\end{align}

Suppose $d' \le d$.  One has
\begin{align*}
    &H(n)-H'(n) \le -h'(n-2k+r) + \sum_{i=d+1}^{r-1} h(n-2k+i)+ \sum_{t=0}^{k-d'-1} H'(n-k-t)\\
    &\le -h'(n-2k+r)+\sum_{i=d+1}^{r-1} h'(n-2k+i) + \sum_{t=0}^{k-d'-1} H'(n-k-t)\\
    &\le -h'(n-2k+d) + \sum_{t=0}^{k-d'-1} H'(n-k-t) \le 0,
\end{align*}
using Corollary \ref{c4} and the fact that $h'(n-2k+r) - (q-1) \sum_{t=1}^{r-d-1} h'(n-2k+r-t) \ge (q-1) h'(n-2k+d)$ since $r-d-1 < k-s$.

Suppose $d' > d$.  If $r=d$ then $I= I' \cup \{d\}$.  Inequality (\ref{this}) becomes
\begin{align*}
&H(n)-H'(n) \leq (q-1) \sum_{t=k-d'+1}^{k-d} h(n-k-t) - h'(n-2k+d')+\\
&\sum_{t=0}^{k-d'} H'(n-k-t) \leq -h'(n-2k+d-1) + \sum_{t=0}^{k-d} H'(n-k-t) \leq 0,
\end{align*}
where we have used Corollary \ref{c4}.

Assuming now that $r>d'$ we add and subtract $h'(n-2k+d')$ to inequality (\ref{this}) and use corollary \ref{c4} to obtain
\begin{align*}
&H(n)-H'(n) \leq (q-1) \sum_{t=k-r}^{k-d} h(n-k-t) - h(n-2k+r)-\\
&(q-1) \sum_{t=k-r}^{k-d'} h'(n-k-t) + \sum_{t=r-1}^{d'} h'(n-2k+i) - h'(n-2k+d')+\\
&\sum_{t=0}^{k-d'-1} H'(n-k-t) \leq\\
&\sum_{t=k-d'+1}^{k-d} h(n-k-t) - h'(n-2k+r)+\sum_{t=k-r+1}^{k-d'} h(n-k-t) \leq 0.
\end{align*}

Finally, if $d'>r>d$ then $I' \subset I \cap \{k-1, \dots, r+1 \}$ and we have
\begin{align*}
    &H(n)-H'(n) \le (q-1) \sum_{t=k-d'+1}^{k-d} h(n-k-t) - h'(n-2k+d')\\
    &-h(n-2k+d)+\sum_{t=0}^{k-d'-1}H'(n-k-t) \le 0.
\end{align*}
\end{proof}

By combining statements proved in this section one can deduce Theorems 1 and 2 for the case $k=k'$.  Indeed, let $\Delta(n) = h(n)-h'(n)$.  Then $\Delta(n) \le 0$ for $n < 4k$, hence $N \ge 4k$.  According to Lemma \ref{l3} one has $\Delta(N+t) \ge q \Delta(N+t-1) \ge (q-1) \Delta(N+t-1)$ for $0 \le t \le k-1$, and by Corollary \ref{c1} this implies $\Delta(N+k) \ge (q-1) \Delta(N+k-1)$.  By a simple inductive argument, Corollary \ref{c1} then implies that $\Delta(n) \ge (q-1) \Delta(n-1)$ for any $n \ge N$.  Theorem 2 follows from Theorem 1 by observing that $P_w(n) = h(n+k)/q^{n+k}$ and likewise $P_{w'}(n) = h'(n+k)/q^{n+k}$. Finally Theorem 3 is an immediate consequence of Lemma \ref{l2}.  

\section{A Proof of Theorems 2.1 and 2.2 for the Case $\mathbf{k>k'}$}

\begin{lemma} \label{5.1}
Let $k>k'$.  Then $h(n)-q^{k-k'}h'(n-k+k') \le 0$ for $n \le k+1$.
\end{lemma}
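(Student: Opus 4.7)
The plan is to verify this inequality by elementary direct counting at each of the three allowed values $n<k$, $n=k$, and $n=k+1$. At such small lengths the quantities $h(n)$ and $h'(n-k+k')$ can be read off from the definition of $h_w(\cdot)$ as the count of length-$n$ strings ending in $w$ with no other occurrence of $w$ as a substring, without invoking any of the recurrences or identities of Section 3.

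For $n<k$, a string of length $n$ cannot contain the length-$k$ word $w$ at all, so $h(n)=0$; since $k>k'$ and $n<k$ we also have $n-k+k'<k'$, so $h'(n-k+k')=0$, and the inequality becomes $0\le 0$. For $n=k$, the unique length-$k$ string ending in $w$ is $w$ itself, and it vacuously satisfies the non-recurrence condition, so $h(k)=1$; the same reasoning gives $h'(k')=1$, yielding
\begin{align*}
h(k)-q^{k-k'}h'(k') \;=\; 1-q^{k-k'} \;\le\; 1-q \;\le\; -1
\end{align*}
since $q\ge 2$ and $k-k'\ge 1$.

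The only case with any content is $n=k+1$. I would parametrize the $q$ strings of length $k+1$ ending in $w$ by their single free leading character, and then observe that such a string contains $w$ a second time (necessarily at positions $2$ through $k+1$) precisely when $w_j=w_{j+1}$ for all $j=1,\dots,k-1$, i.e.\ $b_{k-1}=1$, together with the leading character equal to $w_k$. This gives $h(k+1)=q-b_{k-1}\le q$ and symmetrically $h'(k'+1)=q-b'_{k'-1}\ge q-1$, so
\begin{align*}
h(k+1)-q^{k-k'}h'(k'+1) \;\le\; q-q^{k-k'}(q-1) \;\le\; q-q(q-1) \;=\; q(2-q) \;\le\; 0,
\end{align*}
using $q^{k-k'}\ge q$ in the middle inequality. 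The only point that deserves attention is that the slack afforded by $q^{k-k'}\ge q$ must absorb the worst pairing, in which $w$ is a constant word (so $h(k+1)=q-1$) while $w'$ is not (so $h'(k'+1)=q$); the hypothesis $k-k'\ge 1$ is precisely what makes this slack sufficient, so there is no genuine obstacle and the lemma follows.
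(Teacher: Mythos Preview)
Your proof is correct and follows essentially the same route as the paper: both treat $n<k$, $n=k$, and $n=k+1$ separately, and at $n=k+1$ both use the bounds $h(k+1)\le q$ and $h'(k'+1)\ge q-1$ (the paper obtains the latter via $H'(k'+1)\le 1$, you via direct counting). One minor slip in exposition: the parenthetical ``necessarily at positions $2$ through $k+1$'' should read ``positions $1$ through $k$,'' since positions $2$ through $k+1$ are where the defining terminal copy of $w$ already sits; the subsequent argument is unaffected.
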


\begin{proof}
Note that $h(n)-q^{k-k'}h'(n-k+k') = 0$ for $k-k' < n < k$ and $h(n)-q^{k-k'}h'(n-k+k') = 1-q^{k-k'} < 0$ when $n=k$.  When $n=k+1$, for any $0 \le m \le k-k'$ one has 
\begin{align*}
h(k+1)-q^mh'(k'+1) &\le q-q^{m+1}+q^{m}H'(k'+1)\\
&\le -q^{m+1}+2q^{m} \le 0.
\end{align*}
\end{proof}

\begin{lemma} \label{5.2}
Let $k>k'$ and $N > k+1$.  If $h(n) - q^{k-k'}h'(n-k+k') \le 0$ for $n<N$, then $H(n)-q^{k-k'}H'(n-k+k') \le 0$ for $N \leq n \leq N+k$.
\end{lemma}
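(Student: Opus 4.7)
My plan is to parallel the proof of Lemma \ref{l3}, which established the analogous statement for $k=k'$, while tracking the scaling factor $q^{k-k'}$ and the shift $k-k'$ in the arguments of $h'$ relative to $h$. The first move is to recast the conclusion in a useful form: applying the identity $H(j) = q h(j-1) - h(j)$ to both $H(n)$ and $H'(n-k+k')$ yields
$$H(n) - q^{k-k'}H'(n-k+k') = q\Delta(n-1) - \Delta(n),$$
where $\Delta(j) := h(j) - q^{k-k'}h'(j-k+k')$. The conclusion of the lemma is thus equivalent to $\Delta(n) \ge q\Delta(n-1)$ for $N \le n \le N+k$, directly analogous to the conclusion of Lemma \ref{l3}.

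Next, I would separately produce an upper bound on $H(n)$ and a lower bound on $q^{k-k'}H'(n-k+k')$ whose arguments lie in the regime where the hypothesis applies (all arguments $\le n-k-1 \le N-1$). For the upper bound on $H(n)$ I would replicate the derivation from Lemma \ref{l3}: from (\ref{H rearranged}), identity (\ref{cancellation of non-prime periods}), and Corollary \ref{c2} one obtains
$$H(n) \le (q-1) \sum_{t=1}^{k-d} h(n-k-t) - \sum_{i \in I-\{d\}} h(n-2k+i).$$
For the lower bound on $H'(m)$ with $m = n-k+k'$, the analogous argument applied to the primed system (noting that $m - k' = n-k$ and $m - 2k' + i = n - k - k' + i$) produces a bound of the same shape in $h'$, with arguments aligned to $n-k-t$ up to a harmless shift. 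Multiplying by $q^{k-k'}$ then reduces the original inequality to a comparison of sums of $h$ and $h'$ values at arguments strictly less than $N$, which the hypothesis controls.

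The main obstacle will be the mismatch in arguments produced when applying the hypothesis: it gives $h(j) \le q^{k-k'}h'(j-k+k')$, comparing $h$ and $h'$ at arguments differing by $k-k'$, whereas the two bounds above naturally pair $h(n-k-t)$ with $h'(n-k-t)$ at the same argument. To bridge this gap I would propagate the hypothesis forward in time by using the recurrence (\ref{chain}) applied to $h'$, in the spirit of Corollary \ref{c1}, splitting into cases based on the relative sizes of $d$, $d'$, and $r$ as in the case analysis of Lemma \ref{l3}, with Corollary \ref{c4} used to absorb residual $H'$ terms. The boundary value $n = N+k$ warrants particular care: there the argument $n-k = N$ sits exactly at the edge of the hypothesized range, so any slack in the bounds must absorb the value $\Delta(N)$, which is not directly controlled by the hypothesis; Lemma \ref{5.1} provides the analog of the starting inequality from which the forward propagation begins.
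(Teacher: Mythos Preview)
Your plan imports the heavy machinery of Lemma~\ref{l3} into a setting where it is neither needed nor available. Concretely, the refined upper bound you quote for $H(n)$ relies on Corollary~\ref{c2}, which requires $n\ge 4k$. In the $k=k'$ case this was guaranteed by Lemma~\ref{l2}, but here Lemma~\ref{5.1} only gives $N>k+1$, so $n$ may be as small as $k+2$ and Corollary~\ref{c2} simply does not apply. The ``mismatch obstacle'' you then identify is a difficulty of your own making: the case analysis over $d,d',r$ in Lemma~\ref{l3} was calibrated to the situation $k=k'$ (so that $I$ and $I'$ live in the same index range and $r$ is well-defined), and there is no reason to expect it to survive when the words have different lengths.

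The paper's proof is essentially three lines and avoids all of this. Using only the elementary bounds $H(n)\le h(n-k)$ and Corollary~\ref{stuff} applied to $w'$, namely $H'(n-k+k')\ge (q-1)\,h'(n-k-1)$, one gets
\[
H(n)-q^{k-k'}H'(n-k+k')\;\le\; h(n-k)-q^{k-k'}(q-1)\,h'(n-k-1)\;\le\; h(n-k)-q^{k-k'}h'(n-2k+k'),
\]
the last step because $h'$ is nondecreasing and $n-k-1\ge n-2k+k'$ when $k>k'$. The right-hand side is exactly $\Delta(n-k)$, which is $\le 0$ by hypothesis for $n-k<N$. No autocorrelation structure, no $I$, no case split --- the measure gap between $w$ and $w'$ dominates everything. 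Your reformulation $H(n)-q^{k-k'}H'(n-k+k')=q\Delta(n-1)-\Delta(n)$ is correct and is how the lemma is used downstream, but the proof itself should go through $\Delta(n-k)$ directly rather than through the Lemma~\ref{l3} template.
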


\begin{proof}
For $0 \le m \le k-k'$ one has 
\begin{align*}
    H(n)-q^m H'(n-m) &\le h(n-k)-q^m(q-1)h'(n-k'-1-m)\\
    &\le h(n-k)-q^m h'(n-k-m),
\end{align*}
and the result follows.
\end{proof}

Note that in Lemma \ref{5.2} the inequality $H(n)-q^{k-k'}H'(n-k+k') \le 0$ is equivalent to $q \left( h(n-1)-q^{k-k'} h'(n-k+k'-1) \right) \le h(n)-q^{n-k+k'} h'(n-k+k')$.

\begin{lemma} \label{5.3}
Let $k>k'$ and $n \ge 2k$.  If $h(n-t)-q^{k-k'}h'(n-k+k'-t) \ge (q-1) \left( h(n-t-1)-q^{k-k'} h'(n-k+k'-t-1) \right)$ for $1 \leq t \leq k-1$ then $h(n)-q^{k-k'}h'(n-k+k'-t) \ge (q-1) \left( h(n-1)-q^{k-k'}h'(n-k+k') \right)$.
\end{lemma}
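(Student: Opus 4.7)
The plan is to parallel the proof of Corollary \ref{c1}, adapted to the $q^{k-k'}$ scaling and argument shift by $k-k'$ between $h$ and $h'$. Set $\Delta(n) := h(n) - q^{k-k'} h'(n-k+k')$; the target inequality is $\Delta(n) \ge (q-1)\Delta(n-1)$, which via the identity $H(m) - q^{k-k'} H'(m-k+k') = q\Delta(m-1) - \Delta(m)$ recorded after Lemma \ref{5.2} is equivalent to $H(n) - q^{k-k'} H'(n-k+k') \le \Delta(n-1)$.

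Applying (\ref{H rearranged}) to $w$ and to $w'$ and scaling yields
\begin{align*}
H(n) - q^{k-k'} H'(n-k+k') &= \bigl[h(n-k) - q^{k-k'} h'(n-k)\bigr]\\
&\quad - \sum_{i=1}^{s} b_i H(n-k+i) + q^{k-k'}\!\sum_{i=1}^{s'} b'_i H'(n-k+i).
\end{align*}
The essential complication relative to Corollary \ref{c1} is that $h(n-k) - q^{k-k'} h'(n-k)$ is \emph{not} the $\Delta$-value $\Delta(n-k) = h(n-k) - q^{k-k'} h'(n-2k+k')$, because the $h'$-argument is off by $k-k'$ from the one used in defining $\Delta$. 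To bridge the two, I would iterate the identity $qh'(m-1) = h'(m) + H'(m)$ exactly $k-k'$ times starting from $m = n-2k+k'$, obtaining $q^{k-k'} h'(n-2k+k') = h'(n-k) + \sum_{t=1}^{k-k'} q^{k-k'-t} H'(n-2k+k'+t)$, which rewrites the memory term as $\Delta(n-k)$ plus a controlled weighted sum of $H'$-values at arguments in the range from $n-2k+k'+1$ to $n-k$.

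The inductive hypothesis $\Delta(n-t) \ge (q-1)\Delta(n-t-1)$ for $1 \le t \le k-1$ then translates, via the same equivalence, into the pointwise upper bounds $H(n-t) - q^{k-k'} H'(n-t-k+k') \le \Delta(n-t-1)$. For the correlation sums themselves, the pointwise pairing used in Corollary \ref{c1} is no longer available since the supports of $b$ and $b'$ have different widths; instead, I would apply Lemma \ref{l1} to $w$ and to $w'$ independently, together with Corollary \ref{stuff} for $w'$, to bound $\sum b_i H(n-k+i)$ from below and $\sum b'_i H'(n-k+i)$ from above. The hypothesis $n \ge 2k$ ensures every index in play lies above the threshold required by Lemma \ref{l1}.

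Finally, I would chain the memory-term decomposition, the reformulated inductive hypothesis, and the correlation bounds together in the telescoping style of Corollary \ref{c1} to conclude $\Delta(n) \ge (q-1)\Delta(n-1)$. The main obstacle is the bookkeeping induced by the $H'$ corrections coming from the $k-k'$-fold expansion of the memory term: these extra terms must be matched against the scaled correlation sums and the $\Delta$-chain so that the residual is nonnegative. A secondary subtlety is the absence of pointwise cancellation between $b_i$ and $b'_i$, which likely forces a short case analysis analogous to the $r=1$, $r=k-1$, and intermediate splits used in Corollary \ref{c1}.
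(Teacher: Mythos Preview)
Your plan mimics the machinery of Corollary \ref{c1}, but the paper's proof takes a much shorter and more elementary route that avoids the correlation sums entirely. Rather than expanding via (\ref{H rearranged}) and tracking the $b_i$, $b'_i$ terms, the paper works directly with the one–step identity $h(m)=qh(m-1)-H(m)$ applied to both $w$ and $w'$:
\[
h(n)-q^{K}h'(n-K)=q\bigl(h(n-1)-q^{K}h'(n-K-1)\bigr)+q^{K}H'(n-K)-H(n),\qquad K=k-k'.
\]
Then it uses only two coarse bounds: $H'(m)\ge (q-1)h'(m-k'-1)$ (Corollary \ref{stuff}) and $H(n)\le h(n-k)$, together with the monotonicity of $h'$ (so $h'(n-k'-K-1)\ge h'(n-k-K)$ since $k>k'$). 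This yields
\[
h(n)-q^{K}h'(n-K)\ \ge\ q\,\Delta(n-1)-\Delta(n-k),
\]
and the inductive hypothesis $\Delta(n-t)\ge(q-1)\Delta(n-t-1)$ is invoked only to obtain $\Delta(n-1)\ge\Delta(n-k)$, giving $(q-1)\Delta(n-1)$ immediately. No expansion of (\ref{H rearranged}), no $k-k'$–fold unrolling of the $h'$ recursion, no Lemma \ref{l1}, and no case split on $r$ are needed.

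Your approach introduces precisely the complications the paper's argument is designed to bypass: the bridging term $\sum_{t=1}^{k-k'} q^{k-k'-t}H'(n-2k+k'+t)$ you generate must then be absorbed somewhere, and since there is no pointwise pairing of $b_i$ with $b'_i$ (different supports, different lengths), you would indeed be forced into a case analysis whose closure you have not verified. The paper's point is that for $k>k'$ the single inequality $H'\ge(q-1)h'(\cdot-k'-1)$ already dominates $H\le h(\cdot-k)$ after one application of monotonicity, so none of that bookkeeping is necessary.
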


\begin{proof}
For any $0 \le m \le k-k'$ we have
\begin{align*}
h(n)-q^m h'(n-m)=&qh(n-1)-H(n)-q^{m+1}h'(n-m-1)+q^m H'(n-m)\\
=&q\left( h(n-1)-q^m h'(n-m-1) \right) +q^m H'(n-m)-H(n)\\
\ge &q\left( h(n-1)-q^m h'(n-m-1) \right)\\
&+q^m (q-1) h'(n-k'-m-1)-H(n)\\
\ge &q\left( h(n-1)-q^m h'(n-m-1) \right)\\
&+q^m h'(n-k'-m-1)-h(n-k)\\
\ge &q\left( h(n-1)-q^m h'(n-m-1) \right)+q^m h'(n-k-m)-h(n-k).
\end{align*}
Denote $K=k-k'$.  With $m=K$ we apply the inductive assumption to obtain
\begin{align*}
&h(n)-q^m h'(n-m)\\
\ge &q\left( h(n-1)-q^K h'(n-K-1) \right)+q^K h'(n-k-K)-h(n-k)\\
\ge &q\left( h(n-1)-q^K h'(n-K-1) \right)+q^K h'(n-1-K)-h(n-1)\\
=&(q-1)\left( h(n-1)-q^K h'(n-K-1) \right).
\end{align*}
\end{proof}

The lemmas of this section combine to prove the main theorems when $k>k'$ in the following way.  Let $\Delta(n) = h(n)-q^{k-k'}h'(n-k+k')$.  Then according to Lemma \ref{5.3} one has $\Delta(n) \le 0$ for $n \le k+1$ hence $N > k+1$, which is the statement of Theorem 3.  From Lemma \ref{5.2} one has $\Delta(N+t) \ge q \Delta(N+t-1) \ge (q-1) \Delta(N+t-1)$ for $0 \le t \le k-1$, and by Lemma \ref{5.3} this implies $\Delta(N+k) \ge (q-1) \Delta(N+k-1)$.  By a simple inductive argument, Lemma \ref{5.3} then implies that $\Delta(n) \ge (q-1) \Delta(n-1)$ for any $n \ge N$.  Dividing $\Delta(n)$ by $q^{n-k}$ then yields Theorem 2.

Observe that Lemmas \ref{5.1}, \ref{5.2}, and \ref{5.3} hold if everywhere in their statements we replace $k-k'$ with any $m$ satisfying $0 \le m \le k-k'$.  Theorem 1 is a consequence of setting $m=0$.

\section{Numerical Results on Lengths of Short and Intermediate Times Intervals}

If we think about the applicability of our results to finite time predictions of dynamics then one is led to the following key question: How long is the short times interval where there exists a hierarchy of the first hitting probabilities curves, within which predictions are possible for any moment of time? Another time interval where such predictions can be made is the last (third) infinite times interval. Therefore it is of great importance for applications to estimate the lengths of two finite intervals, the short times interval where finite time predictions are possible and the second, intermediate interval.

Clearly these lengths depend on $k$, i.e. the lengths of the words which are coding the subsets of phase space which we consider.

Theorem 2.3 gives a linear estimate on the length of the short times interval. However, numerical simulations show that the lengths of both of these intervals grow exponentially (asymptotically as the common length $k$ of the words increases).  If this is indeed the case then, practically speaking, only the short times interval is of interest because experiments and observations are usually not terribly long. In particular, if one is making observations about small subsets of the phase space, the short times interval may become very long, and thus covers the entire time of a reasonable (practically possible) experiment.

The following table presents the beginning and ending moments of the intermediate interval for the doubling map of $[0,1]$, i.e. the moments of time when the first and the last pairs of the first hitting probability curves intersect, respectively. Recall that the intermediate interval starts at the moment when the short times interval ends.
Notably, the length of the short times interval was in our computer experiments always larger than the length of the intermediate interval. It also appears that the ratio of lengths of these intervals converges to one in the limit when $k$ tends to infinity.

\begin{center}
 \begin{tabular}{||c c c||} 
 \hline 
 $k$ & Beginning of interval & End of interval \\ [0.5ex] 
 \hline\hline
 $4$ & 20 & 26 \\ 
 \hline
 $5$ & 37 & 52 \\
 \hline
 $6$ & 70 & 103 \\
 \hline 
 $7$ & 135 & 208 \\
 \hline
 $8$ & 264 & 415 \\[1ex] 
 \hline
\end{tabular}
\end{center}

\begin{figure}[htp] 
\centering
\includegraphics[width=10cm]{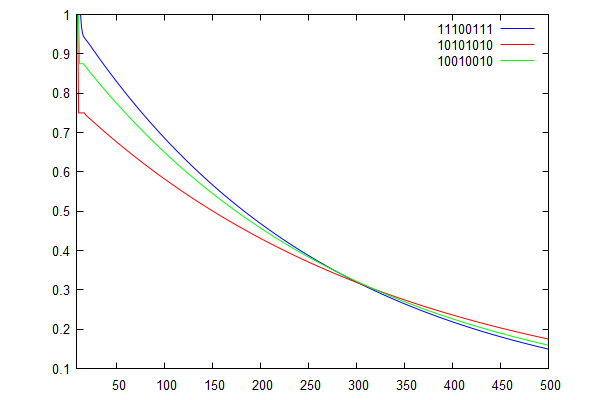} 
\caption{First hitting curves for the doubling map.}
\label{fig2}
\end{figure}

In figure \ref{fig2} we present three first hitting probability curves for the doubling map. These curves correspond to the words $10101010$, $10010010$, and $11100111$ of length eight. (Recall that Figure \ref{fig} presents similar results for words of length four). 

Recall that as the number of elements in a refinement of the Markov partition increases so does $k$ too, which is the length of the word coding each element. Therefore when we consider dynamics at finer scales, the length of the short times interval (on which predictions about the dynamics can be made) seems to grow exponentially. It is a very encouraging (although numerical) result which suggests that finite time predictions of dynamics could be made on very long time scales if we consider partition of the phase space with a sufficiently large number of elements.

\section{Concluding Remarks}
Our results demonstrate that interesting and important finite time predictions for the dynamics of systems with the strongest chaotic properties and for the most random stochastic processes are possible. They also indicate how such predictions can be practically made. Numerical simulations \cite{BV} suggest that some finite time predictions for nonuniformly hyperbolic systems are also possible.
However finite time predictions in this case will be not as simple as for FDL-systems which are the uniformly hyperbolic systems with the maximal possible uniformity. It seems natural to expect that for more general classes of chaotic dynamical systems there will be more than two time intervals with different hierarchies of the first hitting probabilities. Some (although too vague for formulating exact conjectures) indications of that can be extracted from computer simulations of dispersing billiards \cite{BV}  

Although the theory of finite time dynamics of chaotic systems is in infancy, it is rather clear what to do next and to which classes of dynamical systems these results should be generalized. 
For FDL-systems a remaining problem is to prove better estimates of the length of the short times interval.
A natural next step would be to consider IID-like but not FDL dynamical systems. Consider for instance a skewed tent map of the unit interval, i.e. $f(x)=ax$ if $0<x<1/a$ and $f(x)=ax/(1-a)$ if $1/a<x<1$, where $a>1$. This map is not an FDL-system (unless $a=2$) because the absolute values of the derivatives differ at different points of the phase space. (Therefore it is a system with distortion). Then one should follow a standard path in developing dynamical systems theory by trying to obtain finite time dynamics results for more and more non-uniformly hyperbolic dynamical systems.
For instance a natural question is whether it would be possible to find optimal Young towers \cite{LY} for some interesting classes of nonuniformly hyperbolic dynamical systems and estimate corresponding exponents in decay of the first recurrences probabilities. 
A significant problem is to develop relevant  mathematical approaches and techniques, more dynamical than combinatorial in spirit, to handle the questions arising in the studies of finite time dynamics.
It is also worthwhile to mention that in the Equilibrium Statistical Mechanics the main problem is about phase transitions, i.e. existence or non-existence of several equilibrium distributions. If there are no such transitions then usually problems of the Nonequilibrium Statistical Mechanics come under attack, i.e. what is a rate of approach to the equilibrium etc. The FDL-systems demonstrate that even when a system is in unique equilibrium (i.e. there are no phase transitions) still its evolution could be nonhomogeneous and have interesting properties pertaining to the transport in the phase space.  

\section*{Appendix 1. An Upper Bound for $\mathbf{\Delta(n)}$ on the Interval $\mathbf{2k < n \le 4k}$ when $\mathbf{s=s'}$}

Viewing $\Delta(n)$ as a function of the values $b_i$ and $b'_i$ for $i<r$, we will show that if $b_i=0$ then $\Delta(b_i+1, n)-\Delta(b_i,n) \le 0$.  One can also show that if $b_i=1$ then $\Delta(b_i-1,n)-\Delta(b_i,n) \ge 0$. In addition, if $b'_i = 0$ or $1$ then $\Delta(b_i'+1, n)-\Delta(b'_i, n) \ge 0$ or $\Delta(b'_i-1,n) - \Delta (b'_i,n) \le 0$, respectively. Because of the almost complete redundancy in all these calculations, we will only display the former case.

Let
\begin{align*}
    \tilde{b}_t = \begin{cases} 1 & b_t=1 \textrm{ and } t \in I\\ 0 & \textrm{ otherwise} \end{cases}
\end{align*}
and
\begin{align*}
    \delta (n)=  \sum_{t=1}^{3k-n-1} b_t \tilde{b}_{3k-n-t} - b'_t \tilde{b}'_{3k-n-t} .
\end{align*}
For $2k \le n<3k-s$ one has
\begin{align} \label{first app 1}
&H(n)-H'(n) = -\delta(n).
\end{align}
To see this, observe that a word of length $n$ beginning and ending with $w$ contains a copy of $w$ beginning in position $n-k+t$ if and only if $b_t=1$ and $H(t+(n-2k)+k)=1$. Therefore we have the equality $H(n) = q^{n-2k}-\sum_{t=1}^k b_t H(n-k+t)$. Since $n -k+t < 2k-s+t \le 2k$ we may apply the identity $H(n-k+t) = \tilde{b}_{3k-n-t}$, which proves (\ref{first app 1}).

We establish the convention that $\delta(n)=0$ when $n < 2k$. When $n=3k-s$ one has
\begin{align*}
H(n)-H'(n) &= -\delta(n) - \sum_{t=3k-n}^s b_t \left( H(n-k+t)- H'(n-k+t) \right) \\
&= -\delta(n)+\delta(n-k+s).
\end{align*}
We now prove inductively that
\begin{align*}
    H(n)-H'(n) = -\delta(n)+\delta(n-k+s)
\end{align*}
for $n < 3k-r$.

First we observe that $H(n)-H'(n) = 0$ when $n<3k-s-r$. Note that $3k-n-t \ge 3k-n-s > r$, hence $\tilde{b}_{3k-n-t} = \tilde{b}'_{3k-n-t}$ (note that $I \cap \{i>r\} = I' \cap \{i>r\}$). For $t \ge r$ one therefore has $b_t \tilde{b}_{3k-n-t}-b'_t \tilde{b}'_{3k-n-t} =0$. When $t<r$, observe that $3k-n-t \ge s+r-t > s$, hence $b_t \tilde{b}_{3k-n-t}-b'_t \tilde{b}'_{3k-n-t} =0$. It follows that $H(n)-H'(n)=-\delta(n)=0$ for $n<3k-s-r$.

If $b_t=1$ and $t \notin S$, one has $n-k+t < 3k-s-r$, hence $H(n-k+t)-H'(n-k+t) =0$. Let $p=\lfloor \frac{n-2k}{k-s} \rfloor$.  One then has
\begin{align*}
    &H(n)-H'(n) = -\delta(n) - \sum_{i=3k-n}^s b_i (H(n-k+i)-H'(n-k+i))\\
    &=-\delta(n) - \sum_{t=1}^p \big( H(n-t(k-s))-H'(n-t(k-s)) \big) \\
    &- \sum_{i \notin S } b_i \big( H(n-k+i)-H'(n-k+i) \big)\\
    &=-\delta(n) - \sum_{l=1}^p \Big( -\delta(n-l(k-s))+\delta \big( n-(l+1)(k-s) \big) \Big)\\
    &=-\delta(n) + \delta(n-k+s),
\end{align*}
where we observe that $n-(p+1)(k-s) \le 2k$.  It follows that for $n < 3k-r$ one has
\begin{align} \label{H less than k-r}
\begin{split}
H(n)-H'(n) = -\delta(n) +\delta(n-k+s).
\end{split}
\end{align}

For $2k<n<3k-r$ by using (\ref{H less than k-r}) one thus has
\begin{align} \label{Delta < 3k-r}
\begin{split}
    \Delta(n) &= -q^{n-2k}\sum_{t=1}^r q^t ( \tilde{b}_t - \tilde{b}'_t ) + \sum_{t=2k}^n q^{n-t} \delta(t) - \sum_{t=3k-s}^n q^{n-t} \delta(t-k+s),
\end{split}
\end{align}
where we have used the fact that
\begin{align*}
    \Delta(2k) = -\sum_{t=1}^r q^t ( \tilde{b}_t-\tilde{b}'_t )
\end{align*}
when $s=s'$.

We now fix an index $i$ with $1 \le i \le k$ such that $b_i=0$. For $n \ge k$ let us denote by $h^*(n)$ the solution to the recurrence relation $h^*(n) = \sum_{t=1}^k b^*_t (qh^*(n+t-1)-h^*(n+t))$ where 
\begin{align*}
    b^*_t = \begin{cases} b_t & t \ne i\\ 1 & t=i, \end{cases}
\end{align*}
subject to the initial conditions $h^*(n)=0$ for $n<k$ and $h^*(k)=1$.  Let $H^*(n) = qh^*(n-1)-h^*(n)$.

Denote
\begin{align*}
    \delta^*(n) = \sum_{t=1}^{3k-n-i} b^*_t \tilde{b}^*_{3k-n-t}-b_t \tilde{b}_{3k-n-t} = \tilde{b}^*_{3k-n-i}+b^*_{3k-n-i}.
\end{align*}
We will also denote
\begin{align*}
    \Delta^*(n) = h^*(n)-h(n).
\end{align*}
It is easy to see that 
\begin{align} \label{delta star vals}
\begin{split}
    &\delta^*(t) \begin{cases} =0, & t<3k-i-s\\ =2, & t=3k-i-s\\ =0, & t=3k-i-s+1, \end{cases}\\
    &0 \le \delta^*(t) \le 2 \textrm{ for } 3k-i-s+1 < t \le 3k-1.
\end{split}
\end{align}

For $n<3k-s$ equality (\ref{Delta < 3k-r}) becomes
\begin{align*}
    \Delta^*(n) &= -q^{n-2k+i} + \sum_{t=2k}^n q^{n-t} \left( \tilde{b}^*_{3k-t-i} + b^*_{3k-t-i} \right) \\
    & = -q^{n-2k+i} + \sum_{t=3k-s-i}^n q^{n-t} \left( \tilde{b}^*_{3k-t-i} + b^*_{3k-t-i} \right) < 0,
\end{align*}
where we have used the fact that $i \le k-s-2$.  If $3k-s \le n < 3k-i$ then
\begin{align} \label{delta < 3k-r}
\begin{split}
    \Delta^*(n) &= -q^{n-2k+i} + \sum_{t=2k}^n q^{n-t} \left( \tilde{b}^*_{3k-t-i} + b^*_{3k-t-i} \right) \\
    &- \sum_{t=3k-s}^n q^{n-t} \left( \tilde{b}^*_{3k-t-i+k-s} + b^*_{3k-t-i+k-s} \right) < 0.
\end{split}
\end{align}

Let $3k-i \le n \le 3k$.  One has 
\begin{align} \label{asdfg}
\begin{split}
    &\Delta^*(n) \le -q^{n-2k+i}  + \sum_{t=2k}^{3k-i-1} q^{n-t} \left( \tilde{b}^*_{3k-t-i} + b^*_{3k-t-i} \right) \\
    &- \sum_{t=3k-i}^n q^{n-t} \left(H^*(t)-H( t) \right) = -q^{n-2k+i}  + \sum_{t=2k}^{3k-i-1} q^{n-t} \left( \tilde{b}^*_{3k-t-i} + b^*_{3k-t-i} \right) \\
    &- \sum_{t=3k-i}^n q^{n-t} \Delta^*(t-k) + \sum_{t=3k-i}^n q^{n-t} H^*(t-k+i)\\
    &+ \sum_{t=3k-i}^n q^{n-t} \sum_{j=1}^s b_j \left( H^*( t-k+j)-H(t-k+j) \right).
\end{split}
\end{align}
Note that $t-k+j \le 2k+s < 3k-i$ hence $H^*(t-k+j)-H(t-k+j) = -\delta^*(t-k+j)+\delta^*(t-2k+j+s)$ when $t-k+j \ge 2k$.  

One has
\begin{align*}
    &\sum_{t=3k-j}^n q^{n-t} b^*_{4k-t-j-i} - \sum_{t=3k-j+(k-s)}^n q^{n-t} b^*_{4k-t-j-i+(k-s)} \\
    &=\sum_{t=3k-j}^n q^{n-t} b^*_{4k-t-j-i} - \sum_{t=3k-j}^{n-k+s} q^{n-t-k+s} b^*_{4k-t-j-i} \ge 0
\end{align*}
and similarly
\begin{align*}
    &\sum_{t=3k-j}^n q^{n-t} \tilde{b}^*_{4k-t-j-i} - \sum_{t=3k-j+(k-s)}^n q^{n-t} \tilde{b}^*_{4k-t-j-i+(k-s)} \ge 0.
\end{align*}
It follows that 
\begin{align} \label{LP -2}
\begin{split}
    &\sum_{t=3k-j}^n q^{n-t} (H^*(t-k+j)-H(t-k+j)) \\
    &= \sum_{t=3k-j}^n q^{n-t} \delta^*(t-k+j)-\sum_{t=3k-j+(k-s)}^n q^{n-t} \delta^*(t-k+j-(k-s)) \ge 0.
\end{split}
\end{align}
Using (\ref{LP -2}) one has
\begin{align} \label{LP -1}
\begin{split}
    &\sum_{t=3k-i}^n q^{n-t} \sum_{j=1}^s b_j \left( H^*(t-k+j)-H(t-k+j) \right)\\
    &=\sum_{j=1}^{i-1} b_j \sum_{t=3k-i}^{ 3k-j-1} q^{n-t} \left( \tilde{b}^*_{3k-t-j} - \tilde{b}_{3k-t-j} \right) - \sum_{j=1}^s \sum_{t=3k-j}^n q^{n-t} (H^*(t-k+j)-H(t-k+j))\\
    &\le \sum_{j=1}^{i-1} b_j \sum_{t=3k-i}^{ 3k-j-1} q^{n-t} \left( \tilde{b}^*_{3k-t-j} - \tilde{b}_{3k-t-j} \right)
\end{split}
\end{align}
for $n \le 3k$.

For $3k-i \le t < 3k$ note that 
\begin{align} \label{LP -2.5}
    \Delta^*(t-k) = - \sum_{l=3k-t}^i q^{t-3k+l} \left( b^*_l-b_l \right)
\end{align}
and recall
\begin{align*}
    \Delta^*(2k) = -\sum_{l=1}^i q^l (b^*_l - b_l),
\end{align*}
whence $\Delta^*(t-k) = -q^{t-3k+i}$ for $3k-i \le t \le 3k$.  Using $H^*(t-k+i) \le q^{t-3k+i}$ for $t-k+i \ge 2k$ and (\ref{LP -1}), equality (\ref{asdfg}) becomes
\begin{align} \label{LP 0}
\begin{split}
    &\Delta^*(n) \le -q^{n-2k+i}  + \sum_{t=2k}^{3k-i-1} q^{n-t} \left( \tilde{b}^*_{3k-t-i} + b^*_{3k-t-i} \right) \\
    &+\sum_{t=3k-i}^n q^{n-3k+i} + \sum_{t=3k-i}^n q^{n-3k+i} +\sum_{j=1}^{i-1} b_j \sum_{t=3k-i}^{ 3k-j-1} q^{n-t} \left( \tilde{b}^*_{3k-t-j} - \tilde{b}_{3k-t-j} \right)\\
    &\le -q^{n-2k+i} + 2 \sum_{t=3k-s-i}^{3k-i-1} q^{n-t} + 2(i+1) q^{n-3k+i} + q^{n-3k+2i}\\
    &\le -q^{n-2k+i} + q^{n-3k+s+i+2} + q^{n-3k+2i+1}+q^{n-3k+2i} \le 0
\end{split}
\end{align}
where we observe that $\tilde{b}^*_{3k-t-j}-\tilde{b}_{3k-t-j}=1$ if and only if $t=3k-i-j$ and where we have used the facts $i < r \le k-s-2$ and $2(i+1) \le q^{i+1}$.\\

Let $3k < n < 4k$.  Using (\ref{cancellation of non-prime periods}) one has
\begin{align} \label{LP 1}
\begin{split}
    &-\sum_{t=3k+1}^n q^{n-t} \left( H^*(t)-H(t) \right) =-\sum_{t=3k+1}^n q^{n-t} \Delta^*(t-k) + \sum_{t=3k+1}^n \sum_{j \in I} \Delta^*(t-2k+j)\\
    &-\sum_{t=3k+1}^n q^{n-t} \sum_{j \in I} \sum_{l=1}^{k-j} b_l \left( H^*(t-2k+j+l) - H(t-2k+j+l) \right)\\
    &+\sum_{t=3k+1}^n q^{n-t} H^*(t-k+i) - \sum_{t=3k+1}^n \sum_{j \in I} q^{n-t} H^*(t-2k+j+i).
\end{split}
\end{align}

For fixed $j$ and $u \le 4k-j-l-1$ one has
\begin{align} \label{LP -2.625}
\begin{split}
    &- \sum_{l=1}^{k-j} b_l \sum_{t=3k+1}^u q^{n-t} \left( H^*(t-2k+j+l)-H(t-2k+j+l) \right)\\
    &= - \sum_{l=1}^{k-j} b_l \sum_{t=3k+1}^u q^{n-t} \left( \tilde{b}^*_{4k-t-j-l}- \tilde{b}_{4k-t-j-l} \right) \le 0,
\end{split}
\end{align}
observing that $\tilde{b}^*_\iota \ge \tilde{b}_\iota$.  For $4k-j-l-1 \le u \le 5k-j-l-i-1$ one also has
\begin{align} \label{LP -2.75}
\begin{split}
    &-\sum_{l=1}^{k-j} \sum_{t=4k-j-l-1}^u q^{n-t} \left( H^*(t-2k+j+l)-H(t-2k+j+l) \right)\\
    &=\sum_{l=1}^{k-j} \sum_{t=4k-j-l-1}^u q^{n-t} \left( \delta^*(t-2k+j+l)-\delta^*(t-2k+j+l-k+s) \right)\\
    &\le \sum_{l=1}^{k-j} \sum_{t=5k-j-l-s-i}^u q^{n-t} \delta^*(t-2k+j+l) \le 2q^{n-4k+i+s+2}.
\end{split}
\end{align} 

For $3k-i \le n \le 3k$ one has
\begin{align*}
    &H^*(n)-H(n) = \Delta^*(n-k)-\sum_{t=1}^s b_t \left( H^*(n-k+t)- H(n-k+t) \right) - H^*(n-k+i)\\
    &\ge \Delta^*(n-k) - \sum_{t=1}^s \delta^*(n-2k+t+s) - H^*(n-k+i)\\
    &\ge -q^{n-3k+i}-2s-H^*(n-k+i).
\end{align*}
For $n \ge 4k-i$ one thus has
\begin{align} \label{LP -3}
\begin{split}
    &-\sum_{l=1}^{k-j} \sum_{t=5k-j-l-i}^n q^{n-t} \left( H^*(t-2k+j+l)-H(t-2k+j+l) \right)\\
    &\le -\sum_{l=1}^{k-j} \sum_{t=5k-j-l-i}^n q^{n-t} \left( -q^{t-5k+j+l+i}-2s-H^*(t-3k+j+l+i) \right)\\
    &\le q^{n-4k+2i} + q^{n-4k+s+i+2} + \sum_{l=1}^{k-j} \left( \sum_{t=5k-j-l-i}^{5k-j-l-i-1} q^{n-t} + \sum_{t=5k-j-l-i}^n q^{n-5k+j+l+i} \right)\\
    &\le q^{n-4k+2i} + q^{n-4k+s+i+2} + q^{n-4k+i+2} + q^{n-4k+2i}\\
    &\le 2q^{n-3k-4}+ q^{n-3k-1}+q^{n-3k-3} \le q^{n-3k-1}+q^{n-3k-2} + q^{n-3k-3},
\end{split}
\end{align}
where we have applied (\ref{LP -2.5}) and used the inequalities 
\begin{align} \label{LP 1.5}
    &i \le q^{i-1}, \hspace{7mm} i<r, \hspace{7mm} r \le k-s-2, \hspace{7mm} r \le \frac{k}{2}-1, \hspace{7mm} s \le k-3.
\end{align}

For $n< 4k-i$
\begin{align} \label{LP -3 E}
    &-\sum_{l=1}^{k-j} \sum_{t=5k-j-l-i}^n q^{n-t} \left( H^*(t-2k+j+l)-H(t-2k+j+l) \right) = 0
\end{align}
since the former sum is empty.  Using (\ref{LP -2.625}), (\ref{LP -2.75}), and (\ref{LP -3 E}), for $3k < n < 4k-i$ one has
\begin{align} \label{LP -3.25}
    &-\sum_{t=3k+1}^n q^{n-t} \sum_{j \in I} \sum_{l=1}^{k-j} b_l \left( H^*(t-2k+j+l) - H(t-2k+j+l) \right) \le \sum_{j \in I} 2 q^{n-4k+s+i+2}.
\end{align}
For any $3k < n < 4k$ note that
\begin{align} \label{LP -3.5}
\begin{split}
    &\sum_{j \in I} \sum_{t=3k+1}^n q^{n-t} H^*(t-2k+j+i)\\
    &=\sum_{j \in I} \left( \sum_{t=4k-j-i-s}^{4k-j-i-1} q^{n-t} \tilde{b}^*_{4k-t-j-i} + \sum_{t=4k-j-i}^n q^{n-t} H^*(t-2k+j+i) \right)\\
    &\le \sum_{j \in I} \left( q^{n-4k+j+i+s+1} + (n-3k) q^{n-4k+j+i} \right) \le q^{n-3k+s-2} + (n-3k)q^{n-3k-2}.
\end{split}
\end{align}

For $3k < n < 4k-i$ one has $\Delta^*(t-k) \ge -q^{t-3k+i}$.  Applying inequalities (\ref{LP -3.25}) and (\ref{LP -3.5}) to (\ref{LP 1}) we have
\begin{align*}
    &-\sum_{t=3k+1}^n q^{n-t} \left( H^*(t)-H(t) \right) \le -\sum_{t=3k+1}^n q^{n-t} \Delta^*(t-k) + (k-s) 2 q^{n-4k+s+i+2} \\
    &+ (n-3k) q^{n-3k+i} + (n-3k)q^{n-3k-2} + q^{n-3k+s-2}\\
    &\le (n-3k-1) q^{n-3k+i}  + (k-s) q^{n-4k+s+i+3} + (n-3k) q^{n-3k+i}\\
    &+ (n-3k)q^{n-3k-2}+q^{n-3k+s-2}\\
    &\le q^{n-3k+i+k/2} + q^{n-3k+i+2} + q^{n-3k+i+k/2} + q^{n-3k+k/2-2}+q^{n-3k+s-2}\\
    &\le 4q^{n-2k-3} + q^{n-2k-5},
\end{align*}
where we have used the inequality $n-3k \le k-1 \le q^{k/2}$, inequalities (\ref{LP 1.5}), and the fact that $k \ge 5$ when $s=s'$.

Similar to (\ref{LP 0}) one has
\begin{align*}
    &q^{n-2k}\Delta^*(2k) - \sum_{t=2k}^{3k-1} q^{n-t} (H^*(t)-H(t))\\
    &\le -q^{n-2k+i}+q^{n-3k+s+i+2}+q^{n-3k+2i+1}+q^{n-3k+2i}.
\end{align*}
It follows that for $3k < n < 4k-i$ we have
\begin{align*}
    &\Delta^*(n) \le  q^{n-2k}\left( -q^i + q^{-1}+q^{-5}+q^{-6}+4q^{-3}+q^{-5} \right) \le 0,
\end{align*}
where we have again used the inequalities (\ref{LP 1.5}).

For $3k-i \le n \le 3k$ it is easy to see that
\begin{align*}
    \Delta^*(n) &\ge -q^{n-2k+i} - \sum_{t=3k-i}^n q^{n-t} (H^*(t)-H(t))\\
    &\ge -q^{n-2k+i} -(n-3k+i)q^{n-2k}\\
    &\ge -q^{n-2k+i}-q^{n-2k+i}.
\end{align*}
For $4k-i \le n < 4k$ one thus has
\begin{align*}
    &-\sum_{t=3k+1}^n q^{n-t} \left( H^*(t)-H(t) \right) \le 4q^{n-2k-3} + q^{n-2k-5} + \sum_{t=4k-i}^n q^{n-3k+i}\\
    &+q^{n-3k-1}+q^{n-3k-2} + q^{n-3k-3}\\
    &\le 4q^{n-2k-3} + q^{n-2k-5} + q^{n-3k+2i-1}+q^{n-3k}\\
    &\le 4q^{n-2k-3} + 3q^{n-2k-5}
\end{align*}
where we have used (\ref{LP -3}).

For $4k-i \le n < 4k$ one thus has
\begin{align*}
    &\Delta^*(n) \le q^{n-2k}\left( -q^i + q^{-1}+q^{-5}+q^{-6}+4q^{-3}+3q^{-5} \right) \le 0.
\end{align*}
Given $w$ and $\iota \in I-\{s\}$, let $h_\iota(n)$ be the solution to the recurrence relation defined by 
\begin{align*}
    &h_\iota(n) = \sum_{t=1}^k c_t (qh_\iota(n-t-1)-h_\iota(n-t)), \hspace{5mm} c_t = \begin{cases} b_t & t>\iota\\ 0 & t=\iota\\ 1 & t<\iota \end{cases}.
\end{align*}
and $h^\iota(n)$ the solution to
\begin{align*}
    &h^\iota(n) = \sum_{t=1}^k c_t (qh^\iota(n-t-1)-h^\iota(n-t)), \hspace{5mm} c_t = \begin{cases} b_t & t>\iota\\ 1 & t=\iota\\ 0 & t<\iota \end{cases}.
\end{align*}
We have shown that $\Delta(b_i+1, n) \le \Delta(b_i,n)$ for $2k \le n < 4k$.  As we remarked, with minimal alterations to the calculations of this section one can show the inequality $\Delta(b_i-1,n) \ge \Delta(b_i,n)$ if $b_i=1$.  Thus $\Delta(n) \le \Delta^*(n)$ for $2k \le n < 4k$ where $h^*(n)$ is the result of setting $b_r=0$.

\section*{Appendix 2. The Upper Bound is Negative}

Let $h^r(n)$ and $h_r(n)$ be as defined in Appendix 1 above. We will show that $h^r(n)-h_r(n) \le 0$ for $2k \le n < 4k$.  Throughout this section we will let $h(n) = h^r(n)$ and $h'(n) = h_r(n)$ to avoid further burdening the notation.

For $t<3k-r$ one has
\begin{align*}
    &\delta(t) = \tilde{b}_{3k-t-r}-\sum_{l=1}^{r-1} \tilde{b}'_{3k-t-l} + \sum_{l=r+1}^{3k-t-1} b_l \left( \tilde{b}_{3k-t-l} - \tilde{b}'_{3k-t-l} \right)\\
    &=\tilde{b}_{3k-t-r}-\sum_{l=1}^{r-1} \tilde{b}'_{3k-t-l} + \sum_{l=r+1}^{3k-t-r-1} b_l \left( \tilde{b}_{3k-t-l} - \tilde{b}'_{3k-t-l} \right)+b_{3k-t-r} - \sum_{l=3k-t-r+1}^{3k-t-1} b_l\\
    &=\tilde{b}_{3k-t-r}-\sum_{l=1}^{r-1} \tilde{b}'_{3k-t-l} + b_{3k-t-r} - \sum_{l=3k-t-r+1}^{3k-t-1} b_l\\
    &=\tilde{b}_{3k-t-r}-\sum_{l=1}^{r-1} \tilde{b}'_{3k-t-l} + b_{3k-t-r} - \sum_{l=1}^{r-1} b_{3k-t-l}.
\end{align*}
It is thus easy to see that
\begin{align} \label{LP2 -1}
    &\delta(t) \begin{cases} =0, & t < 3k-s-r\\ =2, & t=3k-s-r\\ = -2, & t=3k-s-r+1 \end{cases} \delta(t) \begin{cases} \le 2, & t < 3k-r\\ \ge -2, & t<2k+s-r\\ \ge -2(r-1), & 2k+s-r \le t < 3k-r \end{cases}.
\end{align}

Using (\ref{LP2 -1}), for $n<3k-r$ one thus has
\begin{align*}
    -\sum_{t=3k-s}^n q^{n-t} \delta(t-k+s) \le 0
\end{align*}
and
\begin{align*}
    \sum_{t=2k}^n q^{n-t} \delta(t) \le 2 q^{n-3k+s+r}.
\end{align*}

Using (\ref{Delta < 3k-r}) and the equality $\Delta(2k)=-2$, for $2k \le n < 3k-r$ one thus has
\begin{align*}
    &\Delta(n) \le -q^{n-2k} \Delta(2k) + 2q^{n-3k+s+r} \le -2\left( q^{n-2k} - q^{n-3k+s+r} \right) < 0.
\end{align*}

Let $3k-r \le n \le 3k$.  One has
\begin{align} \label{LP2 1}
\begin{split}
    &\Delta(n) = q^{n-3k+r+1} \Delta(3k-r-1) - \sum_{t=3k-r}^n q^{n-t} (H(t)-H'(t))\\
    &= q^{n-3k+r+1} \Delta(3k-r-1) - \sum_{t=3k-r}^n q^{n-t} \Delta(t-k) \\
    &+ \sum_{t=3k-r}^n q^{n-t} \sum_{j=1}^s \left( b_j H(t-k+j)-b'_j H'(t-k+j) \right).
\end{split}
\end{align}
Note that
\begin{align*}
    &\sum_{t=3k-r}^n q^{n-t} \sum_{j=1}^s \left( b_j H(t-k+j)-b'_j H'(t-k+j) \right)\\
    &=\sum_{t=3k-r}^n \left( q^{n-t} H(t-k+r) -\sum_{j=1}^{r-1} q^{n-t} H'(t-k+j) \right)\\
    &+ \sum_{j=r+1}^s b_j \sum_{t=3k-r}^n q^{n-t} \left(H(t-k+j)-H'(t-k+j) \right)\\
    &\le \sum_{t=3k-r}^n q^{n-t} H(t-k+r)+ \sum_{j=r+1}^s b_j \sum_{t=3k-r}^n q^{n-t} \left(H(t-k+j)-H'(t-k+j) \right).
\end{align*}
For $r < j \le s$ and $3k-r \le t \le n$ one has $2k < t-k+j < 3k-r$, and again using (\ref{LP2 -1}) we have
\begin{align*}
    &\sum_{j=r+1}^s b_j \sum_{t=3k-r}^n q^{n-t} \left(H(t-k+j)-H'(t-k+j) \right)\\
    &= -\sum_{j=r+1}^s b_j \sum_{t=3k-r}^n q^{n-t} \left( \delta(t-k+j)-\delta(t-2k+j+s) \right)\\
    &\le 2(r-1) \sum_{j=r+1}^s b_j q^{n-3k+r+1} + 2 \sum_{j=r+1}^s b_j q^{n-5k+2s+r+j}\\
    &\le 2(r-1) q^{n-3k+s} + q^{n-5k+3s+r+2} \le q^{n-3k+s+r-1} + q^{n-4k+2s}\\
    &\le q^{n-3k+s+r-1} + q^{n-3k+s-3}, 
\end{align*}
where we have applied the inequalities $r+s \le k-2$ and $s \le k-3$.  Inequality (\ref{LP2 1}) thus becomes
\begin{align} \label{up to 3k}
\begin{split}
    &\Delta(n) = q^{n-3k+r+1} \Delta(3k-r-1) - \sum_{t=3k-r}^n q^{n-t} (H(t)-H'(t))\\
    &\le q^{n-3k+r+1} \Delta(3k-r-1) - \sum_{t=3k-r}^n q^{n-t} \Delta(t-k)+ q^{n-3k}((r+1)q^r + q^{s+r-1}+q^{s-3}\\
    &\le -2q^{n-2k}+2q^{n-3k+r+s}+\sum_{t=3k-r}^n q^{n-t} + q^{n-3k}(q^{2r}+q^{s+r-1}+q^{s-3})\\
    &\le -2q^{n-2k}+q^{n-3k}(q^{s+r+1}+q^{r+1}+q^{2r}+q^{s+r-1}+q^{s-3})\\ 
    &\le -q^{n-2k},
\end{split}
\end{align}
where we have used the inequality $\Delta(3k-r-1) \le -2\left( q^{k-r-1}-q^{s-1} \right)$ and the fact that $\Delta(n)=-1$ for $2k-r \le n \le 2k$.\\

Let $3k < n < 4k$.  Denote $I_t = I \cap \{t, \dots, s\}$.  One has
\begin{align*}
    &-\sum_{t=3k+1}^n q^{n-t} (H(t)-H'(t)) = -\sum_{t=3k+1}^n q^{n-t} \Delta(t-k) + \sum_{t=3k+1}^n q^{n-t} \sum_{j \in I_{r+1}} \Delta(t-2k+j) \\
    &- \sum_{t=3k+1}^n q^{n-t} \sum_{j \in I_{r+1}} \sum_{l=1}^{k-j} \left( b_l H(t-2k+j+l)-b'_l H'(t-2k+j+l) \right)\\
    &+\sum_{t=3k+1}^n q^{n-t} H(t-k+r)-\sum_{j=1}^{r-1} \sum_{t=3k+1}^n q^{n-t} H'(t-k+j),
\end{align*}
hence 
\begin{align} \label{last case}
\begin{split}
    &-\sum_{t=3k+1}^n q^{n-t} (H(t)-H'(t)) \le -\sum_{t=3k+1}^n q^{n-t} \Delta(t-k)+\sum_{t=3k+1}^n q^{n-t} H(t-k+r)\\
    &- \sum_{t=3k+1}^n q^{n-t} \sum_{j \in I_{r+1}} \sum_{l=1}^{k-j} \left( b_l H(t-2k+j+l)-b'_l H'(t-2k+j+l) \right).
\end{split}
\end{align}

Let $n<4k-r$.  Note that $t-2k+j+l < 3k-r$.  For fixed $j$, if $r<l \le k-j$ one has
\begin{align*}
    &-\sum_{t=3k+1}^n q^{n-t}  \left( b_l H(t-2k+j+l)-b'_l H'(t-2k+j+l) \right)\\
    &= -b_l \sum_{t=3k+1}^n q^{n-t} \left( H(t-2k+j+l)- H'(t-2k+j+l) \right)\\
    &\le -b_l \sum_{t=3k+1}^{4k-j-l-1} q^{n-t} \left( \tilde{b}_{4k-t-j-l}  - \tilde{b}'_{4k-t-j-l} \right) + b_l \sum_{t=4k-j-l}^n q^{n-t} \delta(t-2k+j+l)\\
    &-b_l \sum_{t=4k-j-l}^n q^{n-t} \delta(t-3k+j+l+s).
\end{align*}
For any $u \le 4k-j-l-1$ observe that
\begin{align*}
    &-\sum_{j \in I_{r+1}} \sum_{l=r+1}^{k-j} b_l \sum_{t=3k+1}^{u} q^{n-t} \left( \tilde{b}_{4k-t-j-l}  - \tilde{b}'_{4k-t-j-l} \right)\\
    &=-\sum_{j \in I_{r+1}} \sum_{l=r+1}^{k-j} b_l \sum_{t=4k-j-l-r}^{u} q^{n-t}  \left( \tilde{b}_{4k-t-j-l}  - \tilde{b}'_{4k-t-j-l} \right)\\
    &=-\sum_{j \in I_{r+1}} \sum_{l=r+1}^{k-j} b_l \sum_{t=4k-j-l-u}^r q^{n-4k+j+l+t} \left( \tilde{b}_t-\tilde{b}'_t \right)  \le 0.
\end{align*}
Similarly, for any $u \le 5k-j-l-r-1$ one has
\begin{align*}
    &-\sum_{j \in I_{r+1}} \sum_{l=1}^{k-j} b_l \sum_{t=4k-j-l}^u q^{n-t} \delta(t-3k+j+l+s) \le 0.
\end{align*}
It follows that
\begin{align} \label{stuff 1}
\begin{split}
    &-\sum_{t=3k+1}^n q^{n-t} \sum_{j \in I_{r+1}} \sum_{l=r+1}^{k-j} \left( b_l H(t-2k+j+l)-b'_l H'(t-2k+j+l) \right)\\
    &\le \sum_{j \in I_{r+1}} \sum_{l=r+1}^{k-j} b_l \sum_{t=4k-j-l}^n q^{n-t} \delta(t-2k+j+l).
\end{split}
\end{align}

Noting that $r < k-s \le k-j$ for any $j \in I$, one has
\begin{align*}
    &-\sum_{t=3k+1}^n q^{n-t} \sum_{j \in I_{r+1}} \sum_{l=1}^r \left( b_l H(t-2k+j+l) - b'_l H'(t-2k+j+l) \right)\\
    &=-\sum_{j \in I_{r+1}} \left( \sum_{t=3k+1}^{4k-j-r-1} q^{n-t} \tilde{b}_{4k-t-j-r} - \sum_{l=1}^{r-1} \sum_{t=3k+1}^{4k-j-l-1} q^{n-t} \tilde{b}'_{4k-t-j-l} \right)\\
    &-\sum_{j \in I_{r+1}} \left( \sum_{t=4k-j-r}^n q^{n-t} H(t-2k+j+r) - \sum_{l=1}^{r-1} \sum_{t=4k-j-l}^n q^{n-t} H'(t-2k+j+l) \right)\\
    &\le \sum_{j \in I_{r+1}} \sum_{l=1}^{r-1} \sum_{t=3k+1}^{4k-j-l-1} q^{n-t} \tilde{b}'_{4k-t-j-l} \\
    &-\sum_{j \in I_{r+1}} \sum_{t=4k-j-r}^n q^{n-t} \left( H(t-2k+j+r)-H'(t-2k+j+r) \right).
\end{align*}
It is easy to see that
\begin{align*}
    &-\sum_{j \in I_{r+1}} \sum_{t=4k-j-r}^n q^{n-t} \left( H(t-2k+j+r)-H'(t-2k+j+r) \right)\\
    &\le \sum_{j \in I_{r+1}} \sum_{t=4k-j-r}^n q^{n-t} \delta(t-2k+j+r).
\end{align*}
We thus have
\begin{align} \label{stuff 2}
\begin{split}
    &-\sum_{t=3k+1}^n q^{n-t} \sum_{j \in I_{r+1}} \sum_{l=1}^r \left( b_l H(t-2k+j+l) - b'_l H'(t-2k+j+l) \right)\\
    &\le \sum_{j \in I_{r+1}} \sum_{l=1}^{r-1} \sum_{t=3k+1}^{4k-j-l-1} q^{n-t} \tilde{b}'_{4k-t-j-l} + \sum_{j \in I_{r+1}} \sum_{t=4k-j-r}^n q^{n-t} \delta(t-2k+j+r).
\end{split}
\end{align}
Using (\ref{stuff 1}) and (\ref{stuff 2}) we have
\begin{align} \label{stuff 3}
\begin{split}
    &-\sum_{t=3k+1}^n \sum_{j \in I_{r+1}} \sum_{l=1}^{k-j} q^{n-t} \left( b_l H(t-2k+j+l)-b'_l H'(t-2k+j+l) \right)\\
    &\le \sum_{j \in I_{r+1}} \sum_{l=r}^{k-j} b_l \sum_{t=5k-j-l-s-r}^n q^{n-t} \delta(t-2k+j+l) + \sum_j \sum_{l=1}^{r-1} \sum_{t=4k-j-l-s}^{4k-j-l-1} q^{n-t} \tilde{b}'_{4k-t-j-l}\\
    &\le 2 \sum_j \sum_{l=1}^{k-j} q^{n-5k+s+r+j+l} + \sum_j \sum_{l=1}^{r-1} q^{n-4k+s+j+l+1}\\
    &\le 2 \sum_j  q^{n-4k+s+r+1} + \sum_j q^{n-4k+s+j+r+1} \le q^{k-s-r-1} q^{n-4k+s+r+1}+q^{n-4k+2s+r+2}\\
    &\le q^{n-3k}+q^{n-3k+s},
\end{split}
\end{align}
using the fact that $2|I_{r+1}| \le 2(k-s-r-1) \le q^{k-s-r-1}$.  

For $2k < n < 3k-r$, using equality (\ref{Delta < 3k-r}) we easily obtain the lower bound
\begin{align*}
    \Delta(n) &= -q^{n-2k}\Delta(2k) + \sum_{t=2k}^n q^{n-t} \delta(t) - \sum_{t=2k}^{n-k+s} q^{n-t-k+s} \delta(t)\\
    &\ge -2q^{n-2k} + \sum_{t=n-k+s+1}^n q^{n-t} \delta(t).
\end{align*} 
If $3k-s-r > n-k+s$ then $\sum_{t=n-k+s+1}^n q^{n-t} \delta(t) \ge 0$.  If $3k-s-r \le n-k+s$ then $\sum_{t=2k}^n q^{n-t} \delta(t) \ge 0$ and $-\sum_{t=2k}^{n-k+s} q^{n-t-k+s} \delta(t) \ge -2q^{n-4k+2s+r}$.  We thus obtain the lower bound 
\begin{align} \label{Delta < 2k}
    \Delta(n) \ge -2q^{n-2k}-2q^{n-4k+2s+r}.
\end{align}
Applying (\ref{stuff 3}) and (\ref{Delta < 2k}) to (\ref{last case}) one has
\begin{align*}
    &-\sum_{t=3k+1}^n q^{n-t} (H(t)-H'(t)) \le -\sum_{t=3k+1}^n q^{n-t} \Delta(t-k)+\sum_{t=3k+1}^n q^{n-3k+r}+q^{n-3k}+q^{n-3k+s}\\
    &\le 2\sum_{t=3k+1}^n q^{n-3k} +2\sum_{t=3k+1}^nq^{n-5k+2s+r}+ (k-r-1) q^{n-3k+r} +q^{n-3k}+q^{n-3k+s}\\
    &\le 2(k-r-1)q^{n-3k}+2(k-r-1)q^{n-5k+2s+r} + q^{n-2k-2} + q^{n-3k}+q^{n-3k+s}\\
    &\le q^{n-2k-r-1}+q^{n-4k+2s-1} + q^{n-2k-2} + q^{n-3k}+q^{n-3k+s}\\
    &\le 2q^{n-2k-2} + q^{n-2k-7}+q^{n-3k}+q^{n-2k-3}.
\end{align*}
One thus has
\begin{align*}
    \Delta(n) \le q^{n-2k}(-1+q^{-1}+q^{-7}+q^{-k}+q^{-3}) \le 0.
\end{align*}

Let $4k-r \le n < 4k$.  By calculations similar to those above, one has
\begin{align*}
    &-\sum_{j \in I_{r+1}} \sum_{l=1}^{k-j} \sum_{t=3k+1}^n q^{n-t}  \left( b_l H(t-2k+j+l)-b'_l H'(t-2k+j+l) \right)\\
    &\le \sum_{j \in I_{r+1}} \sum_{l=1}^{r-1} \sum_{t=3k+1}^{4k-j-l-1} q^{n-t} \tilde{b}'_{4k-t-j-l} + \sum_{j \in I_{r+1}} \sum_{l=r}^{k-j} b_l \sum_{t=4k-j-l}^{5k-r-j-l-1} q^{n-t} \delta(t-2k+j+l)\\
    &-\sum_{j \in I_{r+1}} \sum_{l=r}^{k-j} b_l \sum_{t=5k-r-j-l}^n q^{n-t} \left( H(t-2k+j+l)-H'(t-2k+j+l) \right).
\end{align*}
For $3k-r \le n < 3k$ observe that
\begin{align*}
    &H(n)-H'(n) \ge \Delta(n-k)-H(n-k+r)+\sum_{t=r+1}^s ( \delta(n-k+t)-\delta(n-2k+t+s) )\\
    &\ge -2 - q^{n-3k+r} + \sum_{t=r+1}^s (\delta(n-k+t)- \delta(n-2k+t+s)).
\end{align*}
It follows that
\begin{align} \label{INE 1}
\begin{split}
    &-\sum_{j \in I_{r+1}} \sum_{l=r}^{k-j} b_l \sum_{t=5k-r-j-l}^n q^{n-t} \left( H(t-2k+j+l)-H'(t-2k+j+l \right)\\
    &\le \sum_{j \in I_{r+1}} \sum_{l=r}^{k-j} b_l \left( q^{n-5k+r+j+l+2}+(n-5k+r+j+l+1)q^{n-5k+j+l+r} \right.\\
    &\left. +2(r-1)q^{n-5k+r+j+l+2}+2q^{n-7k+3s+j+l+r+1} \right)\\
    &\le q^{n-3k-s+1}+q^{n-3k+r-s-2}+q^{n-3k+r-s-1}+q^{n-5k+2s+1} \le q^{n-3k}.
\end{split}
\end{align}
One has
\begin{align} \label{INE 2}
\begin{split}
    &\sum_{j \in I_{r+1}} \sum_{l=r}^{k-j} b_l \sum_{t=4k-j-l}^{5k-r-j-l-1} q^{n-t} \delta(t-2k+j+l) \le 2\sum_{j \in I_{r+1}} \sum_{l=1}^{k-j} b_l q^{n-5k+j+l+s+r}\\
    & \le 2(k-s-r-1) q^{n-4k+s+r+1} \le q^{n-3k}
\end{split}
\end{align}
and
\begin{align} \label{INE 3}
\begin{split}
    &\sum_{j \in I_{r+1}} \sum_{l=1}^{r-1} \sum_{t=3k+1}^{4k-j-l-1} q^{n-t} \tilde{b}'_{4k-t-j-l} \le \sum_{j \in I_{r+1}} \sum_{l=1}^{r-1} \sum_{t=4k-s-j-l}^{4k-j-l-1} q^{n-t} b'_{4k-t-j-l}\\
    &\le \sum_j \sum_{l=1}^{r-1} q^{n-4k+s+j+l+1} \le \sum_{j \in I_{r+1}} q^{n-4k+s+j+r+1} \le q^{n-4k+2s+r+2} \le q^{n-3k+s}.
\end{split}
\end{align}
Finally, for $3k-r \le n < 3k$, using (\ref{LP2 1}) and (\ref{Delta < 2k}) one has the lower bound
\begin{align} \label{INE 4}
\begin{split}
    &\Delta(n) \ge -2 (q^{n-2k}+q^{n-4k+2s+r})\\
    &+ \sum_{t=3k-r}^n q^{n-t}\left( H(t-k+r) - \sum_{l=1}^{r-1} H'(t-k+l) - \sum_{l=r+1}^s b_l \delta(t-k+l) \right)\\
    &\ge -2 (q^{n-2k}+q^{n-4k+2s+r}) - \sum_{l=1}^{r-1} \sum_{t=3k-r}^{3k-l-1} q^{n-t} \tilde{b}'_{3k-t-l} - \sum_{l=r}^s \sum_{t=3k-r}^n b_l q^{n-t} \delta(t-k+l)\\
    &\ge -2 (q^{n-2k}+q^{n-4k+2s+r}) - \sum_{l=1}^{r-1} q^{n-3k+r+1} - 2\sum_{l=r}^s q^{n-4k+s+r+l}\\
    &\ge -2 (q^{n-2k}+q^{n-4k+2s+r}) - q^{n-3k+2r-1} - q^{n-4k+2s+r+2}.
\end{split}
\end{align}

Applying (\ref{INE 1}) through (\ref{INE 4}) to (\ref{last case}), for $4k-r \le n < 4k$ one thus has
\begin{align*}
    &\Delta(n) \le -q^{n-2k} + 2 \sum_{t=3k+1}^n (q^{n-3k} + q^{n-5k+2s+r}) + \sum_{t=4k-r}^n (q^{n-4k+2r-1} + q^{n-5k+2s+r+2})\\
    &+\sum_{t=3k+1}^n q^{n-t} H(t-k+r)+2q^{n-3k}+q^{n-3k+s}\\
    &\le -q^{n-2k} + 2(k-1)(q^{n-3k}+q^{n-4k+s-2})+(r-1)(q^{n-3k-5}+q^{n-4k+s})\\
    &+(k-1)q^{n-3k+r} + 2q^{n-3k}+q^{n-3k+s}\\
    &\le q^{n-2k}(-1+q^{-2}+q^{-7}+q^{-11}+q^{-k-4}+q^{-1}+2q^{-k}+q^{-3}) < 0
\end{align*}
where we have used (\ref{LP 1.5}) and the inequalities $k \ge 5$, $(k-1) \le q^{k-3}$, and $k-1 \le q^{k/2}$.

It follows that $\tilde{\Delta}(n) < 0$ for $n < 4k$.  In combination with our results from Section 6, one has
\begin{align*} 
    &\Delta(n)
    < 0 \textrm{ for } n<4k \textrm{ when } s=s' \textrm{ and } k=k'.
\end{align*}

\section*{Acknowledgments}

This work was partially supported by the NSF grant DMS-1600568.

\section*{References}


\begin{thebibliography}{99}
\bibitem{AB}
Afraimovich V. S. and Bunimovich L. A. 2010 Which hole is leaking the most: a topological approach to study open systems \textit{Nonlinearity} \textbf{23} 643-56

\bibitem{BD}
Bunimovich L.A. and Dettmann C.P. 2007 Peeping at chaos: Nondestructive monitoring of chaotic systems by measuring long-time escape rates \textit{EPL} \textbf{80} 40001

\bibitem{BP}
 Barreira L. and Pesin Ya. 2007 Nonuniform Hyperbolicity: Dynamics of Systems with Nonzero Lyapunov Exponents. Cambridge Univ. Press
 
\bibitem{B} 
Bunimovich L. A. 2012 Fair dice like dynamical systems \textit{Contemporary Math.} \textbf{567} 78-89

\bibitem{BB}
Bakhtin Yu. and Bunimovich L. A. 2012 The optimal sink and the best source in a Markov chain \textit{J. Stat. Phys.} \textbf{143} 943-54
 
\bibitem{BY} 
Bunimovich L. A. and Yurchenko A. 2011 Where to place a hole to achieve maximal escape rate \textit{Isr. J. Math.} \textbf{182} 229-52

\bibitem{BV}
Bunimovich L. A. and Vela-Arevalo L. 2015 Some new surprises in chaos \textit{Chaos} \textbf{25} 0976141-11

\bibitem{DY}
Demers M. and Young L. S. 2006 Escape rates and conditionally invariant measures \textit{Nonlinearity} \textbf{19} 377-97

\bibitem{DW}
Demers M., Wright P. 2012 Behavior of escape rate function in hyperbolic dynamical systems \textit{Nonlinearity} \text{25} 2133-2149

\bibitem{D}
Dettmann C.P. 2013 Open circle maps: Small hole asymptotics \textit{Nonlinearity} \textbf{26} 307-317 

\bibitem{E}
Eriksson K. 1997 Autocorrelation and the enumeration of strings avoiding a fixed string \textit{Comb. Prob. and Comp.} \textbf{6} 45-8

\bibitem{FP}
Ferguson A. and Pollicott M. 2012 Escape rates for Gibbs measures \textit{Ergodic Theory \& Dyn. Systems} \textbf{32} 961-988

\bibitem{FKCD}
Friedman, N., Kaplan, A., Carasso, D., and Davidson, N. 2001 Observation of chaotic and regular dynamics in atom-optics billiards \textit{Phys. Rev. Lett} \textbf{86} 1518-1521

\bibitem{GO}
Guibas, L. J. and Odlyzko A. M. 1981 String overlaps, pattern matching, and nontransitive games \textit{J. Comb. Theorey Ser. A} \textbf{30} 183-200

\bibitem{KL}
Keller G., Liverani C. 2009 Rare events, escape rates and quasistationarity: some exact formulae \textit{J. Stat. Phys.} \textbf{135} 519-534 

\bibitem{M}
Mansson M. 2002 Pattern avoidance and overlap in strings \textit{Comb. Prob. and Comp.} \textbf{11} 393-402

\bibitem{MHCR}
Milner V., Hanssen J. L., Campbell W. C., and Raizen M. 2001 Optical billiards for atoms \textit{Phys. Rev. Lett.} \textbf{86} 1514-1517

\bibitem{PY}
Pianigiani G., Yorke J. 1979 Expanding maps in sets which are almost invariant:decay and chaos \textit{Trans A.M.S.} \textbf{252} 351-366

\bibitem{LY}
Young, L. S. 1998 Statistical properties of dynamical systems with some hyperbolicity. \textit{Annals of Mathematics} \textbf{147(3)} 585-650

\end{thebibliography}
\end{document}